\def\namedlabel#1#2{\begingroup
   \def\@currentlabel{#2}%
   \label{#1}\endgroup
}
\theoremstyle{plain}
\newtheorem{theorem}{Theorem}[section]
\newtheorem{proposition}[theorem]{Proposition}
\newtheorem{lemma}[theorem]{Lemma}
\newtheorem{corollary}[theorem]{Corollary}
\theoremstyle{definition}
\newtheorem{definition}[theorem]{Definition}
\newtheorem{example}[theorem]{Example}
\theoremstyle{remark}
\newtheorem{remark}[theorem]{Remark}
\numberwithin{equation}{section}
\newcommand{\R}{{\mathbb R}}
\newcommand{\Z}{{\mathbb Z}}
\newcommand{\N}{{\mathbb{N}}}
\newcommand{\scrJ}{{\mathscr{J}}}
\newcommand{\scrM}{{\mathscr{M}}}
\newcommand{\scrL}{{\mathscr{L}}}
\newcommand{\scrH}{{\mathscr{H}}}
\newcommand{\scrO}{{\mathscr{O}}}
\newcommand{\cA}{{\mathcal{A}}}
\newcommand{\cJ}{{\mathcal{J}}}
\newcommand{\id}{{\mathrm{id}}}
\newcommand{\crit}{{\mathrm{Crit}}}
\newcommand{\critval}{{\mathrm{CritVal}}}
\newcommand{\Ker}{{\operatorname{Ker}}}
\newcommand{\dist}{{\operatorname{dist}}}
\newcommand{\ev}{{\operatorname{ev}}}
\renewcommand{\baselinestretch}{1.5}
\begin{document}
\title[RFH for Tentacular Hamiltonians]{Rabinowitz Floer Homology for Tentacular Hamiltonians}


\author{F. Pasquotto}
\address{Department of Mathematics, Vrije Universiteit Amsterdam, the Netherlands}
\curraddr{}
\email{f.pasquotto@vu.nl}
\thanks{}

\author{R. Vandervorst}
\address{Department of Mathematics, Vrije Universiteit Amsterdam, the Netherlands.}
\curraddr{}
\email{r.c.a.m.vander.vorst@vu.nl}
\thanks{}

\author{J. Wi\'{s}niewska}
\address{Department of Mathematics, Eidgen\"{o}ssische Technische Hochschule Z\"{u}rich, Switzerland.}
\curraddr{}
\email{jagna.wisniewska@math.ethz.ch}
\thanks{}

\subjclass[2010]{53D40; 57R17; 37J50; 70H05}

\keywords{Rabinowitz Floer homology, Morse-Bott action functional, non-compact hypersurfaces, closed characteristics}

\date{\today}

\dedicatory{}

\begin{abstract}
This paper extends the definition of Rabinowitz Floer homology to non-compact 
hypersurfaces. 
We present a general framework for the construction of Rabinowitz Floer homology in the non-compact setting under suitable compactness assumptions on the periodic orbits and the moduli spaces of Floer trajectories. We introduce a class of hypersurfaces arising as the level sets of specific Hamiltonians: strongly tentacular Hamiltonians for which the compactness conditions are satisfied, cf. \cite{pasquotto2017}, thus enabling us to define the Rabinowitz Floer homology for this class.
Rabinowitz Floer homology in turn serves as a tool to address the Weinstein conjecture and establish existence of closed characteristics for non-compact contact manifolds.
\end{abstract}
\maketitle
\section{Introduction}
Rabinowitz Floer homology was defined in \cite{CieliebakFrauenfelder2009} as an invariant of exact contact hypersurfaces in exact convex symplectic manifolds. 
This paper represents the first step towards extending the definition of Rabinowitz Floer homology to include non-compact hypersurfaces.
It is the homology of a complex generated by the critical points of the Rabinowitz action functional and, by construction, it reduces to the singular homology of the hypersurface if the latter does not carry any closed characteristic. 
Because of this property, it is a suitable tool for studying the question of existence of closed characteristics on contact type hypersurfaces, a question known as the (non-compact) Weinstein Conjecture. 
The  Weinstein conjecture for compact contact manifolds has a rich history in symplectic topology: it has been the driving force behind many important developments in symplectic topology, most notably the study of pseudo-holomorphic curves in symplectizations of contact manifolds.
The majority of the literature on the Weinstein Conjecture is devoted to the case of compact hypersurfaces. 
The approach in this paper applies to both compact and non-compact hypersurfaces.
In the case of non-compact hypersurfaces the problem is much harder and additional topological and geometrical conditions are needed as the partial results in \cite{BPV1} suggest. 
The goal of the current paper is to show that Rabinowitz Floer homology can be defined for a class of non-compact hypersurfaces in the standard symplectic space $(\mathbb{R}^{2n},\omega_0)$. 
Some of the results in this  work are presented in a more general setting so as to serve as a first step towards a more general Floer theoretic treatment of the non-compact Weinstein conjecture.
The present  work builds on  previous results concerning $L^{\infty}$-bounds for moduli spaces of Floer trajectories corresponding to a class of Hamiltonians with non-compact regular level sets, cf.\ \cite{pasquotto2017}. In \cite{pasquotto2017} such Hamiltonians are referred to as \emph{tentacular Hamiltonians}: in fact, in order for the Rabinowitz Floer homology to be well-defined, we need to restrict to Hamiltonians in a subclass, which will be referred to as \emph{strongly tentacular Hamiltonians} and which will be defined below. 

A continuous function $F$ is called \emph{coercive} if all its sublevel sets $F^{-1}((-\infty,a]),\, a\in\mathbb{R}$ are compact. The Poisson bracket of two functions $F$ and $H$ on a symplectic manifold $(M,\omega)$ is defined as $\{F,H\}:=\omega(X_{F},X_H)$, where $X_{F},X_H$ are Hamiltonian vector fields associated to $H$ and $F$ respectively, i.e. $dH=\omega(\ \cdot\ , X_H)$ and $dF=\omega(\ \cdot\ , X_{F})$. A vector field $X$ on a symplectic manifold $(M,\omega)$ is called a  {Liouville vector field} if $d\iota_X \omega = \omega$. A Liouville vector field $X$ on the standard symplectic space $(\mathbb{R}^{2n},\omega_0)$  is called \emph{asymptotically regular} if  $\Vert DX(x)\Vert \le c$ for some positive constant $c$ and all $x\in\R^{2n}$. The asymptotically regular Liouville vector fields on $\R^{2n}$ are denoted by $\scrL(\R^{2n})$.

\renewcommand{\baselinestretch}{1.2}

\begin{restatable}{definition}{tentH}
\label{tentH}
Let $H\colon \R^{2n} \to \R$ be a smooth Hamiltonian with regular level set $\Sigma=H^{-1}(0)$. Then $H$ is called \emph{strongly tentacular} if the following axioms are satisfied:
\begin{enumerate}
\item[(h1)\namedlabel{h1}{(h1)}] there exist a vector field $X^\dagger\in \scrL(\R^{2n})$ and constants $c,c'>0$,
such that $dH(X^\dagger)(x) \ge c|x|^2 - c'$, for all $x\in \R^{2n}$;
\item[(h2)\namedlabel{h2}{(h2)}] (sub-quadratic growth) $\sup_{x\in \R^{2n}} \Vert D^3 H(x)\Vert \cdot |x| <\infty$;
\item [(h3)\namedlabel{h3}{(h3)}] (restricted contact type) there exist a vector field $X^\ddagger\in \scrL(\R^{2n})$, 
such that $dH(X^\ddagger)(x) >0$, for all $x\in \Sigma$;
\item[(h4)\namedlabel{h4}{(h4)}] there exists a coercive function $F$, defined in a neighborhood of $\Sigma$, such that for all $x\in \Sigma$ either $\{H,F\}(x)\neq 0$ or $\{H,\{H,F\}\}(x)>0,$ as $|x|\to\infty$.
\end{enumerate}
\end{restatable}

\begin{remark}
\label{tentextra}
Hamiltonians for which  Condition \ref{h4} is relaxed by omitting the coercivity condition on $F$ are called
\emph{tentacular}. This class is used in \cite{pasquotto2017} to establish $L^\infty$-bounds.
Hamiltonians which satisfy axioms (h1)-(h3) are called \emph{admissible}.
\end{remark}
\begin{remark}
Strongly tentacular Hamiltonians include a variety of quadratic functions with hyperboloids as their $0$-level sets. Their compact perturbations are also strongly tentacular, provided the contact type property is preserved. 

The Hamiltonian 
$$
H(q_1,q_2,p_1,p_2):=\frac{1}{2}\left(p_1^2+p_2^2+q_1^2-q_2^2-1\right),
$$
on $(\R^4,\omega_0)$ is an example of a strongly tentacular Hamiltonian with non-compact $0$-level set.
These type of examples are also considered in \cite{CieliebakEliashbergPolterovich2017}, where the authors compute the symplectic homology of their sublevel sets.
\end{remark}

One can immediately notice that if we restrict ourselves to asymptotically regular vector fields, then hypothesis (h1), (h2), and (h4) are identical to (H1), (H2), and (H4), respectively, introduced in \cite{Wisniewska2017}. We will show in Section \ref{sec:tent} that the conditions in Definition \ref{tentH} also imply hypothesis (H3) from \cite{Wisniewska2017}.
Conditions (h1)-(h3) in the definition of tentacular Hamiltonians, together with suitable a priori bounds on the set of non-degenerate periodic orbits contained in a fixed action window guarantee the appropriate bounds for the solutions of the Rabinowitz Floer equations, based on the estimates in \cite{pasquotto2017}. In order to guarantee that the action functional is of Morse-Bott type, condition \ref{h4} is employed to ensure a priori bounds on all the non-degenerate periodic orbits (also under perturbations). Rabinowitz Floer homology can be defined generically for tentacular Hamiltonians.
For strongly tentacular Hamiltonians, we show that the definition is stable under suitable perturbations.

The main result of this paper is the definition of Rabinowitz Floer homology for strongly tentacular Hamiltonians:
\begin{restatable}{theorem}{tentRFH}
Rabinowitz Floer homology of strongly tentacular Hamiltonians is well-defined. Moreover, if $\{H_s\}$ is a one-parameter family of tentacular Hamiltonians in the affine space of compactly supported perturbations of a given Hamiltonian $H$, then Rabinowitz Floer homology is constant along $\{H_s\}$.
\label{thm:tentRFH}
\end{restatable}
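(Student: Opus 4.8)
\emph{Step 1: the action functional is Morse--Bott.} The plan is to work throughout with the Rabinowitz action functional $\cA^H(v,\eta)=\int_0^1 v^*\lambda_0-\eta\int_0^1 H(v(t))\,dt$ on $C^\infty(S^1,\R^{2n})\times\R$, whose critical points are the constant loops on $\Sigma$ (with $\eta=0$) together with the closed characteristics of $\Sigma$ (with $\eta$ the period), and to use the estimates of \cite{pasquotto2017} and \cite{Wisniewska2017} as black boxes. First I would verify that (h1)--(h3) imply hypothesis (H3) of \cite{Wisniewska2017}, so that the full set (H1)--(H4) of loc.\ cit.\ is available. The role of (h4) is to provide an a priori bound --- uniform also under compactly supported perturbations preserving (h1)--(h4) --- on closed characteristics with action in a fixed window: along an orbit $v$ with $\dot v=\eta X_H(v)$ the quantity $\tfrac{d}{dt}F(v(t))$ is proportional to $\{H,F\}(v(t))$ and $\tfrac{d^2}{dt^2}F(v(t))$ to $\{H,\{H,F\}\}(v(t))$, so if $F\circ v$ attained an interior maximum at a point of large norm one would have $\{H,F\}=0$ and $\{H,\{H,F\}\}\le 0$ there, contradicting (h4); thus $F\circ v$ is bounded above by a constant depending only on the (h4)-data, and coercivity of $F$ confines $v$ to a fixed compact set, while (h3) bounds the period $\eta$ in terms of the action. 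Elliptic bootstrapping and the Arzel\`a--Ascoli theorem then make the set of closed characteristics in each action window compact, so after a $C^\infty$-small compactly supported perturbation of $H$ they become finite and nondegenerate and $\cA^H$ is of Morse--Bott type; the constant locus $\Sigma\times\{0\}$ is automatically Morse--Bott, and since $\Sigma$ is non-compact one proceeds filtration-wise, fixing an auxiliary Morse function on $\Sigma$.

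\emph{Step 2: compactness of the moduli spaces.} Next I would establish compactness of the spaces of negative gradient flow lines of $\cA^H$. The main external input is the $L^\infty$-estimate of \cite{pasquotto2017} for tentacular Hamiltonians: Floer trajectories with asymptotics in a fixed action window remain in a fixed compact subset of $\R^{2n}$, with a bound depending only on the asymptotic data $(X^\dagger,X^\ddagger,F,c,c')$ and on the closed-orbit bound from Step~1. To this I would add the standard uniform bound on the Lagrange multiplier $\eta$ along a trajectory (an energy estimate together with the positivity supplied by (h1)) and note the absence of bubbling, $(\R^{2n},\omega_0)$ being exact and aspherical. Gromov--Floer compactness then yields compactness up to breaking of all the moduli spaces $\mathcal{M}(c_-,c_+)/\R$.

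\emph{Step 3: transversality and the definition.} With Steps~1--2 in hand I would achieve transversality by a generic choice of admissible almost complex structure, together if needed with a further compactly supported perturbation of $H$, always remaining inside the tentacular class so that nothing proved above is lost. The Rabinowitz Floer chain complex is then defined by the Morse--Bott-with-cascades construction of \cite{CieliebakFrauenfelder2009}, filtered by the action; $\partial^2=0$ follows from gluing together with the compactness of the one-dimensional components. This defines the Rabinowitz Floer homology $\mathrm{RFH}(H)$ of a strongly tentacular Hamiltonian $H$.

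\emph{Step 4: invariance under compactly supported homotopies.} For a one-parameter family $\{H_s\}$ of tentacular Hamiltonians differing from a fixed $H$ by compactly supported functions I would build continuation chain maps from the $s$-dependent Floer equation and conclude $\mathrm{RFH}(H_0)\cong\mathrm{RFH}(H_1)$ by the usual argument: continuation is independent of the chosen homotopy up to chain homotopy, and the two directions compose to the identity up to chain homotopy. The delicate point --- and the main obstacle --- is that every a priori bound invoked in Steps~1--2 must hold uniformly in $s$. This is exactly where compact support of the perturbations is essential: outside a fixed compact set all the $H_s$ coincide with $H$ and hence satisfy (h1)--(h4) with the same vector fields and constants, so the $L^\infty$-estimate of \cite{pasquotto2017} and the closed-orbit bound of Step~1 are uniform along the family. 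Carrying this uniformity through for the $s$-dependent trajectories and Lagrange multipliers, and doing the bookkeeping needed to handle the non-compact constant locus $\Sigma\times\{0\}$ consistently across the homotopy, is the technical heart of the argument.
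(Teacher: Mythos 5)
Your proposal follows essentially the same route as the paper: derive $(\mathrm{H}3)$ of \cite{Wisniewska2017} from \ref{h1}--\ref{h3}, use \ref{h4} to confine closed orbits (and hence obtain uniform continuity of \ref{PO+} and genericity of \ref{MB} under compact perturbations), import the $L^\infty$-bounds of \cite{pasquotto2017} for the Floer trajectories, define the complex via Morse--Bott-with-cascades over a coercive Morse function on $\Sigma$, and establish invariance via continuation maps with uniform-in-$s$ estimates. The one point you compress that the paper treats carefully is the confinement of the \emph{Morse} (cascade) pieces on the non-compact constant stratum $\Sigma_0$: the paper introduces the ``shade'' sets $\tilde K(b)$ and $K(b)$ (Lemma \ref{compact_shade}) precisely to convert the $L^\infty$-bounds on Floer trajectories plus coercivity of $f$ into compactness of the Morse parts, which is the step your phrase ``bookkeeping for $\Sigma\times\{0\}$'' gestures at but does not spell out.
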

The set of strongly tentacular Hamiltonians is not invariant under the action of the group of symplectomorphisms, and yet the conditions sufficient to define  Rabinowitz Floer homology for non-compact hypersurfaces are invariant under symplectomorphisms. As a result, we can extend the definition of Rabinowitz Floer homology to the image of the set of the strongly tentacular Hamiltonians under the action of the group of symplectomorphisms:
\begin{restatable}{corollary}{SympTentRFH}
\label{cor:SympTentRFH}
For every strongly tentacular Hamiltonian $H$ and every symplectomorphism $\varphi$, the Rabinowitz Floer homology of $H \circ \varphi$ is well-defined and isomorphic to the Rabinowitz Floer homology of $H$.
\end{restatable}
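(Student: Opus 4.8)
The plan is to prove the corollary by naturality: an arbitrary symplectomorphism $\varphi$ of $(\R^{2n},\omega_0)$ intertwines the Rabinowitz action functional of $H\circ\varphi$ with that of $H$, and since the hypotheses of the general framework are phrased in symplectomorphism-invariant terms, both functionals define Rabinowitz Floer homologies and the intertwining map induces an isomorphism between them. Write $\Sigma=H^{-1}(0)$ and $\Sigma':=(H\circ\varphi)^{-1}(0)=\varphi^{-1}(\Sigma)$; since $\varphi$ is a diffeomorphism, $0$ is a regular value of $H\circ\varphi$ and $\Sigma'$ is a (possibly non-compact) hypersurface mapped onto $\Sigma$ by $\varphi$. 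First I would record the transformation law. For a loop $v\in C^\infty(S^1,\R^{2n})$ and $\eta\in\R$ put $\Phi(v,\eta):=(\varphi\circ v,\eta)$. Because $\varphi^*\omega_0=\omega_0$ on the simply connected space $\R^{2n}$, the one-form $\varphi^*\lambda_0-\lambda_0$ (with $\lambda_0$ a primitive of $\omega_0$) is closed, hence exact, so it integrates to zero over any loop; therefore $\int_{S^1}(\varphi\circ v)^*\lambda_0=\int_{S^1}v^*\lambda_0$ and
\[
\cA^{H}\bigl(\Phi(v,\eta)\bigr)=\cA^{H\circ\varphi}(v,\eta).
\]
Thus $\Phi$ is a bijection of the relevant spaces of pairs intertwining the two action functionals, so it induces a bijection of critical sets: the constant loops in $\Sigma'$ correspond to the constant loops in $\Sigma$, and, since $\varphi$ conjugates $X_{H\circ\varphi}$ to $X_H$, the closed characteristics of $\Sigma'$ correspond --- with the same periods and action values --- to those of $\Sigma$. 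Moreover $\Phi$ commutes with the loop-rotation $S^1$-action and conjugates the linearized flows by $D\varphi$, so the Morse-Bott structure, the gradings, and the decomposition of the critical manifolds are all preserved.

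Next I would transport the remaining Floer data. Given an almost complex structure $J$ adapted to $\cA^{H\circ\varphi}$ (together with whatever auxiliary data --- cut-offs near $\Sigma'$, a Morse function on the critical manifold, coherent orientations --- enter the construction), its pushforward by $\varphi$ produces admissible data for $\cA^H$, and under $\Phi$ the negative gradient flow lines of $\cA^{H\circ\varphi}$ correspond bijectively to those of $\cA^H$; since $\varphi$ is a global diffeomorphism of $\R^{2n}$, the moduli spaces of Floer trajectories on the two sides are identified by a diffeomorphism. Consequently every compactness statement and every $L^\infty$-bound demanded by the general framework --- compactness of the set of periodic orbits in a fixed action window, uniform bounds for Floer cylinders, the analogous statements under compactly supported homotopies --- holds for $H\circ\varphi$ as soon as it holds for $H$ (with, a priori, different constants, which is immaterial). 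This is exactly the point stressed before the statement: although the class of strongly tentacular Hamiltonians is not preserved by $\varphi$, the conditions of the general framework are, so $H\circ\varphi$ falls within its scope. The chain map induced by $\Phi$ is then an isomorphism of Rabinowitz Floer complexes, hence an isomorphism on homology; independence of this isomorphism from the auxiliary choices follows from the same invariance applied to the continuation maps.

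The main obstacle I expect is not conceptual but the need to audit the construction: one must isolate from the concrete treatment of strongly tentacular Hamiltonians the abstract hypotheses of the general framework and verify, item by item, that each is stated intrinsically --- equivalently, that all the structures used to obtain the estimates (the Hamiltonians and Liouville vector fields behind the $L^\infty$-bounds, the coercive function $F$ of axiom \ref{h4}, the cut-offs, the adapted almost complex structures) can be pushed forward by $\varphi$ without spoiling the relevant inequalities. Note that $\varphi$ is allowed to be arbitrarily wild at infinity, so one cannot argue by showing that $H\circ\varphi$ is again strongly tentacular; it is precisely the invariance of the framework's hypotheses, rather than of the class of Hamiltonians, that must carry the proof. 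Once this verification is in place, the isomorphism between the Rabinowitz Floer homology of $H\circ\varphi$ and that of $H$ is essentially formal.
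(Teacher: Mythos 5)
Your proposal takes the same approach as the paper: the intertwining map $\Phi$, the transport of almost complex structures, critical sets, Conley--Zehnder indices, Morse--Bott data, and the identification of moduli spaces that you describe are exactly the content the paper isolates as Proposition~\ref{prop:sympRFHinv}, which is then invoked to prove the corollary. The only step worth making explicit (which the paper does) is the well-definedness of $RFH(H\circ\varphi)$ over the choice of compactly supported perturbation: since $H\circ\varphi$ need not be strongly tentacular, the paper handles this by a chain of isomorphisms $RFH(H\circ\varphi^{-1}+h_1)\cong RFH(H+h_1\circ\varphi)\cong RFH(H+h_2\circ\varphi)\cong RFH(H\circ\varphi^{-1}+h_2)$, transferring the continuation comparison to the tentacular side where Lemma~\ref{lem:invH} applies and back, rather than running it directly on the $H\circ\varphi$-side --- which is what your phrase ``the same invariance applied to the continuation maps'' gestures at but does not spell out.
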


In general, the Morse homology of a non-compact manifold depends on the choice of a Morse function (cf. \cite{kang}). By choosing, for our construction, a coercive Morse function on the non-compact manifold $\Sigma$, we can ensure not only that the Morse homology is well-defined, but that the invariant we define still has the following property: in the absence of periodic orbits of the Hamiltonian vector field on $\Sigma$, the Rabinowitz Floer homology is isomorphic to the 
Morse homology (for coercive Morse functions) of $\Sigma$, as defined in \cite{schwarz1993}, and hence to the singular homology. This property, combined with the invariance under small compactly supported perturbations, makes it into a useful tool to detect periodic orbits. 

\begin{corollary}
Let $H$ be a strongly tentacular Hamiltonian and suppose that its Rabinowitz Floer homology is not isomorphic to the singular homology of the regular level set $\Sigma=H^{-1}(0)$. Then $\Sigma$ carries periodic orbits of the Hamiltonian vector field $X_H$.
\end{corollary}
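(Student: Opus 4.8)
The plan is to argue by contraposition: assume $\Sigma=H^{-1}(0)$ carries no periodic orbits of $X_H$ and deduce that the Rabinowitz Floer homology is isomorphic to the singular homology of $\Sigma$. This is precisely the content of the property highlighted in the paragraph preceding the corollary, so the proof is essentially a matter of invoking the right pieces of the theory developed in the body of the paper. By Theorem~\ref{thm:tentRFH}, the Rabinowitz Floer homology $\mathrm{RFH}(H)$ is well-defined for the strongly tentacular Hamiltonian $H$; the corollary merely reads off a consequence of this construction.

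First I would recall the structure of the Rabinowitz action functional $\mathscr{A}$ and its critical points: a critical point is a pair $(v,\eta)$ where $v$ is a loop on $\Sigma$ and $\eta\in\R$ such that $v$ is a reparametrized closed characteristic of period $\eta$ (when $\eta\neq 0$) or a constant loop on $\Sigma$ (when $\eta=0$). Under the standing assumption that $\Sigma$ carries \emph{no} periodic orbits of $X_H$, the only critical points are the constant loops, i.e. the critical manifold is $\Sigma$ itself (embedded via $x\mapsto(\mathrm{const}_x,0)$), and $\mathscr{A}$ vanishes identically on it. One then chooses the auxiliary Morse data on this critical manifold to be a \emph{coercive} Morse function $f$ on $\Sigma$, as is done throughout the construction; coercivity is exactly the ingredient ensuring that the resulting Morse complex computes a well-defined homology independent of $f$, namely the singular homology of $\Sigma$, by \cite{schwarz1993}.

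Next I would identify the Floer differential. The Rabinowitz Floer differential counts gradient flow lines of $\mathscr{A}$ (with respect to a suitable metric) interpolated with the negative gradient flow of $f$ along the critical manifold. Since $\mathscr{A}\equiv 0$ on the critical manifold and there are no other critical points, an energy/action argument forces every rigid Floer trajectory to be $\mathscr{A}$-constant, hence to be entirely a negative gradient flow line of $f$ on $\Sigma$: the action of a Floer trajectory equals the difference of the $\mathscr{A}$-values of its asymptotic ends, which is zero, and a non-constant Floer cylinder has strictly positive energy unless it degenerates to a flow line within the critical set. Consequently the Rabinowitz Floer complex reduces \emph{as a chain complex} to the Morse complex $(C_*(f),\partial_f)$ of the coercive Morse function $f$ on $\Sigma$, whose homology is $H_*^{\mathrm{sing}}(\Sigma)$.

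The step I expect to be the main (though largely technical) obstacle is justifying that no Floer trajectory can escape to infinity along $\Sigma$ and that the compactified moduli spaces near the critical manifold genuinely have the product form needed to match the Morse differential — in other words, transferring the $L^\infty$-bounds and Morse--Bott transversality/gluing results established for the general construction to this degenerate situation, and verifying that the coercivity of $f$ (which controls the Morse side at infinity) is compatible with those bounds. Once this is in place, combining the chain-level identification above with the invariance statement of Theorem~\ref{thm:tentRFH} yields $\mathrm{RFH}(H)\cong H_*^{\mathrm{sing}}(\Sigma)$. The contrapositive then gives the corollary: if $\mathrm{RFH}(H)\not\cong H_*^{\mathrm{sing}}(\Sigma)$, then $\Sigma$ must carry a periodic orbit of $X_H$.
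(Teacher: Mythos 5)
Your proposal is correct and follows essentially the same route as the paper: argue by contraposition, observe that in the absence of closed characteristics the only critical component of $\cA^H$ is $\Sigma_0$, so the Rabinowitz Floer complex collapses to the Morse complex of the coercive auxiliary function $f$ on $\Sigma$, whose homology is $H_*(\Sigma)$ by \cite{schwarz1993}. The technical obstacle you anticipate (controlling escape to infinity of Floer trajectories) is in fact vacuous here: with no nonconstant critical points there are simply no nontrivial Rabinowitz Floer cylinders, so the hypotheses of Theorem~\ref{thm:defRFH} hold trivially and only coercivity of $f$ is needed, exactly as the paper notes; the only points the paper adds that you omit are the reversal of gradient flow direction (immaterial with $\Z_2$ coefficients) and the degree shift by $n-1$ coming from the signature grading.
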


In order to actually apply this result to the (non-compact) Weinstein conjecture, one needs to be able to compute Rabinowitz Floer homology: this is a highly non-trivial task, even in simple examples. In the compact case one can 
sometimes argue that the Rabinowitz Floer homology vanishes by relying on the concept of \emph{displaceability}: if a hypersurface is displaceable, then its Rabinowitz Floer homology necessarily vanishes. Unfortunately, in the non-compact case, we do not have the notion of displaceability at our disposal, so we need to be able to carry out explicit computations, involving a careful study of Conley-Zehnder indices and connecting flow lines with cascades.
A powerful tool for computing Floer homologies in various contexts, such as regular Floer homology, symplectic homology, contact homology, etc.\, is to have a suitable continuation principle for the homology theory at hand. In this paper we prove that some special homotopies induce isomorphisms of the associated Rabinowitz Floer homology groups, and we use this result to show that the homology is well-defined, cf.\ Section\ \ref{sec:InvRFH}. We strongly believe that Rabinowitz Floer homology allows a more global continuation principle (i.e., generic homotopies should induce isomorphisms of the homology groups) and that this continuation principle can be used to establish the homology in various cases, such as the tentacular hyperboloids and their perturbations. The extension of the continuation principle will be a subject of future research.

The paper is organized as follows: in Section \ref{setting} we describe our setting and recall some definitions. In Sections \ref{MorseBott} to \ref{sec:InvRFH} we discuss the general framework of the definition of non-compact Rabinowitz Floer homology. More precisely, we consider a pair $(H,J)$ consisting of a Hamiltonian function and a compatible almost complex structure on an arbitrary exact symplectic manifold with the following properties: 
\begin{itemize}
\item the closed characteristics on $H^{-1}(0)$ are bounded, i.e. $H$ satisfies property \ref{PO} as introduced in \cite{pasquotto2017};
\item the Rabinowitz action functional associated to $H$ satisfies the Morse-Bott property;
\item the moduli spaces of Floer trajectories between different components of the critical set are bounded.
\end{itemize}
In this setting we show that the Rabinowitz Floer homology $RFH(H,J)$ is well defined. 
If uniform bounds on the moduli spaces of the perturbed Floer trajectories can be obtained, then the Rabinowitz Floer homology is independent of the choice of almost complex structure and invariant under sufficiently small compactly supported perturbations of $H$.

In the last two sections of the paper we concentrate on strongly tentacular Hamiltonians. 
In Section \ref{sec:tent} we present the proof of Theorem \ref{thm:tentRFH}.
Using the bounds obtained in \cite{pasquotto2017} and given that the Morse-Bott condition is generic in a suitable neighborhood of the affine space of compactly supported perturbations of a given strongly tentacular Hamiltonians we define Rabinowitz Floer homology for every Hamiltonian in this class.

Finally, in Section \ref{sec:SympHyper}, we use H\"{o}rmander's symplectic classification of quadratic forms \cite{Hormander1995} to show that a large number of quadratic Hamiltonians are strongly tentacular and thus the associated Rabinowitz Floer homology is well defined. In analogy with the case of positive definite quadratic forms and symplectic ellipsoids, we also introduce the notion of \emph{symplectic hyperboloid} as a sublevel set of a Hamiltonian coming from a quadratic form with at least one positive and one negative eigenvalue. Our hope is that Rabinowitz Floer homology can eventually be applied to obtain a classification of symplectic hyperboloids, similar to the classification of symplectic ellipsoids obtained as an application of symplectic homology in \cite{FloerHoferWysocki}.

\section{Setting and definitions}\label{setting}

We recall the definition of Rabinowitz Floer homology in the setting of hypersurfaces in an exact symplectic manifold $(M,\omega=d\lambda)$, where $\lambda$ is a primitive of $\omega$ and it is called a Liouville form.
In the last part of the paper we will specialize to the case of the standard symplectic manifold $(\R^{2n},\omega)$.
For a (smooth) Hamiltonian function $H\colon M \to \R$ we define the Hamiltonian vector field $X_H$  by the relation
$\iota_{X_H} \omega = -dH$. Assume without loss of generality that $\Sigma=H^{-1}(0)$ is a regular hypersurface.
Periodic solutions of the Hamilton equations $\dot x  = X_H(x)$ are called closed characteristics.
The objective is to find geometric and topological conditions on the hypersurface $\Sigma$ and/or $H$ that guarantee the existence of closed characteristics. 
Central in this approach is the variational formulation of the problem, where the periodic solutions of Hamilton's equations are critical points of an appropriate action functional.
In the variational formulation one can either fix the period $\eta$, in which case the energy of the Hamiltonian is undetermined, or one can fix the energy, in which case variations in the period are needed.
In order to have an appropriate variational principle, we scale the period such that all periodic solutions are defined
as mappings $\R/\Z \to M$, that is, we define $v(t) = x(\eta t)$. For the Hamilton equations this yields
$ \partial_t v = \eta \dot x = \eta X_H(x) = \eta X_H(v)$. 
The Rabinowitz action functional $\cA^H$ is defined for a pair $(v,\eta) \in C^\infty(\R/\Z;M) \times \R$ by
\[
\cA^H(v,\eta) = \int_0^1 \lambda (\partial_t v) - \eta \int_0^1 H(v).
\]
Notice that the action is independent of the chosen Liouville form.
Critical points of $\cA^H$ with respect to variations in both $v$ and $\eta$ force closed characteristics $v$ to lie on
the hypersurface $\Sigma$ - in other words, the energy $H(v) =0$.
The set of critical points of $\cA^H$ is denoted by $\crit(\cA^H)$.
Other energy values $e$  can be chosen by replacing the Hamiltonian $H$ by $H-e$.
We allow $\eta$ to be positive or negative. If $(v,\eta)$ is a critical point of $\cA^H$, then $(\bar{v},-\eta)$, where $\bar{v}(t):=v(-t)$, is also a critical point. The pairs $(\bar{v},-\eta)$ and $(v,\eta)$ have opposite orientation.
Pairs of the form $(v,0)$ correspond to constant loops at points of $\Sigma$.
The Hamilton equations for a non-trivial closed characteristic 
are $\partial_t v = \eta X_H(v)$, $\eta\not = 0$ 
and $H(v(t))=0$.
Via the reparametrisation $v(t) := x(\sigma)$ we have $\dot{x} = X_H(x)$ where $\sigma =\eta t$, $\dot{x}$ the derivative with respect to $\sigma$ and $\eta$ is the period.
The Rabinowitz action for pairs $u=(v,\eta)$ records every periodic orbit infinitely many times, i.e.
given $x$ a non-trivial closed characteristic with period $\tau>0$, then
\[
u_k = (v_k,\eta_k),\quad v_k(t) = x(\eta_k t), \quad \eta_k = k\tau, k\in \Z,
\]
are critical points on $\cA^H$ which correspond to traversing the given closed characteristics $k$ times.
If $k$ is negative then an orbit is traversed in the opposite direction (opposite orientation) and the action
has the opposite sign.

We study critical points of $\cA^H$ by considering the appropriate positive gradient flow equation:
$\partial_s u = \nabla \cA^H(u)$, where $\nabla$ is the gradient with respect to an auxiliary Riemannian structure and $u=(v,\eta)\in C^\infty(\R\times(\R/\Z),M)\times C^\infty(\R)$. In order to do so we now define natural Riemannian structures.
An almost complex structure compatible with $\omega$ is a smooth field of complex structures on the tangent bundle $TM$, i.e. for every $x\in M$, $J(x)\colon T_x M \to T_x M$ is a linear mapping such that $J(x)^{2}=-\id$ and the assignment $g_J(\cdot ,\cdot) := \omega(\cdot, J(x) \cdot)$
defines a Riemannian metric on $M$. Let $\cJ(M,\omega)$ be the set of $\omega$-compatible, almost complex structures on $M$. Then, $\cJ(M,\omega)$ is nonempty and contractible as shown in \cite[Prop.\ 13.1]{Silva2001}.
Let us fix $J_0 \in \cJ(M,\omega)$. On $(\R^{2n},\omega_0)$, for instance, we choose $J_0$ to be the standard almost complex structure, i.e. the one for which the associated metric $g_{J_0}$ is the Euclidean metric. From now on all the $C^k$-norms will be taken with respect to the metric $g_{J_0}$ unless stated otherwise. Let
\[
(t,\eta) \mapsto J(\cdot,\eta, t) \in \cJ(M,\omega),
\]
be a smooth function such that outside an open set $V\subseteq M\times \mathbb{R},\ J$ is equal to $J_0$. 
In addition we assume that
\begin{equation}
\sup_{(t,\eta)\in \R/\Z\times \R}\|J(\cdot,\eta,t)\|_{C^{k}}<+\infty, \qquad \forall\ k\in \N,
\label{supJ}
\end{equation}
where the norms are taken with respect to $g_{J_0}$.
The set of such $\omega$-compatible almost complex structures will be denoted by $\scrJ^\infty(M,\omega,V)$.
Note that for every open set $V\subseteq M\times \mathbb{R}$ the corresponding set $\scrJ^\infty(M,\omega,V)$ is also nonempty and contractible. 
For $V=M\times \R$ we use the notation $\scrJ^\infty(M,\omega)$.
Each $J\in\scrJ^\infty(M,\omega)$  induces a metric $g_J$ on $C^{\infty}(\R/\Z;M)\times \R$ defined as follows. For every
$(\xi,\sigma), (\xi',\sigma') \in T_{(v,\eta)}\left(C^{\infty}(\R/\Z;M)\times \R\right) = C^{\infty}(\R/\Z;v^{*}TM)\times \R$ we define the metric $g_J$ by
\begin{equation}
g_J\bigl((\xi,\sigma),(\xi',\sigma')\bigr)=\int_{0}^{1}\omega\Bigl(\xi(t),J\bigl(v(t),\eta,t\bigr) \xi'(t)\Bigr) +\sigma \sigma'.\label{eqn:gJ}
\end{equation}
By assumption \eqref{supJ}, the metrics $g_J$ on $C^{\infty}(\R/\Z;M)\times \R$ induced by $J\in\scrJ^\infty(M,\omega)$ are all equivalent.

The gradient of $\cA^H$ with respect to the metric $g_J$ will be denoted by $\nabla_J \cA^H$ and is given by
\begin{equation}
\label{Jgrad}
\nabla_J \cA^H(v,\eta)=\left( \begin{array}{c}
-J(v,\eta,t)\bigl[\partial_{t}v-\eta X_{H}(v)\bigr] \\
-{\int_0^1 H(v) }\end{array} \right).
\end{equation}
Similarly, we introduce the Hessian of $\cA^H$ as the bilinear functional $\nabla^2_J\cA^H$ defined by
$$
g_{J}\left(\nabla^2_J\cA^H_{(v,\eta)}(\xi,\sigma),(\xi',\sigma')\right):=d_{(v,\eta)}\left(d\cA^H (\xi,\sigma)\right)(\xi',\sigma'),
$$
where $(\xi,\sigma), (\xi',\sigma') \in T_{(v,\eta)}\left(C^{\infty}(\R/\Z;M)\times \R\right)$. The explicit expression for $\nabla^2_J\cA^H$ is:
\begin{equation}
\nabla_J^2\cA^H_{(v,\eta)}(\xi,\sigma)=\left(\begin{array}{c}
-J(v,\eta,t)\bigl[\partial_{t}\xi-\sigma X_{H}(v)-\eta D_vX_H(\xi)\bigr]\\
-\int dH(\xi)
\end{array}
\right). \label{HessAH}
\end{equation}

The gradient flow equations $\partial_s u = \nabla_J \cA^H(u)$ are known as the Rabinowitz Floer equations, and we study solutions of these equations, 
according to the following definition.

\begin{definition}\label{moduli}
Let $J \in \scrJ^\infty(M,\omega)$.
A  {Floer trajectory} is a solution $u=(v,\eta)\in C^{\infty}(\R\times (\R/\Z);M)\times C^\infty(\R,\R)$ of the Rabinowitz Floer equations
\begin{equation}
\label{Jgrad2}
\partial_{s} u = \nabla_J\cA^H(u)=\left( \begin{array}{c}
-J(v,\eta,t)\bigl[\partial_{t}v-\eta X_{H}(v)\bigr] \\
-{\int_0^1 H(v) }\end{array} \right),
\end{equation}
for which 
\begin{equation}
\label{bddcond}
\int_{-\infty}^\infty \Vert \partial_s u\Vert^2_{L^2(\R/\Z)\times\mathbb{R}} ds <\infty.
\end{equation}
For a pair of connected components $\Lambda^-,\Lambda^+$ of the critical set $\cA^H$, the set of Floer trajectories with
$\lim_{s\to \pm \infty}u(s)\in \Lambda^{\pm}$
is called a {moduli space} of Floer trajectories and it is denoted by $\scrM(\Lambda^-,\Lambda^+)$.
\end{definition}

In our study of the Rabinowitz Floer equations, and more precisely when discussing invariance of Rabinowitz Floer homology, it is necessary to consider 1-parameter families of Hamiltonians and almost complex structures.
We denote smooth 1-parameter families of Hamiltonians and almost complex structures by $\{H_s\}_{s\in \R}$ and $\{J_s\}_{s\in \R}$, respectively. Such families are referred to as homotopies.
We will always assume our homotopies $\Gamma=\{(H_{s},J_{s})\}_{s\in\R}$ to be constant outside the interval $[0,1]$, i.e.
 \begin{equation}\label{homotopy1}
(H_{s},J_{s}) = \left\lbrace\begin{array}{l l}
(H_{0}, J_{0}) & s\leq 0\\
(H_{1}, J_{1}) & s \geq 1
\end{array}\right.
\end{equation}
and compactly supported in the Hamiltonian component, i.e., $H_s-H_0\in C_c^\infty(M;\R)$.
Define
\[
\| H_{s}\|_{1,\infty}:=\max_{\substack{x\in M\\ s\in [0,1]}}|\partial_{s}H_{s}(x)|, \qquad
\Vert J_s\Vert_\infty = \max_{\substack{x\in M, t\in \R/\Z\\ \eta\in \R, s\in [0,1]}}\|J_s(x,\eta,t)\|.
\]
The perturbed Rabinowitz Floer equations are now found by replacing $H$ by $H_s$ and $J$ by $J_s$ in Eqn.\ \eqref{Jgrad2}. Floer trajectories for the perturbed Rabinowitz Floer equations 
$\partial_s u = \nabla_{J_s} \cA^{H_s}(u)$ are solutions satisfying the integral bound in \eqref{bddcond}.
The associated moduli spaces will be denoted by $\scrM_\Gamma(\Lambda^-,\Lambda^+)$ with 
$\Lambda^-$ a connected component of  $\crit(\cA^{H_0})$ and $\Lambda^+$ of  $\crit(\cA^{H_1})$.

\section{Morse-Bott property}\label{MorseBott}

Many of the results we prove in this paper, hold in fact under more general, albeit more abstract conditions, which are defined below. Roughly speaking, these conditions require the non-degenerate periodic orbits (all of them or the ones in a prescribed action window) to be confined to a compact set. We will use the expression \emph{uniformly continuous} in order to indicate that these properties persist under compact perturbations. Given a manifold $M$ and a compact subset $K\subset M$, let $C^{\infty}_{c}(K)$ be the space of smooth functions on $M$ with support contained in $K$. We define $C^{\infty}_{c}(M)$ as the union 
\[
C^{\infty}_{c}(M)=\bigcup\limits_{\substack{K\subseteq M,\\ K \textrm{ compact}}} C^{\infty}_{c}(K),
\]
and equip it with the inductive limit topology, that is, we declare a set $U\subset C^{\infty}_{c}(M)$ to be open if and only if $U\cap C^{\infty}_{c}(K)$ is open in $C^{\infty}_{c}(K)$ for every compact subset $K\subset M$.

\begin{restatable}{definition}{variousPO}
\label{variousPO}
Let $H$ be a Hamiltonian on an exact, symplectic manifold $(M,\omega)$, having $0$ as a regular value. The following conditions concern the non-degenerate periodic orbits of $H$ and certain families of perturbations of $H$.
\begin{enumerate}
\item[\ref{PO}\namedlabel{PO}{(PO)}] We say that $H$ satisfies property \ref{PO} if 
for any fixed action window, all the non-degenerate periodic orbits are contained in a compact subset of $M$.
Moreover, we say that property \ref{PO} is \emph{uniformly continuous} at $H$ if there exist an open neighborhood $B(H)$ of $0$ in $C^{\infty}_{c}(M)$ and an exhaustion $\{K_{n}\}_{n\in\mathbb{N}}$ of $M$ by compact sets $K_{n}$, such that for every $n\in \mathbb{N}$ and every $h\in B(K_{n})=B(H)\cap C^{\infty}_{c}(K_{n})$, whenever
$$
(v,\eta)\in {\crit}(\cA^{H+h})\quad\textrm{and}\quad 0<|\cA^{H+h}(v,\eta)|\leq n
$$
then $v(S^{1})\subseteq K_{n}\cap (H+h)^{-1}(0)$. 
\item[\ref{PO+}\namedlabel{PO+}{(PO+)}] We say that $H$ satisfies property \ref{PO+} if all the non-degenerate periodic orbits of the Hamiltonian flow of $H$ are contained in a compact subset. Moreover, we say that property \ref{PO+} is \emph{uniformly continuous} at $H$ if there exist an open neighborhood $B(H)$ of $0$ in $C^{\infty}_{c}(M)$ and an exhaustion $\{K_{n}\}_{n\in\mathbb{N}}$ of $M$ by compact sets $K_{n}$, such that for every $n\in \mathbb{N}$ and every $h\in B(K_{n})=B(H)\cap C^{\infty}_{c}(K_{n})$, whenever
$$
(v,\eta)\in {\crit}(\cA^{H+h})\quad\textrm{and}\quad |\cA^{H+h}(v,\eta)|>0,
$$
then $v(S^{1})\subseteq K_{n}\cap (H+h)^{-1}(0)$. 
\end{enumerate}
\end{restatable}

Condition \ref{PO+} (and its persistence under perturbations) are necessary in this paper for the construction of the homology, but this depends on the specific construction.
In a following paper we will show that \ref{PO} and its uniform continuity are in fact sufficient for the definition of RFH, by constructing this homology as a (direct and inverse) limit of truncated homologies, taken over an increasing, nested family of intervals.

In order to be able to define Rabinowitz Floer for tentacular Hamiltonians, we will at first assume that the following Morse-Bott condition holds for $H$. 
\begin{itemize}
\item[(MB)\namedlabel{MB}{(MB)}] The associated Rabinowitz functional ${\cA^{H}}$ is Morse-Bott and all the periodic orbits are of Morse-Bott type.
\end{itemize}
In fact we will prove below that whenever the \ref{PO+} is uniformly continuous at the Hamiltonian $H$, the Morse-Bott assumption is generically satisfied in the affine space of compactly supported perturbations of $H$, and therefore not a restrictive condition.

We say that a $C^2$ action functional $\cA$ \emph{is Morse-Bott}, if the critical set of $\cA$ consists of a disjoint union of manifolds and for every connected component $\Lambda\subseteq \crit(\cA)$ and every $x\in \Lambda$
$$
T_x\Lambda=\Ker(\nabla^2_x\cA).
$$

On the other hand, the Morse-Bott condition on the periodic orbits has the following meaning:
\begin{definition}
Consider the  projection
$\pi_{{\cA^{H}}} \colon {\crit}({\cA^{H}}) \to \Sigma,\
(v,\eta) \mapsto v(0)$ and let 
$\phi^t$ be the Hamiltonian flow on $\Sigma$.
We say that the closed orbits of the Hamiltonian flow are of \emph{Morse-Bott type} if the following conditions are satisfied:
\begin{enumerate}
\item  $\eta$ is constant on every connected component $\Lambda \subseteq {\crit}({\cA^{H}})$;
\item  the image of $\Lambda$ under the projection
${\mathscr{P}}^{\Lambda}:= \pi_{{\cA^{H}}}(\Lambda)$
is a closed submanifold of $\Sigma$;
\item for all $p\in {\mathscr{P}}^{\Lambda}$
$$
T_{p}{\mathscr{P}}^{\Lambda} = \Ker(D_{p}\phi^{\eta} - Id) \subseteq T_{p} \Sigma.
$$
\end{enumerate}
\label{def:MBT}
\end{definition}
We will first show how these two conditions are related by proving the following lemma:

\begin{lemma}
If ${\cA^{H}}$ is Morse-Bott and $\eta$ is constant on every connected component of ${\crit}({\cA^{H}})$, then all the closed orbits of the Hamiltonian flow $\phi$ on $\Sigma$ are of Morse-Bott type.
\label{lem:MBtype}
\end{lemma}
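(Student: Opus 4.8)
### Proof Strategy for Lemma \ref{lem:MBtype}

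The plan is to unpack what the Morse-Bott hypothesis on $\cA^H$ says at the level of the critical manifolds $\Lambda$ and then push this information forward through the projection $\pi_{\cA^H}$ onto $\Sigma$. First I would fix a connected component $\Lambda\subseteq\crit(\cA^H)$ on which $\eta$ takes a constant value $\eta_0$. If $\eta_0=0$ the component consists of constant loops at points of $\Sigma$ and the statement is either vacuous or trivial, so assume $\eta_0\neq 0$; then every $(v,\eta_0)\in\Lambda$ is an honest (possibly iterated) closed characteristic with $v(t)=x(\eta_0 t)$, $\dot x=X_H(x)$, $H\circ x=0$. The key computation is to identify $\Ker(\nabla^2_{(v,\eta_0)}\cA^H)$ using the explicit formula \eqref{HessAH}: a pair $(\xi,\sigma)$ lies in the kernel iff $\int_0^1 dH(\xi)=0$ and $\partial_t\xi=\sigma X_H(v)+\eta_0 D_vX_H(\xi)$. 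Solving this linearized ODE along the loop, $\xi(t)$ is determined by $\xi(0)$ and $\sigma$ via the linearized flow, and the periodicity condition $\xi(1)=\xi(0)$ together with the integral constraint pins down exactly which initial data $(\xi(0),\sigma)$ are admissible. I would show this kernel maps, under the differential of $\pi_{\cA^H}$ (essentially $(\xi,\sigma)\mapsto\xi(0)$), onto $\Ker(D_p\phi^{\eta_0}-\mathrm{Id})\subseteq T_p\Sigma$, where $p=v(0)$.

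The three conditions of Definition \ref{def:MBT} then follow as follows. Condition (1) is given by hypothesis. For condition (2), I would argue that $\mathscr{P}^\Lambda=\pi_{\cA^H}(\Lambda)$ is a submanifold: since $\Lambda$ is a manifold (Morse-Bott) and the reparametrization/time-shift action of $\R/\Z$ on $\Lambda$ is free away from constant loops, $\mathscr{P}^\Lambda$ is the image of $\Lambda$ under a submersion onto its quotient followed by an injective immersion $x(0)\mapsto$ its orbit; closedness of $\mathscr{P}^\Lambda$ in $\Sigma$ follows from closedness of $\Lambda$ in $\crit(\cA^H)$ together with the properness of the evaluation map on the relevant compact pieces (here using that $\Lambda$, being a component of the critical set, is closed). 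For condition (3), the tangent space $T_p\mathscr{P}^\Lambda$ is the image of $T_{(v,\eta_0)}\Lambda$ under $d\pi_{\cA^H}$; by the Morse-Bott hypothesis $T_{(v,\eta_0)}\Lambda=\Ker(\nabla^2_{(v,\eta_0)}\cA^H)$, and by the kernel computation of the previous paragraph its image under $(\xi,\sigma)\mapsto\xi(0)$ is precisely $\Ker(D_p\phi^{\eta_0}-\mathrm{Id})$.

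I expect the main obstacle to be the bookkeeping in the kernel computation — specifically, keeping careful track of the role of the extra $\sigma$-variable (variation of period) and the integral constraint $\int_0^1 dH(\xi)=0$, and checking that these two "extra" conditions cancel in the right way so that the projection to $\xi(0)$ lands exactly on $\Ker(D_p\phi^{\eta_0}-\mathrm{Id})$ inside $T_p\Sigma$ and not, say, in $T_p\R^{2n}$ or a subspace of it. One must verify that $\xi(0)$ is automatically tangent to $\Sigma$ (this uses $dH(X_H)=0$ and the integral constraint) and that the $\sigma X_H(v)$ inhomogeneity contributes only the flow direction, which already lies in $\Ker(D_p\phi^{\eta_0}-\mathrm{Id})$ since $X_H(p)$ is fixed by the linearized return map. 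A secondary, more routine point is the smooth-manifold argument for condition (2), where one needs the freeness of the circle action (true precisely because $\eta_0\neq 0$ and $v$ is non-constant) to conclude $\mathscr{P}^\Lambda$ is an embedded submanifold rather than merely an immersed one.
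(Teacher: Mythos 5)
Your proposed identification of the image of $\Ker(\nabla^2_{(v,\eta_0)}\cA^H)$ under $(\xi,\sigma)\mapsto\xi(0)$ with $\Ker(D_p\phi^{\eta_0}-\mathrm{Id})$ has a genuine gap in the treatment of the $\sigma$-variable, and the way you intend to close it (``the $\sigma X_H(v)$ inhomogeneity contributes only the flow direction, which already lies in $\Ker(D_p\phi^{\eta_0}-\mathrm{Id})$'') is incorrect. Writing $\Phi(t):=D_p\phi^{\eta_0 t}$ and solving the linearized equation explicitly gives $\xi(t)=\Phi(t)\bigl(\xi(0)+\sigma t\,X_H(p)\bigr)$, so periodicity $\xi(1)=\xi(0)$ together with $D_p\phi^{\eta_0}X_H(p)=X_H(p)$ yields
\[
\bigl(D_p\phi^{\eta_0}-\mathrm{Id}\bigr)\,\xi(0)=-\sigma X_H(p).
\]
For $\sigma\neq 0$ and $X_H(p)\neq 0$ this means $\xi(0)$ is precisely \emph{not} in $\Ker(D_p\phi^{\eta_0}-\mathrm{Id})$; the $\sigma$-term is an obstruction, not something harmlessly absorbed into the flow direction. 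What eliminates it is exactly the hypothesis that $\eta$ is constant on $\Lambda$: that forces $T_{(v,\eta_0)}\Lambda\subseteq C^\infty(v^*TM)\times\{0\}$, and Morse--Bott ($\Ker\nabla^2\cA^H=T\Lambda$) then forces $\sigma=0$ on the entire kernel, after which your ODE computation does land where you want. You list $\eta$-constancy as ``condition (1), given by hypothesis'' but never feed it into the kernel analysis, which is where it actually does the work.

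Two further remarks. The paper's proof of $T_p\mathscr{P}^\Lambda\subseteq\Ker(D_p\phi^{\eta_0}-\mathrm{Id})$ never computes the kernel: it picks a path $p(s)\in\mathscr{P}^\Lambda$, observes $\phi^{\eta_0}(p(s))=p(s)$ by $\eta$-constancy, and differentiates in $s$; for the reverse inclusion it exhibits $(\xi,0)$ with $\xi(t)=D_p\phi^{\eta_0 t}\mathtt{v}$ as a kernel element and invokes Morse--Bott, essentially as you do. This sidesteps both the $\sigma$-issue above and the need to know a priori that $d\pi_{\cA^H}$ carries $T_{(v,\eta_0)}\Lambda$ onto $T_p\mathscr{P}^\Lambda$ (which presupposes parts of condition (2)). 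Finally, your argument for condition (2) invokes freeness of the time-shift action on $\Lambda$, which fails for multiply covered orbits; that step would need a different treatment.
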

Observe that the assumption that $\eta$ is constant on every connected component of ${\crit}({\cA^{H}})$ is not very restrictive. In particular, it is satisfied whenever the critical set consists of a union of disjoint circles and the hypersurface $\Sigma\times \{0\}$, or when we assume $H$ to be a defining Hamiltonian 
as defined in \cite{CieliebakFrauenfelder2009}, i.e. when the Reeb vector field coincides with the Hamiltonian vector field: in our setting the first condition is satisfied generically, whereas the authors of \cite{CieliebakFrauenfelder2009} and \cite{Fauck2015} work with defining Hamiltonians.
In the rest of this paper we will denote by $\Sigma_0$ the non-compact component of the critical set of $\cA^{H}$, that is, 
\[
\Sigma_0=\Sigma\times \{0\}\subset C^{\infty}(\mathbb{R}/\mathbb{Z};M)\times \mathbb{R}.
\]
Notice that $\Sigma$ and $\Sigma_0$ are diffeomorphic, but sit inside different ambient manifolds.
\begin{proof}
Fix a connected component $\Lambda \subseteq {\crit}({\cA^{H}})$ and let ${\mathscr{P}}^{\Lambda}$ be the corresponding projection on $\Sigma$. By assumption $\eta$ is constant on $\Lambda$. We want to prove that for all $p\in {\mathscr{P}}^{\Lambda}$
$$
T_{p}{\mathscr{P}}^{\Lambda} = \Ker(D_{p}\phi^{\eta} - Id).
$$
\textbf{Part 1. ($\subseteq $)}\\
Take $\mathtt{v}\in T_{p}{\mathscr{P}}^{\Lambda}$. Then for small enough $\delta>0$ there exists a path $p:(-\delta,\delta)\to \mathscr{P}^{\Lambda},\ p(0)=p$, such that
$$
\mathtt{v}= \frac{d}{ds}\Big|_{s=0}p(s).
$$

By assumption $p(s)\in \mathscr{P}^{\Lambda}$ and $\eta$ is constant on $\Lambda$, hence
for every $s\in (-\delta,\delta)$ the corresponding loop $v_s: \mathbb{R}/ \mathbb{Z} \to M,\ v_s(t):=\phi^{\eta t}(p(s))$ satisfies $(v_s,\eta)\in \Lambda$. In particular, for every $(s,t)\in (-\delta,\delta)\times S^{1},\ 
\phi^{\eta t}(p(s)) \in {\mathscr{P}}^{\Lambda}$.
For every $t\in S^1$ define
$$
\xi(t):= \frac{d}{ds}\Big|_{s=0}\phi^{\eta t}(p(s)) = D_{p}\phi^{\eta t}(\mathtt{v}).
$$
Then $\xi \in v_0^*(T\mathscr{P}^{\Lambda})$ and
$$
D_{p}\phi^{\eta}(\mathtt{v})=\xi(1)= \frac{d}{ds}\Big|_{s=0}\phi^{\eta}(p(s)) = \frac{d}{ds}\Big|_{s=0}p(s) = \xi(0)=\mathtt{v},
$$
which proves the first inclusion.\\
\textbf{Part 2. ($\supseteq $)}\\
Now we want to prove the second inclusion. Take $\mathtt{v}\in \Ker(D_{p}\phi^{\eta} - Id)\subseteq T_{p}\Sigma$ and define
$$
\xi(t):= D_{p}\phi^{\eta t}(\mathtt{v}).
$$
Then
$$
\xi(1)= D_{p}\phi^{\eta}(\mathtt{v}) = \mathtt{v}=\xi(0).
$$
If for $t\in S^1$ we define the loop $v(t):=\phi^{\eta t}(p)$, then by assumption $(v,\eta)\in \Lambda$. In particular, we have that $\xi\in v^*(TM)$. 
Now we would like to calculate $\nabla ^{2}{\cA^{H}}_{(v,\eta)}(\xi,0)$ as defined in \eqref{HessAH}. First observe that since $\mathtt{v}\in T_{p}\Sigma$, we have
$$
\int dH_{v(t)}(\xi)=\int dH_{v(t)}(D_{p}\phi^{\eta t}(\mathtt{v})) = \int dH_{p}(\mathtt{v})=0.
$$
On the other hand, note that we can express the first term of $\nabla ^{2}{\cA^{H}}_{(v,\eta)}(\xi,0)$ using the Lie bracket and Lie derivative as follows:
\begin{align}
\partial_{t}\xi - \eta J_{0} Hess_{v} H(\xi) & = \nabla_{\partial_{t}v}\ \xi - \eta \nabla_{\xi} X_H  = \nabla_{\eta X_H}\xi -\eta \nabla_{\xi}X_H= [\eta X_H,\xi]  = \mathcal{L}_{\eta X_H}\xi \nonumber\\
& = \frac{d}{d s}\Big|_{s=0}D \phi^{-\eta s}\xi(t+s)  = \frac{d}{d s}\Big|_{s=0}D \phi^{-\eta s}D_{p}\phi^{\eta (t+s)}(\mathtt{v}) \nonumber  = \frac{d}{d s}\Big|_{s=0} D_{p}\phi^{\eta t} (\mathtt{v}) \nonumber = 0.
\label{eqn:KerHess}
\end{align}
This proves that 
$$
(\xi,0)\in \Ker(\nabla ^{2}{\cA^{H}}_{(v,\eta)})= T_{(v,\eta)}\Lambda,
$$
in view of the Morse-Bott property of the action functional. Hence $\xi(0)=\mathtt{v}\in T_{p}{\mathscr{P}}^{\Lambda}$, which proves the second inclusion.
\end{proof}

If $H$ is a Hamiltonian on an exact, symplectic manifold $(M,\omega)$, having $0$ as a regular value,
the corresponding Rabinowitz action functional will always be Morse-Bott along the component of the critical set consisting of constant loops, but it may in general not be Morse-Bott along the other components. Nevertheless, one can prove that the Morse-Bott property is generic under certain assumptions. The present proof is inspired by Appendix B of \cite{CieliebakFrauenfelder2009}, which applies to compactly supported Hamiltonians. However, since we consider possibly non-compact energy level sets, we will restrict ourselves to compact perturbations $h\in C^{\infty}_{c}(M)$ and Hamiltonians such that uniform continuity of \ref{PO+} is satisfied. 

\begin{theorem}
Let $(M,\omega)$ be a symplectic manifold and $H:M\to \mathbb{R}$ a smooth Hamiltonian, such that $0$ is a regular value of $H$ and \ref{PO+} is uniformly continuous at $H$. Then there exists an open neighborhood $B(H)$ of $0$ in $C^{\infty}_{c}(M)$, such that the set 
\begin{equation}
A(H):=\{h \in B(H)\ |\ \cA^{H+h}\ \textrm{satisfies}\ \ref{MB} \}
\label{eqn:Hreg}
\end{equation}
 is comeager in $B(H)$. Moreover, if $\Sigma$ is of contact type, then we can assume that for all $h\in A(H)$ the set of critical values of $\cA^{H+h}$ is closed and discrete.
 \label{thm:MB(K)}
\end{theorem}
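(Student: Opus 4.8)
The plan is to run a Sard--Smale transversality argument along the lines of Appendix~B of~\cite{CieliebakFrauenfelder2009}, but localised using the uniform continuity of \ref{PO+} so that non-compactness of $\Sigma$ does not obstruct the argument. First I would fix an exhaustion $\{K_n\}$ and a neighborhood $B(H)$ of $0$ in $C^\infty_c(M)$ witnessing the uniform continuity of \ref{PO+} at $H$, and recall that along the constant-loop component $\Sigma_0$ the functional $\cA^{H+h}$ is automatically Morse-Bott, so the issue is only the components carrying genuine closed characteristics. By the uniform continuity hypothesis, for every $h\in B(K_n)$ all non-constant critical points $(v,\eta)$ of $\cA^{H+h}$ with $|\cA^{H+h}(v,\eta)|\le n$ have image inside the \emph{fixed} compact set $K_n\cap(H+h)^{-1}(0)$. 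This is the key reduction: on each such compact piece the classical compact-case argument applies verbatim, and a countable intersection over $n$ will produce the comeager set.

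Next I would set up the universal moduli problem. For a fixed $n$, consider the map sending a pair $((v,\eta),h)$ with $h\in B(K_n)$, $v$ non-constant, $|\cA^{H+h}(v,\eta)|\le n$, to $\nabla\cA^{H+h}(v,\eta)$ in the appropriate Banach space bundle (working with $W^{1,p}$-loops, $p>2$, or a $C^\ell$-completion of the perturbation space à la Floer). The core computation is to show this section is transverse to the zero section: given a critical point $(v,\eta)$, one must show that the linearisation in the $h$-direction, composed with the $L^2$-projection onto the cokernel of $\nabla^2\cA^{H+h}_{(v,\eta)}$, is surjective. Here one uses that perturbations $h$ supported in $K_n$ can be chosen to vanish to high order along $v(S^1)$ but with prescribed first variation transverse to $\Sigma$; since $v(S^1)\subset K_n$ is compact and $v$ is non-constant (hence, after passing to the prime orbit, an embedded or finitely-covered curve), the usual unique-continuation argument for the Hessian shows any element of the cokernel vanishing against all such $h$ must be identically zero. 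Then the Sard--Smale theorem gives a comeager set $A_n(H)\subset B(K_n)$ of $h$ for which $\cA^{H+h}$ restricted to the relevant action window and away from $\Sigma_0$ is Morse; combined with the fact that the Morse-Bott property along $\Sigma_0$ is automatic and that distinct components are isolated, one upgrades ``Morse away from constants'' to the full \ref{MB} condition on that window. Intersecting $A_n(H)$ (pulled back to $B(H)$) over all $n$ yields the comeager set $A(H)$.

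For the last assertion, when $\Sigma$ is of contact type I would use axiom~\ref{h3} (or rather the abstract contact-type hypothesis): the contact form gives an action-period relation $\cA^{H+h}(v,\eta) = \eta\cdot(\text{positive quantity})$ for non-constant critical points, bounded below in absolute value, which together with \ref{PO+}'s uniform continuity confines critical points of bounded action to a fixed compact set on which the Morse property forces critical values to be isolated; a properness/compactness argument on each action window $[-n,n]$ then shows that only finitely many critical values lie in it, hence the set of critical values is closed and discrete in $\R\setminus\{0\}$, and $0$ is itself isolated because the constant-loop component contributes only the critical value $0$.

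The main obstacle I expect is the surjectivity (transversality) computation in the universal setting: one must verify that compactly supported perturbations $h\in C^\infty_c(K_n)$ suffice to kill the cokernel of the Hessian $\nabla^2\cA^{H+h}$ even though $\cA^H$ has the degenerate direction coming from variations of $\eta$ and the somewhat unusual Lagrange-multiplier structure of the Rabinowitz functional. This requires a careful unique-continuation argument for the Jacobi-type operator appearing in~\eqref{HessAH}, plus care that the perturbation does not destroy regularity of $0$ as a value of $H+h$ — which is guaranteed on $K_n$ by taking $B(H)$ small in the $C^1$-topology — and that the Banach-space completions of the perturbation space are chosen so Sard--Smale applies while still being dense enough to recover the $C^\infty$ statement via Taubes's trick.
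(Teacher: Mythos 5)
Your overall strategy is sound and parallels the paper's, but you have quietly substituted the \ref{PO}-version of uniform continuity for the \ref{PO+}-version, and this makes the final bookkeeping step more awkward than it needs to be. You write that for $h\in B(K_n)$ the non-constant critical points \emph{with action $\le n$} lie in $K_n$. That is the conclusion of uniformly continuous \ref{PO}, not \ref{PO+}. The hypothesis of the theorem is that \ref{PO+} is uniformly continuous, which by Definition \ref{variousPO} says that for $h\in B(K_n)$ \emph{all} non-constant critical points, with no action cutoff, lie in $K_n\cap(H+h)^{-1}(0)$. With this stronger input, the compact-case transversality theorem (\cite[Thm.~B.1]{CieliebakFrauenfelder2009}, which the paper simply cites rather than re-deriving as you propose) applies once and for all on $B(K_n)$ and gives a comeager $A(K_n)\subseteq B(K_n)$ for which $\cA^{H+h}$ is Morse--Bott \emph{on its entire critical set}, not merely on an action window. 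Consequently $A(K_n)\subseteq A(H)$ for every $n$, and the paper concludes by noting that $\bigcup_n A(K_n)\subseteq A(H)$ and is large in $B(H)$, since each complement $B(K_n)\setminus A(K_n)$ is meager in the closed subspace $B(K_n)$ and hence in $B(H)$.

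By contrast, your reduction forces you to work with action windows and you then say you ``intersect $A_n(H)$ (pulled back to $B(H)$) over all $n$.'' As written this fails: each $A_n(H)$ sits inside $B(K_n)$, and $\bigcap_n B(K_n)$ is far too small. One can patch this by replacing $A_n(H)$ with $A_n(H)\cup(B(H)\setminus B(K_n))$ before intersecting, but that requires a nestedness compatibility between the $A_n(H)$ that you have not verified, and the whole complication evaporates once you use \ref{PO+} correctly. A second, smaller remark: after getting ``Morse away from constants'' you still need $\eta$ constant on components and the critical set to decompose into $\Sigma_0$ plus isolated circles; this is exactly what Lemma~\ref{lem:MBtype} is for, and is also provided by \cite[Thm.~B.1]{CieliebakFrauenfelder2009}, so that upgrade is not free but is available. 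For the final assertion your action--period argument is the right idea, but the cleanest route is through the two lemmas the paper already has in hand: Lemma~\ref{lem:etaInf} (the period $\eta$ is bounded away from $0$ on the non-constant part of the critical set) and Lemma~\ref{lem:critVal} (which turns the Morse--Bott property plus \ref{PO} plus contact type into closedness and discreteness of $\critval(\cA^{H+h})$); citing those avoids re-running the compactness argument by hand.
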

The first part of the proof is just an adaptation of \cite[Thm. B.1]{CieliebakFrauenfelder2009} to the non-compact setting. In order to prove the second part, though, we will first have to prove a couple of lemmas. The proof of Theorem \ref{thm:MB(K)} can be found at the end of this section.

The first lemma shows how the \ref{PO} and \ref{PO+} conditions guarantee that the value of $\eta$ on the non-degenerate components of ${\crit}({\cA^{H}})$ is (uniformly) bounded away from $0$. This will be a crucial step in proving that the set of critical values of ${\cA^{H}}$ is discrete and, later on, in the construction of a suitable set of perturbations when we prove transversality for tentacular Hamiltonians.
\begin{lemma}
Let $(M,\omega)$ be an exact symplectic manifold, and $H:M\to\mathbb{R}$ a Hamiltonian having $0$ as a regular value. For all $n\in \mathbb{N}$ denote
$$
\eta(H)_{\infty}^{n} := \inf\left\lbrace |\eta|\ \Big|\ (v,\eta)\in {\crit}({\cA^{H}})\cap ({\cA^{H}})^{-1}([-n,0)\cup(0,n])\right\rbrace.
$$
Then the following statements hold:
\begin{enumerate}
\item If $H$ satisfies condition \ref{PO}, then for all $n\in \mathbb{N}$, $\eta(H)_{\infty}^{n}>0$. 
\item If $H$ satisfies condition \ref{PO+}, then $\inf_{n\in\mathbb{N}}\eta(H)_{\infty}^{n}>0$.
\end{enumerate}
\label{lem:etaInf}
\end{lemma}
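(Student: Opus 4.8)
The plan is to argue by contradiction in both cases, exploiting the fact that periodic orbits with small period have small action, so that a sequence of non-degenerate orbits with $|\eta|\to 0$ must escape to infinity, contradicting \ref{PO} (resp. \ref{PO+}). First I would establish the elementary estimate that bounds the action of a critical point $(v,\eta)\in\crit(\cA^H)$ in terms of $|\eta|$ and the geometry of $\Sigma$ near $v(S^1)$. For a critical point, $H(v(t))\equiv 0$ and $\partial_t v=\eta X_H(v)$, so
\[
\cA^H(v,\eta)=\int_0^1\lambda(\partial_t v)=\eta\int_0^1\lambda(X_H(v(t)))\,dt,
\]
hence $|\cA^H(v,\eta)|\le |\eta|\cdot\sup_{t}|\lambda(X_H(v(t)))|$. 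The point is that if $v(S^1)$ is contained in a fixed compact set $K$, then $\sup_{x\in K}|\lambda_x(X_H(x))|=:C_K<\infty$, so $|\cA^H(v,\eta)|\le C_K|\eta|$; equivalently, a non-degenerate orbit lying in $K$ with $|\cA^H(v,\eta)|$ bounded below by a fixed positive constant must have $|\eta|$ bounded below.

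For statement (1), fix $n\in\mathbb{N}$ and suppose $\eta(H)^n_\infty=0$. Choose a sequence $(v_k,\eta_k)\in\crit(\cA^H)$ with $0<|\cA^H(v_k,\eta_k)|\le n$ and $|\eta_k|\to 0$. By \ref{PO} (applied to the action window $[-n,n]$), all these orbits lie in a single compact set $K$; by the displayed estimate $|\cA^H(v_k,\eta_k)|\le C_K|\eta_k|\to 0$. On the other hand, the orbits are non-constant (since $|\cA^H|>0$ forces $\eta_k\neq 0$ and $v_k$ non-constant as $0$ is regular), so up to a subsequence we would need $\cA^H(v_k,\eta_k)\to 0$ while staying nonzero — which is not yet a contradiction by itself. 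The extra ingredient is that there is a \emph{minimal period} bound for closed characteristics in $K$: since $\Sigma\cap K$ is compact and $0$ is a regular value, $X_H$ is bounded away from zero on $\Sigma\cap K$, so any non-constant closed characteristic through $K$ has period bounded below by $T_K:=\mathrm{diam}(K)/\sup_{\Sigma\cap K}|X_H|$... more carefully, by a standard argument, $|\eta_k|=|\text{period}|\ge 2\pi/\sup_{\Sigma\cap K}\|D X_H\|$ or one uses that a $1$-periodic solution of $\partial_t v=\eta X_H(v)$ with $\eta$ small and $v$ in $K$ must be constant by Gronwall (the linearized flow $D\phi^{\eta t}$ is close to the identity, so it has no nonzero fixed vectors unless forced, and more directly: if $|\eta|$ is small enough relative to the $C^1$-norm of $X_H$ on $K$, the only $1$-periodic orbit is a critical point of $H$, contradicting regularity). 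Thus $|\eta_k|$ cannot tend to $0$, giving the contradiction and proving $\eta(H)^n_\infty>0$.

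For statement (2), the argument is the same but one must produce a \emph{single} compact set working for all $n$. Here \ref{PO+} is exactly what is needed: \emph{all} non-degenerate periodic orbits of $X_H$ lie in one compact set $K$, independent of the action window. Running the Gronwall/minimal-period argument above on this fixed $K$ yields a uniform lower bound $|\eta|\ge\varepsilon_K>0$ valid for every non-degenerate $(v,\eta)$ with $|\cA^H(v,\eta)|>0$, hence $\inf_{n}\eta(H)^n_\infty\ge\varepsilon_K>0$. The main obstacle I anticipate is making the "small period forces constant orbit" step fully rigorous and uniform over the compact set: one wants a clean statement that on a compact neighborhood of $\Sigma\cap K$ in $M$ there is $\varepsilon>0$ such that the time-$\eta$ Hamiltonian flow has no nonconstant $1$-periodic orbit on $\Sigma$ for $0<|\eta|<\varepsilon$. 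This follows from compactness and the implicit function theorem / Gronwall's inequality applied to $\partial_t v=\eta X_H(v)$ — bounding $\|v(t)-v(0)\|\le |\eta|\,e^{|\eta|\,\mathrm{Lip}(X_H)}\sup_K|X_H|$ so that for $|\eta|$ small the orbit stays in a coordinate chart where $X_H$ is Lipschitz and the only $1$-periodic solution is stationary — but the bookkeeping of constants should be done carefully since everything downstream (discreteness of critical values, transversality) relies on this bound.
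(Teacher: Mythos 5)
Your proof is correct in outline, but it takes a genuinely different route from the paper's. The paper also argues by contradiction with a sequence $\eta_k\to 0$, but then runs Arzel\`a--Ascoli: by \ref{PO} the loops $v_k$ lie in a fixed compact $K$ and $\|\partial_t v_k\|_{L^\infty}\le |\eta_k|\sup_K|X_H|\to 0$, so a subsequence converges to a constant loop $(v,0)\in\Sigma_0$; this contradicts the fact that $\Sigma_0$ is an isolated component of $\crit(\cA^H)$, which follows from the automatic Morse--Bott property of $\cA^H$ along $\Sigma_0$ (cited from Cieliebak--Frauenfelder, Appendix B, and Fauck). Your proof instead appeals to a minimal-period lower bound on compact sets. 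Both arguments use \ref{PO}/\ref{PO+} in the same place (to confine the orbits to a compact set so the relevant constants are finite); the paper's route plugs directly into the Morse--Bott/isolation machinery it needs anyway, while yours is more elementary and avoids any Morse--Bott input.

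On the step you yourself flag as the obstacle: your first suggestion $T_K=\mathrm{diam}(K)/\sup|X_H|$ is indeed wrong (arbitrarily small closed orbits defeat it), but the Yorke-type bound you write next is exactly what is needed and is a clean theorem: if $\|DX_H\|\le L$ on a compact neighborhood of $\Sigma\cap K$, then every non-constant closed orbit of $X_H$ contained in it has (minimal) period $\ge 2\pi/L$. Since $0$ is a regular value, $X_H\ne 0$ on $\Sigma$, so the orbit corresponding to $(v_k,\eta_k)$ with $\eta_k\ne 0$ is non-constant, and $\eta_k$ is one of its periods, hence $|\eta_k|\ge 2\pi/L_K$. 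The later ``Gronwall plus coordinate chart'' phrasing is weaker than this and does not quite close the gap: small diameter alone does not rule out non-constant periodic orbits, and ``Lipschitz in a chart'' isn't the operative mechanism. If you prefer a chart-based argument to Yorke, you should say explicitly that each $p\in\Sigma\cap K$ has a flow box of uniform radius $r>0$ (by compactness and $X_H(p)\ne 0$), and a closed orbit of diameter $<r$ must be constant because the flow-box coordinate is strictly monotone along trajectories; then combine this with the Gronwall diameter bound. Either fix makes your argument rigorous. Finally, the opening estimate $|\cA^H(v,\eta)|\le C_K|\eta|$ is true but, as you note, plays no role in the contradiction and can be dropped.
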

\begin{proof}
Fix $n\in\mathbb{N}$ and suppose that $\eta(H)_{\infty}^{n}=0$. That means that there exists a sequence 
$$
(v_{k},\eta_{k})\in {\crit}({\cA^{H}})\cap ({\cA^{H}})^{-1}([-n,0)\cup(0,n])\},
$$
such that $\lim_{k\to \infty}\eta_{k}=0$.

On the other hand, by property (PO) there exists a compact set $K\subseteq \Sigma$, such that for all $k\in \mathbb{N}$, we have $v_{k}(S^{1}) \subseteq K$ and hence
$$
\|\partial_{t}v_{k}(t)\|_{L^{\infty}(S^{1})} \leq |\eta_{k}| \sup_{K}|X_H|.
$$
Therefore, by the Arzel\'{a}-Ascoli theorem, there exists a convergent subsequence, which we will denote in the same way:
$$
\lim_{k\to\infty}(v_{k},\eta_{k})=(v,\eta).
$$
By continuity, $\eta=0$ and $\partial_{t}v=0$. Moreover, by continuity of $\nabla{\cA^{H}}$, $(v,\eta)$ is a critical point of ${\cA^{H}}$. Therefore $(v,\eta)\in \Sigma_0$. Yet, this would contradict the fact that $\Sigma_0$ is isolated in ${\crit}({\cA^{H}})$ due to the fact that for every Hamiltonian the corresponding action functional is Morse-Bott along $\Sigma_0$, as proven in Theorem B.1, Step 4 in \cite{CieliebakFrauenfelder2009}.
Indeed, by the arguments presented in the proof of \cite[Thm. 23]{Fauck2016} and by the Morse-Bott property for every $(v,0)\in\Sigma_0$ we can find a neighborhood of $(v,0)$, which does not contain any critical points of ${\cA^{H}}$ except those in $\Sigma_0$. That gives a contradiction and proves the first result.

To prove the second result we argue analogously just for a sequence $(v_{k},\eta_{k})\in {\crit}({\cA^{H}})\setminus ({\cA^{H}})^{-1}(0)$ with $\lim_{k\to \infty}\eta_{k}=0$.
\end{proof}

The previous lemma shows that, under the \ref{PO} assumption, $\eta$ is bounded away from $0$ on the nondegenerate components of  ${\crit}({\cA^{H}})$. Moreover, we will show below that ${\critval}({\cA^{H}})$ is closed and discrete.
\begin{lemma}
\label{lem:critVal}
Let $(M,\omega=d\lambda)$ be an exact symplectic manifold and $H :M\to \mathbb{R}$ a smooth Hamiltonian, such that
\begin{enumerate}
\item $\Sigma=H^{-1}(0)$ is a smooth hypersurface of contact type;
\item the corresponding action functional ${\cA^{H}}$ is Morse-Bott;
\item for any fixed action window, the image of the non-degenerate periodic orbits in this action window is contained in a compact subset of $\Sigma$ (property \ref{PO}).
\end{enumerate}
Then its set of critical values ${\critval}({\cA^{H}})$ is closed and discrete.
\end{lemma}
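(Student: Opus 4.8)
The plan is to reduce the statement to the claim that $\critval(\cA^H)\cap[-n,n]$ is \emph{finite} for every $n\in\N$; this suffices, since a subset of $\R$ meeting every compact interval in a finite set is automatically closed and discrete. So I would fix $n$ and suppose, toward a contradiction, that this intersection is infinite, i.e.\ that there is a sequence of pairwise distinct critical values $c_k\in[-n,n]$, realized by critical points $(v_k,\eta_k)$.

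The argument rests on two a priori bounds on $|\eta|$. Writing a critical point as $(v,\eta)$, the relations $\partial_t v=\eta X_H(v)$ and $H(v)\equiv 0$ give $\cA^H(v,\eta)=\int_0^1\lambda(\partial_t v)=\eta\int_0^1\lambda\bigl(X_H(v(t))\bigr)\,dt$, where one may and should take for $\lambda$ a primitive of $\omega$ adapted to the contact structure, as $\cA^H$ does not depend on this choice. The contact type assumption then means $\lambda(X_H)>0$ along $\Sigma$, hence $\lambda(X_H)\ge c_0(n)>0$ on the compact set $K_n\subseteq\Sigma$ furnished by property \ref{PO} for the window $[-n,n]$; consequently every critical point with $v(S^1)\subseteq K_n$ satisfies $|\cA^H(v,\eta)|\ge c_0(n)\,|\eta|$, and in particular $|\eta|\le n/c_0(n)$ whenever $\cA^H(v,\eta)\in[-n,n]$. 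In the opposite direction, Lemma \ref{lem:etaInf}(1) (which uses exactly property \ref{PO}) gives the lower bound $|\eta|\ge\eta(H)_\infty^n>0$ for all critical points in $(\cA^H)^{-1}([-n,0)\cup(0,n])$.

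Using these bounds I would show that $\mathcal{C}_n:=\bigl(\crit(\cA^H)\setminus\Sigma_0\bigr)\cap(\cA^H)^{-1}([-n,n])$ is compact in the $C^\infty$-topology. For a sequence $(v_k,\eta_k)$ in $\mathcal{C}_n$ one has $v_k(S^1)\subseteq K_n$ by \ref{PO} and $\eta(H)_\infty^n\le|\eta_k|\le n/c_0(n)$ by the previous paragraph, so $\|\partial_t v_k\|_{C^0}=|\eta_k|\,\|X_H(v_k)\|_{C^0}$ is uniformly bounded; Arzel\`a--Ascoli yields a $C^0$-convergent subsequence, and since $\partial_t v_k=\eta_k X_H(v_k)$ with all values staying in the compact set $K_n$, bootstrapping upgrades this to convergence $(v_k,\eta_k)\to(v,\eta)$ in $C^\infty(\R/\Z;M)\times\R$. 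Continuity of $\nabla\cA^H$ makes $(v,\eta)$ a critical point, the action passes to the limit so $\cA^H(v,\eta)\in[-n,n]$, and $|\eta|\ge\eta(H)_\infty^n>0$ forces $(v,\eta)\notin\Sigma_0$; hence $(v,\eta)\in\mathcal{C}_n$, and $\mathcal{C}_n$ is sequentially, hence, compact.

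The Morse--Bott assumption then finishes the proof: $\crit(\cA^H)$ is a submanifold, so its connected components are open in it, and therefore the compact set $\mathcal{C}_n$ meets only finitely many of them, say $\Lambda_1,\dots,\Lambda_m$; on each connected $\Lambda_i$ the functional $\cA^H$ is constant (since $d\cA^H$ vanishes on $\crit(\cA^H)$), equal to some value $c_i$. As every critical point with action in $[-n,n]$ either lies on $\Sigma_0$ (action $0$) or in $\mathcal{C}_n$, we conclude $\critval(\cA^H)\cap[-n,n]\subseteq\{0,c_1,\dots,c_m\}$, contradicting the assumption that infinitely many distinct critical values lie in $[-n,n]$. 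I expect the compactness statement of the third paragraph to be the main obstacle: the delicate point is to combine the upper bound on $|\eta|$ (from contact type), the lower bound on $|\eta|$ (from Lemma \ref{lem:etaInf}, itself resting on \ref{PO} and the Morse--Bott property along $\Sigma_0$), and the confinement of the loops (from \ref{PO}), so that a $C^\infty$-limit of critical points with action in the window genuinely remains in $\mathcal{C}_n$, i.e.\ neither escapes to infinity nor collapses onto the constant-loop component $\Sigma_0$.
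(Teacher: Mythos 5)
Your proof is correct, and it uses the same core estimates as the paper's argument (the upper bound $|\eta|\lesssim|\cA^H(v,\eta)|$ from the contact condition on the compact set furnished by \ref{PO}, the lower bound $|\eta|\ge\eta(H)_\infty^n>0$ from Lemma~\ref{lem:etaInf}, and Arzel\`a--Ascoli to extract a convergent subsequence of critical points). However, you package these ingredients into a cleaner global reduction than the paper does. The paper proves the three claims ``$\critval(\cA^H)$ is closed'', ``$0$ is not an accumulation point'', and ``nonzero critical values are not accumulation points'' as separate steps, and for the last one it appeals to an external isolation-of-components argument following Fauck. You instead observe that ``closed and discrete'' is equivalent to ``finite intersection with every compact interval'', prove that $\mathcal{C}_n=(\crit(\cA^H)\setminus\Sigma_0)\cap(\cA^H)^{-1}([-n,n])$ is compact in the $C^\infty$-topology, and then conclude by the elementary open-cover argument: since $\crit(\cA^H)$ is a (Morse--Bott) manifold, its connected components are open in it, so the compact set $\mathcal{C}_n$ meets only finitely many, and each carries a single critical value. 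This last step is more self-contained, and the reduction to windowed finiteness makes the logical flow cleaner. Two small points worth making explicit in a write-up: first, for a critical point $(v,\eta)$ with $\eta\neq 0$ the contact-type identity $\cA^H(v,\eta)=\eta\int_0^1\lambda(X_H(v))$ with $\lambda(X_H)>0$ on $\Sigma$ forces $\cA^H(v,\eta)\neq 0$, so every element of $\mathcal{C}_n$ actually lies in $(\cA^H)^{-1}([-n,0)\cup(0,n])$ and Lemma~\ref{lem:etaInf} applies; second, the bootstrap from $C^0$ to $C^\infty$ convergence uses that the loops stay in a fixed compact set where $X_H$ and its derivatives are bounded, which you correctly have from \ref{PO}.
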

\begin{proof}
First we will prove that the set ${\critval}({\cA^{H}})$ is closed. Take a sequence $\{a_{k}\}\subset {\critval}({\cA^{H}})$, such that $\lim_{n\to\infty}a_{k}=a$. Without loss of generality we can assume that $a_{k}\neq 0$. Therefore, there exist 
$$
(v_{k},\eta_{k})\in {\crit}({\cA^{H}})\cap \left({\cA^{H}}\right)^{-1}([\inf_{k\in\mathbb{N}}a_{k},\sup_{k\in\mathbb{N}}a_{k} ] \setminus\{0\}), \qquad \textrm{such that} \qquad {\cA^{H}}(v_{k},\eta_{k})=a_{k}.
$$
In particular, by property \ref{PO}, there exists a compact subset $K\subseteq M$, such that for all $k\in \mathbb{N}$ we have 
$v_{k}(S^{1}) \subseteq K$. Since $\Sigma$ is of contact type, there exists a Liouville vector field $Y$ for which
$$
a_{n} = {\cA^{H}}(v_{n},\eta_{n})  = \eta_{n} \int dH_{v_{n}}(Y),\qquad\qquad
|a_{n}|  \geq |\eta_{n}| \inf_{K} dH(Y)>0.
$$
This means that there is a convergent subsequence (which we will denote in the same way), such that
$\lim_{k\to\infty}\eta_{k} =\eta$.
Knowing that $(v_{k},\eta_{k})$ are critical points, we get
$$
v_{k}(t) \in K \qquad \forall\ k\in \mathbb{N},\ t \in S^{1},\qquad
|\partial_{t}v_{k}(t)|  \leq |\eta_{k}|\ \sup_{K}|X_H|,
$$
hence we can use Arzel\'{a}-Ascoli theorem, to conclude that there exists a convergent subsequence satisfying
$$
\lim_{k\to\infty}(v_{k},\eta_{k})=(v,\eta).
$$
By the fact that ${\cA^{H}}$ is $C^{2},\ (v,\eta)\in {\crit}({\cA^{H}})$ and ${\cA^{H}}(v,\eta)=a$ . This proves that ${\critval}({\cA^{H}})$ is closed.

Suppose that $0$ were an accumulation point of ${\critval}({\cA^{H}})$. Then by the argument above, we would have a sequence
$$
(v_{k},\eta_{k})\in {\crit}({\cA^{H}})\cap ({\cA^{H}})^{-1}([ \inf_{k\in\mathbb{N}}a_{k},\sup_{k\in\mathbb{N}}a_{k} ]\setminus\{0\}), \qquad\textrm{such that} \qquad\lim_{k\to\infty}\eta_{k} =0.
$$
But this would contradict the results of Lemma \ref{lem:etaInf}.
Therefore $0$ is not an accumulation point.

Now we would like to prove that ${\critval}({\cA^{H}})\setminus\{0\}$ has no accumulation points. We can argue as in \cite[Thm. 23]{Fauck2016}, and apply the same argument as in the proof of Lemma \ref{lem:etaInf} to other connected components of ${\crit}({\cA^{H}})$, to show that the Morse-Bott property of ${\cA^{H}}$ guarantees that ${\crit}({\cA^{H}})$ is closed and all its connected components are iolated, even if $ {\crit}({\cA^{H}})\setminus \Sigma_0$ consists of manifolds of different dimension (as is the case in \cite{Fauck2016}).

Consequently, the corresponding critical values are also isolated.
\end{proof}

We will conclude this section with the proof of Theorem \ref{thm:MB(K)}.

\noindent\textit{Proof of Theorem \ref{thm:MB(K)}}: By uniform continuity of \ref{PO+} at $H$, there exists an open neighborhood $B(H)$ of $0$ in $C_c^\infty(M)$ and an exhaustion $\{K_{n}\}_{n\in\mathbb{N}}$ of $M$ by compact sets $K_{n}$, such that for every $n\in \mathbb{N}$ and every $h\in B(K_{n})=B(H)\cap C_c^\infty(K_{n})$ all the nondegenerate periodic orbits of the perturbed Hamiltonian $H+h$, are contained in $K_{n}$. 
By applying \cite[Thm. B.1]{CieliebakFrauenfelder2009}, we can conclude that for every $n\in \mathbb{N}$ there exists a comeager subset $A(K_n)\subseteq B(K_n)$, such that for every $h\in A(K_n)$ the following properties hold:
\begin{enumerate}
\item the Rabinowitz action functional $\cA^{H+h}$ is Morse-Bott;
\item the critical set of $\cA^{H+h}$
consists of the hypersurface $(H+h)^{-1}(0)\times\{0\}$ and a disjoint union of circles. In particular, $\eta$ is constant on every connected component of ${\crit}(\cA^{H+h})$. 
\end{enumerate}
By Lemma \ref{lem:MBtype} the two conditions above imply that for all $h\in A(K_n)$, the non-degenerate periodic orbits of the Hamiltonian flow of $X_{H+h}$ are of Morse-Bott type. For every $n$ the space $C^{\infty}_{c}(K_n)$ is a Baire space, 
hence $A(K_n)$ is dense in $B(K_n)$. This means that the union $\displaystyle{\cup_{n\in\mathbb{N}}A(K_n)}$ is dense in $B(H)$. Since $\displaystyle{\cup_{n\in\mathbb{N}}A(K_n)}$ is a subset of $A(H)$, the claim follows.

Moreover, from \cite[Thm. B.1]{CieliebakFrauenfelder2009} we can infer that for $h\in \displaystyle{\cup_{n\in\mathbb{N}}A(K_n)}$  the corresponding ${\crit}(\cA^{H+h})\setminus (H+h)^{-1}(0)\times\{0\}$ is a $1$-dimensional submanifold in $C^{\infty}(S^{1},M)$. In particular all of its connected components are isolated, which implies that $a \in {\critval}({\cA^{H+h}}),\ a\neq 0$ cannot be an accumulation point of ${\critval}({\cA^{H+h}})$.

Now suppose that $\Sigma$ is of contact type. By possibly shrinking $B(H)$ we can assume that for every $h\in B(H)$ the associated hypersurface $(H+h)^{-1}(0)$ is of contact type. Then by Lemma \ref{lem:critVal}, for every $h\in A(H)$ the corresponding set of critical values ${\critval}(\cA^{H+h})$ is closed and discrete.

\hfill $\square$

\section{Compactness}
\label{sec:compact}
In this section we will consider moduli spaces of flow lines with cascades, as defined in \cite{Frauenfelder2004}.
The setting is as follows: consider an exact, symplectic manifold $(M,\omega=d\lambda)$, a Hamiltonian function $H:M \to \mathbb{R}$, and let ${\cA^{H}}$ be the associated Rabinowitz action functional.
Fix a $2$-parameter family of $\omega$-compatible, almost complex structures $J=\{J(\cdot, \eta, t)\}_{(\eta,t)\in\R\times S^{1}}\in \mathscr{J}^\infty(M,\omega)$. Let $(f,g)$ be a Morse-Smale pair on ${\crit}({\cA^{H}})$.
For a pair of critical points $p,q \in {\crit}(f)$, we will denote by $\overline{{\mathscr{M}}}_{0}(p,q)$ the set of flow lines with zero cascades from $p$ to $q$ and by 
$\overline{{\mathscr{M}}}_{m}(p,q)$ the set of flow lines with $m$ cascades from $p$ to $q$.

The group $\mathbb{R}$ acts by timeshift on the Morse trajectories of $\overline{{\mathscr{M}}}_{0}(p,q)$ and for $m\geq 1$
the group $\mathbb{R}^{m}$ acts on $\overline{{\mathscr{M}}}_{m}(p,q)$ by time shift on each cascade: we denote the quotients by ${\mathscr{M}}_{0}(p,q)$ and ${\mathscr{M}}_{m}(p,q)$, respectively, and define the \emph{set of flow lines with cascades from} $p$ \emph{to} $q$ by
\begin{equation}
{\mathscr{M}}(p,q):= \bigcup_{m\in\mathbb{N}_{0}}
{\mathscr{M}}_{m}(p,q).
\label{M(p,q)}
\end{equation}

Since the action is increasing along the cascades, the following properties of the moduli spaces easily follow: 
\begin{itemize}
\item if ${\cA^{H}}(p)>{\cA^{H}}(q)$, then ${\mathscr{M}}(p,q)= \emptyset$,
\item if ${\cA^{H}}(p)={\cA^{H}}(q)$, then ${\mathscr{M}}(p,q)={\mathscr{M}}_{0}(p,q)$,
\item if ${\cA^{H}}(p)<{\cA^{H}}(q)$, then ${\mathscr{M}}_{0}(p,q)=\emptyset$.
\end{itemize}

In order to prove compactness of the moduli spaces of flow lines with cascades, we will need compactness both of the Morse and Floer trajectories forming the cascades. 
One result which we will repeatedly need below is Gromov compactness of the Floer trajectories, that is, convergence of the Floer trajectories in the in $C^{\infty}_{loc}$ sense.
Thanks to the bounds on the moduli spaces of Floer trajectories, the analogue of \cite[Prop. 3b]{Floer1989} holds, as one can prove by standard arguments (cf. for instance \cite[Thm. 6.5.4]{Audin2014}), after a suitable adaptation to the Rabinowitz Floer setting. The proof is even slightly simplified by the fact that our ambient symplectic manifold is exact, therefore bubbling and "cusps" can be excluded a priori.

\subsection{The "shade" of an action window}

In order to prove that the moduli spaces of cascades can be compactified, we will need the following result on moduli spaces of Floer trajectories.
The setting is as follows: we consider an exact symplectic manifold $(M,\omega)$ with a Hamiltonian $H$ and an $\omega$-compatible almost complex structure $J$. On the moduli spaces of Floer trajectories associated to $H$ and $J$ we define evaluation maps:
\begin{equation}
\ev^-(u):=\lim_{s\rightarrow -\infty} u(s,t)\quad \textrm{and}\quad \ev^+(u):=\lim_{s\rightarrow +\infty} u(s,t). \label{eqn:ev}
\end{equation}
For a fixed value $b\in \mathbb{R},\ b>0$, define
 $$
\tilde{K}(b):=\Big(\bigcup_{\substack{\Lambda \subseteq  {\crit}({\cA^{H}}),\\ {\cA^{H}}(\Lambda)\in (0,b]}}\ev^{-}(\mathscr{M}(\Sigma_0,\Lambda))\Big),
$$
where $\mathscr{M}(\Sigma_0,\Lambda)$ stands for the moduli space of the Floer trajectories between connected components of $\crit(\cA^H)$, namely $\Sigma_0= H^{-1}(0)\times\{0\}$ and $\Lambda$ (see Definition \ref{moduli}). The set $\tilde{K}(b)$ consists of limit points of trajectories between connected components of the critical set in the action window $(0,b]$ and $\Sigma_0$, and can be thought of as the \emph{shade} of moduli spaces with action within $0$ and $b$ on $\Sigma_0$.

\begin{figure}[!htb]
\begin{center}
\def\svgwidth{\columnwidth}
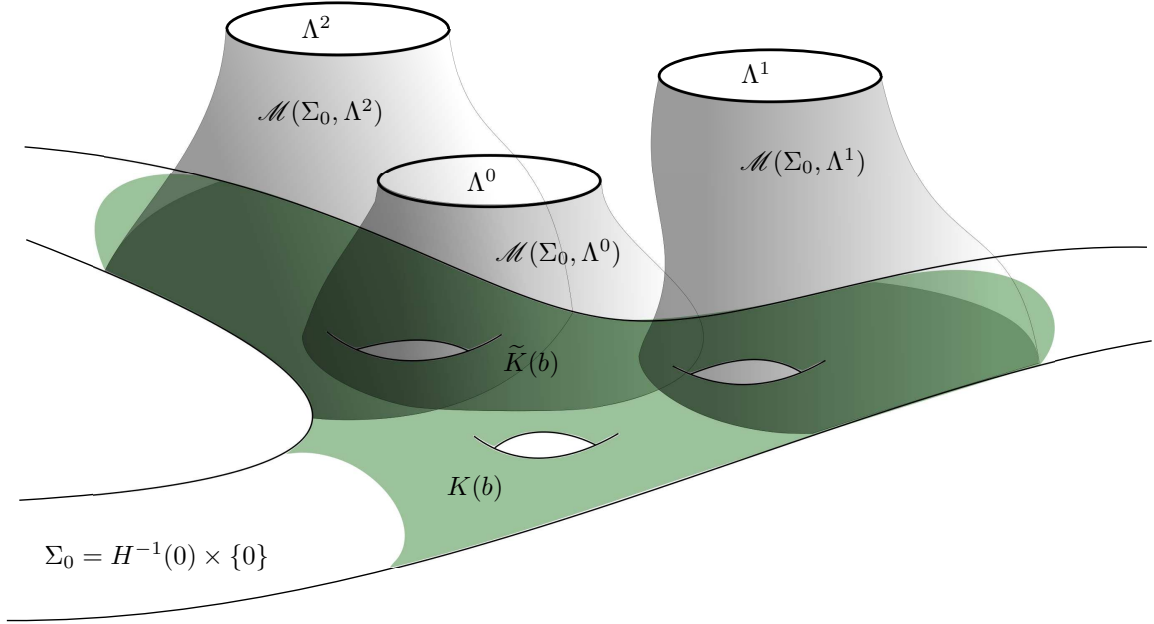
\end{center}
\caption{Construction of the sets $\tilde{K}(b)$ and $K(b)$.}
\label{fig:shade}
\end{figure}

\begin{lemma}\label{compact_shade}
Suppose the Hamiltonian $H$ satisfies the assumptions of Lemma \ref{lem:critVal} 
and admits an $\omega$-compatible almost complex structure $J\in \mathscr{J}^\infty(M,\omega)$, such that for every fixed action window, the moduli spaces of Floer trajectories between components of the critical set of $\cA^H$ within this action window are uniformly bounded in the $L^\infty$-norm.
Then for every $b>0$ the corresponding set $\tilde{K}(b)$ is a compact subset of $\Sigma_0$. In particular, it follows that for any coercive function $f$ on $\Sigma_0$ the set
\begin{equation}
K(b):= f^{-1}((-\infty,\max_{\tilde{K}(b)} f ])
\label{eqn:K(b)}
\end{equation}
is also a compact subset of $\Sigma_0$.
\end{lemma}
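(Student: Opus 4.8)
The statement has two parts: first that $\tilde{K}(b)$ is a compact subset of $\Sigma_0$, and second that $K(b)$ as defined in \eqref{eqn:K(b)} is compact. The second part is immediate once the first is established: since $f$ is coercive, every sublevel set $f^{-1}((-\infty,a])$ is compact by definition, and $a=\max_{\tilde K(b)}f$ is a finite real number precisely because $\tilde K(b)$ is compact (and nonempty — if it is empty the statement is vacuous). So the whole content is in showing $\tilde K(b)$ is compact.

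The plan is to show $\tilde K(b)$ is \emph{bounded} and \emph{closed} (and sits inside $\Sigma_0$, which is clear since $\ev^-$ of a Floer trajectory asymptotic to $\Sigma_0$ lands in $\Sigma_0$). For boundedness: by Lemma~\ref{lem:critVal}, under the stated hypotheses ${\critval}(\cA^H)$ is closed and discrete, so the action window $(0,b]$ contains only finitely many critical values, and hence (since the critical set is a disjoint union of finitely many components with bounded $\eta$ by Lemma~\ref{lem:etaInf}, combined with property \ref{PO} forcing the periodic orbits into a compact set) only finitely many connected components $\Lambda^1,\dots,\Lambda^N$ with $\cA^H(\Lambda^i)\in(0,b]$. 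Thus $\tilde K(b)=\bigcup_{i=1}^N\ev^-(\scrM(\Sigma_0,\Lambda^i))$ is a \emph{finite} union, and it suffices to bound each $\ev^-(\scrM(\Sigma_0,\Lambda^i))$. But the hypothesis gives exactly that the moduli space $\scrM(\Sigma_0,\Lambda^i)$ is uniformly bounded in $L^\infty$; the evaluation map $\ev^-$ only decreases the relevant sup-norm, so $\ev^-(\scrM(\Sigma_0,\Lambda^i))$ is bounded in $\Sigma_0\subset M$.

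For closedness: I would argue by taking a sequence $x_k\in\tilde K(b)$ with $x_k\to x$ in $M$. Passing to a subsequence, all $x_k$ lie in a single $\ev^-(\scrM(\Sigma_0,\Lambda^i))$ (finiteness of the union again), so $x_k=\ev^-(u_k)$ for Floer trajectories $u_k\in\scrM(\Sigma_0,\Lambda^i)$. The uniform $L^\infty$-bound together with Gromov compactness (the $C^\infty_{loc}$-convergence statement quoted at the end of Section~\ref{sec:compact}, i.e. the analogue of \cite[Prop.~3b]{Floer1989}) lets me extract a subsequence of $u_k$ converging in $C^\infty_{loc}$ to a (broken) Floer trajectory; one then needs the limit configuration to still have its negative asymptotic end on $\Sigma_0$ and its positive end landing in the action window $(0,b]$ (which is where the bound on the action and the discreteness of critical values from Lemma~\ref{lem:critVal} is used to prevent the energy from escaping), so that the negative-end evaluation of the limit equals $x$ and lies in $\tilde K(b)$. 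This last verification — that broken limits of trajectories from $\Sigma_0$ into the window $(0,b]$ are again trajectories from $\Sigma_0$ into the window, with matching $\ev^-$ — is the main technical obstacle: it is essentially a compactness/gluing-type bookkeeping argument for flow lines with cascades, relying on monotonicity of the action along cascades (so no piece can exit below $0$ or above $b$) and on the no-bubbling remark (exactness of $M$). Everything else is a routine assembly of the already-available ingredients.
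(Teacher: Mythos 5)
Your outline follows the same overall contour as the paper's proof (finitely many components in the action window via Lemma~\ref{lem:critVal}, $L^\infty$-bounds on the moduli spaces, Gromov compactness to extract a $C^\infty_{loc}$-convergent subsequence), but you handwave exactly at the step that is the genuine analytic content, and you mischaracterize its nature. The issue is \emph{not} a "bookkeeping argument for flow lines with cascades" --- cascades do not enter here at all, since $\mathscr{M}(\Sigma_0,\Lambda)$ consists of single Floer trajectories, not cascade configurations. The issue is that $C^\infty_{loc}$-convergence $u_k(s_k+\cdot)\to u$ on its own gives you no control whatsoever over the asymptotic behavior as $s\to-\infty$, so it does \emph{not} imply $\ev^-(u_k)\to\ev^-(u)$. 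Writing "so that the negative-end evaluation of the limit equals $x$" is asserting precisely the thing one has to prove.

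What the paper actually does to close this gap is invoke a \emph{uniform exponential decay estimate} (\cite[Thm. 25]{Fauck2016}), valid for all trajectories in $\mathscr{M}(\Sigma_0,\Lambda)$ and all in $\mathscr{M}(\Sigma_0,\widetilde\Lambda)$ because $\cA^H$ is Morse--Bott and the moduli spaces are $L^\infty$-bounded. This gives constants $\alpha,\delta>0$ depending only on $\Lambda$ (not on $k$) such that $\dist\bigl(u_k(s_k+s_0,t),\ev^-(u_k)\bigr)$ is uniformly small for $|s_0|$ large, and similarly for the limit $u$. Combining these tail estimates with the $C^\infty_{loc}$-convergence on the compact middle part via a $3\varepsilon$ argument gives $\ev^-(u_k)\to\ev^-(u)\in\tilde K(b)$. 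Without an estimate of this kind (or some equivalent --- e.g. compactness of $\mathscr{M}(\Sigma_0,\Lambda)$ in a weighted Sobolev topology that sees the ends) the closedness argument does not go through, and neither does yours.

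A minor additional point: you do not explicitly note that $\ev^-(u)$ is actually the right candidate for $x$, i.e. that no "piece" of a potential broken limit can slip in between $\Sigma_0$ and the limit trajectory $u$. In the paper's formulation this is absorbed into the statement that the chosen timeshifted subsequence already lands in some $\mathscr{M}(\Sigma_0,\widetilde\Lambda)$ (i.e. $\ev^-(u)\in\Sigma_0$), together with the uniform decay estimate which controls the negative tails of $u_k$ and $u$ simultaneously. Your claim that monotonicity of the action suffices for this is not the mechanism used, and it would not by itself deliver convergence of the evaluation maps.
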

In other words, the set $\tilde{K}(b)$ is obtained by evaluating the endpoints of moduli spaces with action between $0$ and $b$ onto hypersurface $\Sigma_0$, whereas the set $K(b)$ is the minimal sublevel set of $f$ containing $\tilde{K}(b)$: see Figure \ref{fig:shade}.
\begin{proof}
Let $\{x_{k}\}_{k\in \mathbb{N}}$ be a sequence in $\tilde{K}(b)$. By assumption the Hamiltonian $H$ satisfies the conditions of Lemma \ref{lem:critVal}, hence the number of connected components of
$$
{\crit}({\cA^{H}})\cap({\cA^{H}})^{-1}((0,b])
$$
is finite. Therefore, without loss of generality we can assume that there exist a connected component
$\Lambda \subseteq {\crit}({\cA^{H}})\cap({\cA^{H}})^{-1}((0,b])$
and a sequence $\{u_{k}\}_{k\in \mathbb{N}}$ corresponding to $\{x_{k}\}_{k\in \mathbb{N}}$, such that
$$
u_{k}  \in \mathscr{M}(\Sigma_0,\Lambda)\qquad\textrm{and} \qquad
x_{k}  = \ev^{-}(u_{k}).
$$
By assumption the image of $\mathscr{M}(\Sigma_0,\Lambda)$ is contained in a compact subset of $M\times\mathbb{R}$, which only depends on $b$. Therefore we can apply standard compactness arguments (see for example \cite[Prop. 6.6.2]{Audin2014}) to deduce that there exists a (possibly different) connected component 
$\widetilde{\Lambda} \subseteq {\crit}({\cA^{H}})\cap({\cA^{H}})^{-1}((0,b])$ and a sequence $s_{k}\in\mathbb{R}$, which admits a subsequnce convergin in the $C^{\infty}_{loc}$ sense:
$$
u_{k}(s_{k}+\cdot) \xrightarrow[k\to \infty]{C^{\infty}_{loc}} u \in \mathscr{M}(\Sigma_0,\widetilde{\Lambda}).
$$

Moreover, by the bounds on the moduli spaces of Floer trajectories, $\ev^{-}(\mathscr{M}(\Sigma_0,\Lambda)$ is contained in a compact subset of $\Sigma$. By assumption, ${\cA^{H}}$ is Morse-Bott and the moduli spaces $\mathscr{M}(\Sigma_0,\Lambda)$ and $\mathscr{M}(\Sigma_0,\widetilde{\Lambda})$ are bounded in the $L^\infty$-norm,
hence by \cite[Thm. 25]{Fauck2016}, there exist $\alpha,\delta, \tilde{\alpha},\tilde{\delta}>0$ corresponding to $\Lambda,\widetilde{\Lambda}$ respectively, such that for every $\varepsilon>0$ we can choose $s_{0}\in\mathbb{R}$ big enough to satisfy
\begin{align*}
\dist(u_{k}(s_{k}+s_{0},t), \ev^{-}(u_{k})) & \leq \alpha e^{-\delta s_{0}}< \frac{\varepsilon}{3}\qquad\forall\ k\in \mathbb{N},\\
\dist(u(s_{0},t), \ev^{-}(u)) & \leq \tilde{\alpha} e^{-\tilde{\delta} s_{0}}< \frac{\varepsilon}{3}.
\end{align*}
Now, since $u_{k}(s_{k}+\cdot)  \xrightarrow[k\to \infty]{C^{\infty}_{loc}} u $, there exists $N\in \mathbb{N}$, such that for all $k\geq N$
$$
\dist(u_{k}(s_{k}+s_{0},t), u(s_{0},t)) < \frac{\varepsilon}{3}.
$$
Combining the above inequalities, we obtain that for every $\varepsilon>0$ there exists $N\in \mathbb{N}$, such that for all $k\geq N$
$$
\dist(\ev^{-}(u), \ev^{-}(u_{k}))<\varepsilon.
$$
This ensures that $\{x_{k}\}_{k\in \mathbb{N}}$ has a converging subsequence and the limit is in fact in $\tilde{K}(b)$, which proves the first claim. The compactness of $K(b)$ follows directly from its definition and the coerciveness of $f$.
\end{proof}

\subsection{Compactness of the moduli spaces of cascades}
We are now ready to prove that in the presence of $L^\infty$-bounds on the moduli spaces of Floer trajectories (as we obtained for tentacular Hamiltonians), the moduli spaces of cascades can be compactified by considering broken flow lines with cascades: these flow lines and the corresponding notion of Floer-Gromov convergence are defined in \cite[Def A.5, Def. A.9]{Frauenfelder2004}.

\begin{theorem}
Suppose the Hamiltonian $H$ satisfies the assumptions of Lemma \ref{compact_shade} and let $(f,g)$ be a Morse-Smale pair on ${\crit}(\cA^H)$, such that $f$ is coercive on $\Sigma$.
Fix $p,q\in {\crit}(f)$ and let $\{\mathbf{u}^{n}\}_{n\in \mathbb{N}}$ be a sequence of flow lines with cascades in the moduli space ${\mathscr{M}}(p,q)$. Then there exist a subsequence $\{\mathbf{u}^{n_{j}}\}_{j\in \mathbb{N}}$ and a broken flow line with cascades $\{\mathbf{w}_{j}\}_{j=1}^{l}$, such that $\{\mathbf{u}^{n_{j}}\}_{j\in \mathbb{N}}$ Floer-Gromov converges to $\{\mathbf{w}_{j}\}_{j=1}^{l}$.
\label{twr:compcas}
\end{theorem}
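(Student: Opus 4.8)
The plan is to follow the standard Floer–Gromov compactness scheme for flow lines with cascades, as in \cite[Appendix A]{Frauenfelder2004}, making systematic use of the compact set $K(b)$ produced by Lemma \ref{compact_shade} to confine all the objects involved. First I would set $b := |\cA^H(p)|$ (or $\max\{|\cA^H(p)|,|\cA^H(q)|\}$, using that the action is nonincreasing along the negative flow and increasing along cascades, so every cascade trajectory in $\mathbf{u}^n$ has action in a fixed window $[\cA^H(q),\cA^H(p)]\subseteq(-\infty,b]$). By Lemma \ref{lem:critVal} the set ${\crit}(\cA^H)\cap(\cA^H)^{-1}([\cA^H(q),\cA^H(p)])$ has only finitely many connected components $\Lambda^0=\Sigma_0,\Lambda^1,\dots,\Lambda^N$, so after passing to a subsequence the number $m_n$ of cascades in $\mathbf{u}^n$ is constant, say $m$, and the ordered tuple of components visited by the Floer pieces of $\mathbf{u}^n$ is constant. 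This discretizes the problem: the moduli space ${\mathscr M}(p,q)$ decomposes into finitely many ``combinatorial types'' and it suffices to extract a convergent subsequence within one type.

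Next I would treat the two kinds of pieces. The Floer pieces are $L^\infty$-bounded by hypothesis (the moduli spaces $\mathscr{M}(\Lambda^i,\Lambda^{i+1})$ with action in $(-\infty,b]$ lie in a compact subset of $M\times\R$ depending only on $b$), so Gromov compactness for Rabinowitz Floer trajectories — the exactness of $(M,\omega)$ rules out bubbling and cusps, as noted after Lemma \ref{lem:etaInf} — gives $C^\infty_{\mathrm{loc}}$-convergence to a (possibly broken) Floer trajectory between components of the same action window, with at most finitely many break points because of the discreteness of ${\critval}(\cA^H)$. The Morse pieces (zero-cascade segments, and the half-infinite segments from $p$ and to $q$) live in $\crit(f)$, but to keep them in a compact region I use that $f$ is coercive on $\Sigma$ and that, by Lemma \ref{compact_shade}, all the relevant Floer endpoints land in the compact set $\tilde K(b)\subseteq K(b)=f^{-1}((-\infty,\max_{\tilde K(b)}f])$; since the Morse flow of a coercive $f$ is ``downward'' in $f$, a Morse trajectory joining two points of $K(b)$ stays in $K(b)$, so Arzelà–Ascoli (together with Morse-theoretic compactness of broken gradient flow lines on the compact manifold $K(b)$, as in \cite{schwarz1993}) yields convergence to a broken Morse flow line with endpoints still in $K(b)$. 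The time-shift parameters $(s^n_1,\dots,s^n_m)\in\R^m$ and the Morse times are handled exactly as in \cite{Frauenfelder2004}: if a difference of consecutive shift parameters stays bounded the two Floer cascades glue into one in the limit (lowering the cascade count), if it diverges a new Morse flow line with cascades is born, and if a shift parameter itself escapes to $\pm\infty$ the cascade degenerates to a Morse piece; in every case the limit is a \emph{broken} flow line with cascades $\{\mathbf{w}_j\}_{j=1}^l$ in the sense of \cite[Def.~A.5, Def.~A.9]{Frauenfelder2004}.

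Finally I would assemble these pieces into a single broken flow line with cascades and verify the defining conditions: consecutive components match up (a limit of trajectories from $\Lambda^i$ to $\Lambda^j$ has endpoints on $\Lambda^i$ and $\Lambda^j$, using the exponential-convergence estimates of \cite[Thm.~25]{Fauck2016} exactly as in the proof of Lemma \ref{compact_shade} to show that the $\ev^\pm$ of the limit are the limits of the $\ev^\pm$), the actions are monotone, and no energy is lost — again by exactness. The main obstacle I anticipate is the bookkeeping at the \emph{interfaces} between a Floer cascade and the adjacent Morse segments: one has to show that the $C^\infty_{\mathrm{loc}}$-limit of the Floer pieces, the $C^\infty$-limit of the Morse pieces, and the limits of the gluing parameters are mutually compatible — i.e. that the broken Morse flow line really does connect to the broken Floer cascade at a common point of $\crit(f)$ — and to rule out the pathological scenario where a Floer trajectory's energy concentrates near $s=\pm\infty$ so that mass escapes into an extra, invisible Morse segment. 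This is controlled precisely by the uniform exponential-decay estimates near the Morse-Bott critical manifolds (valid here because $\cA^H$ is Morse-Bott and the Floer moduli spaces are $L^\infty$-bounded), which force any such concentration to be captured by a genuine new piece of the broken configuration rather than lost; everything else is a routine, if lengthy, adaptation of the compact-case argument in \cite{Frauenfelder2004} and \cite{Audin2014} to our setting, with $K(b)$ playing the role of the ambient compact manifold.
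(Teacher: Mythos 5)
Your proposal follows the paper's strategy exactly: extract a subsequence of fixed combinatorial type, obtain Floer-piece compactness from the uniform $L^\infty$-bounds on moduli spaces together with exactness of $(M,\omega)$ (no bubbling), confine the Morse pieces on $\Sigma$ to the compact set $K(b)$ from Lemma \ref{compact_shade} using coercivity of $f$, and finally invoke the cascade-compactness machinery of \cite{Frauenfelder2004}. Watch a few orientation slips that the paper is careful about: the Morse flow here is the \emph{positive} gradient flow of $f$ (so $f$ increases, not decreases, and the action window is $[\cA^H(p),\cA^H(q)]$), and $\tilde K(b)$ only contains the $\ev^-$ endpoints of Floer pieces \emph{leaving} $\Sigma_0$ — the incoming endpoints are captured by this monotonicity, while the terminal segment in the case $\cA^H(q)=0$ must be confined by the separate sublevel set $f^{-1}((-\infty,f(q)])$ since $q$ need not lie in $K(b)$, which is precisely why the paper treats $b=0$ and $0\in[a,b)$ as distinct cases.
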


\begin{proof}
The scheme of the proof is the same as in the compact case: the compactness of the Floer trajectories follows from uniform bounds on the moduli spaces and standard compactness arguments (see \cite[Prop. 6.6.2]{Audin2014} and Section 4.2 in \cite{bourgeois2009}). Compactness of the spaces of Morse trajectories on the critical set and in particular on the non-compact hypersurface $\Sigma$ needs both the uniform bounds on the moduli spaces of Floer trajectories and the coercivity of $f$.

Let ${\cA^{H}}(p)=a$ and ${\cA^{H}}(q)=b$.
First observe that without loss of generality we can assume that all the connected components of ${\crit}({\cA^{H}})$ with the only exception of $\Sigma$ are compact, hence the restriction of the Morse flow to those components is compact up to breaking (cf. \cite[Lem. 2.38]{schwarz1993}). Therefore we only have to consider the case where $0\in [a,b]$ and show that the gradient flow of $f$ on $\Sigma$, which is part of the flowlines with cascades, is in fact restricted to compact sets.

We consider two cases:
\begin{enumerate}
\item $b=0$: then the Morse parts of the cascades in ${\mathscr{M}}(p,q)$ on $\Sigma$ are contained in $W^{s}_f(q)$. The set $W^{s}_f(q)$ is a subset of $f^{-1}((-\infty,f(q)])$, which by coercivity of $f$ is a compact subset of $\Sigma_0$. 
\item $0\in [a,b)$: then by definition of $K(b)$ in (\ref{eqn:K(b)}) the Morse components of the cascades in ${\mathscr{M}}(p,q)$ passing through $\Sigma$ are contained in $K(b)$, which by Lemma \ref{compact_shade} is compact.
\end{enumerate}

Knowing that the Morse parts of the flow lines with cascades are confined to compact subsets of $\Sigma$, we can apply the same arguments as in the compact case \cite{CieliebakFrauenfelder2009} to infer compactness of the Morse trajectories. Having established compactness both of the Morse trajectories and of the Floer trajectories, we can now directly apply the methods from \cite{Frauenfelder2004}, Appendix C.2, to show the compactness of the cascades.

\end{proof}
\section{Transversality}

The aim of this section is to prove that moduli spaces of flow lines with cascades form smooth manifolds. In order to achieve this, we first consider spaces of Floer trajectories between different components of the critical set of $\cA^H$. The following remark is in place: in the rest of this section, we will assume that the exact symplectic manifold we consider satisfies the condition $c_1(M)=0$, which guarantees that the Conley-Zehnder index is integer-valued for all the periodic orbits and therefore the Rabinowitz Floer homology is defined with integer grading. This case suffices for us, since in the end we are going to concentrate on a class of Hamiltonians defined on the standard symplectic space $(\mathbb{R}^{2n},\omega_0)$, which obviously satisfies the condition. 
If $c_1(M)|_{\pi_2(M)}\neq 0$, one can still define Rabinowitz Floer homology as a $\mathbb{Z}_2$-graded invariant. For a discussion of the grading in that case we refer the reader to \cite{hofer1995}.
 
\begin{theorem}
Consider a smooth Hamiltonian $H$ on an exact symplectic manifold $(M,\omega)$, such that $\Sigma=H^{-1}(0)$ is a regular level set and a smooth hypersurface of exact contact type. Assume $H$ satisfies \ref{PO} and \ref{MB} and there exists an open subset $\mathcal{V}$ of $M\times \mathbb{R}$, which contains the images of all the non-degenerate critical points of $\cA^H$.
Additionally, assume that for any $J\in \mathscr{J}(M,\omega,\mathcal{V})$ and any pair $\Lambda^{-}$ and $\Lambda^{+}$ of connected components of $\crit(\cA^H)$ the associated moduli space of Floer trajectories is bounded in $L^\infty$-norm.

Then for a generic choice of $J \in \mathscr{J}(M,\omega,\mathcal{V})$ and every pair of connected components without boundary $\Lambda^{-},\Lambda^{+}\subseteq {\crit}({\cA^{H}})$ the associated moduli space $\mathscr{M}_{J,H}(\Lambda^{-},\Lambda^{+})$ is a smooth manifold without boundary of dimension
$$
\dim\left(\mathscr{M}_{J,H}(\Lambda^{-},\Lambda^{+})\right)= \mu_{CZ}(\Lambda^{+})-\mu_{CZ}(\Lambda^{-})+\frac{1}{2}( \dim(\Lambda^{-})+\dim(\Lambda^{+})).
$$
\label{twr:trasvers}
\end{theorem}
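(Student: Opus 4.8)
The proof follows the standard Floer-theoretic transversality scheme: set up a Banach manifold of maps and a Banach bundle over it, exhibit the Rabinowitz Floer operator as a section whose zero set is the relevant moduli space, show that a \emph{universal} version of this section (where one also varies $J$) is transverse to the zero section, and then invoke the Sard--Smale theorem to conclude that transversality holds for a generic (comeager) set of almost complex structures. The dimension formula then comes from the Fredholm index computation together with the gluing of the finite-dimensional factors coming from the Morse-Bott manifolds $\Lambda^{\pm}$.

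\textbf{Step-by-step.} First I would fix the functional-analytic setup: for connected components $\Lambda^{\pm}$ without boundary, introduce the space $\mathcal{B}^{1,p}(\Lambda^-,\Lambda^+)$ of $W^{1,p}_{\mathrm{loc}}$ maps $u=(v,\eta)\colon \R\times S^1\to M\times\R$ which converge in $W^{1,p}$-sense to elements of $\Lambda^{\pm}$ as $s\to\pm\infty$ (with exponential weights chosen in accordance with the spectral gap coming from Lemma \ref{lem:MBtype} and the Morse-Bott property \ref{MB}), together with the Banach bundle $\mathcal{E}\to\mathcal{B}^{1,p}$ whose fibre at $u$ is $L^p(u^*T(M\times\R))$. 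The section is $\mathcal{F}_J(u)=\partial_s u-\nabla_J\cA^H(u)$; its zero set, intersected with the subset of maps satisfying the energy bound \eqref{bddcond}, is precisely $\mathscr{M}_{J,H}(\Lambda^-,\Lambda^+)$ — here one uses that the $L^\infty$-bounds assumed in the hypothesis, together with elliptic bootstrapping, force finite-energy solutions to be smooth and asymptotically convergent, so the weighted-Sobolev description is legitimate and the moduli space is unchanged. Next I would verify that the vertical linearization $D\mathcal{F}_J(u)$ at a solution is a Fredholm operator: this is the content of the standard Cauchy--Riemann-type analysis for the operator in \eqref{HessAH}, with the Fredholm index equal to the difference of Conley--Zehnder-type indices of the asymptotic operators at $\pm\infty$, adjusted by the dimensions of the Morse-Bott manifolds; spelling this out gives
\[
\mathrm{ind}\, D\mathcal{F}_J(u)=\mu_{CZ}(\Lambda^+)-\mu_{CZ}(\Lambda^-)+\tfrac12\bigl(\dim\Lambda^-+\dim\Lambda^+\bigr).
\]
Then comes the core step: form the universal section $\mathcal{F}(u,J)=\partial_s u-\nabla_J\cA^H(u)$ over $\mathcal{B}^{1,p}\times\mathscr{J}(M,\omega,\mathcal{V})$ (using a Banach manifold of $C^\varepsilon$ or $C^\infty$-Floer-type perturbations of $J$ supported in $\mathcal{V}$) and prove that $D\mathcal{F}(u,J)$ is surjective at every zero. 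By the usual argument one reduces this to showing that if a nonzero element $\zeta\in L^q$ of the cokernel annihilates the image of the $J$-variation, then $\zeta$ vanishes on an open set and hence everywhere by unique continuation; the $J$-variation term has the form $\int \langle \zeta, (\delta J)(\cdots)\partial_t v\rangle$, so one needs a point $(s_0,t_0)$ where $\partial_t v-\eta X_H(v)\neq 0$ and the map is injective/somewhere-injective near that point — this is where the hypothesis that $\mathcal{V}$ \emph{contains the images of all non-degenerate critical points}, i.e.\ that we only perturb $J$ where it matters, is used to keep the perturbation class nonempty while still achieving genericity. A key preliminary observation is that any Floer trajectory between distinct components is non-stationary, so such a point $(s_0,t_0)$ exists; somewhere-injectivity follows from the standard regularity/unique-continuation argument applied to the $v$-component. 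Surjectivity of the universal operator makes $\mathscr{M}(\Lambda^-,\Lambda^+):=\mathcal{F}^{-1}(0)$ a Banach manifold, the projection to the $J$-factor is Fredholm of the index above, and Sard--Smale yields a comeager set of regular values $J$; a standard intersection-over-countably-many-pairs-and-index-windows argument (together with a Taubes-type trick to pass from $C^\varepsilon$ to $C^\infty$) promotes this to a comeager set inside $\mathscr{J}(M,\omega,\mathcal{V})$, and for such $J$ each $\mathscr{M}_{J,H}(\Lambda^-,\Lambda^+)$ is a smooth manifold of the asserted dimension. Since $\Lambda^{\pm}$ are assumed without boundary and the asymptotic operators are surjective-after-correction, the moduli space has no boundary.

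\textbf{The main obstacle.} The delicate point is the somewhere-injectivity/unique-continuation argument needed for surjectivity of the universal linearized operator, carried out in the Rabinowitz setting where the equation has the extra Lagrange-multiplier component $\eta$ and the almost complex structures are allowed to depend on $(\eta,t)$ but are constrained to agree with $J_0$ outside $\mathcal{V}$. One must argue that the finite-dimensional $\eta$-direction does not obstruct the perturbation scheme (it contributes only the scalar equation $\int_0^1 H(v)$, which is handled separately), and that restricting perturbations of $J$ to $\mathcal{V}$ is still enough: this requires knowing that every Floer trajectory between distinct components must pass through $\mathcal{V}$ — which holds because its asymptotic limits are non-degenerate critical points, whose images lie in $\mathcal{V}$ by hypothesis — so that the injectivity point $(s_0,t_0)$ can be taken with $v(s_0)\in\mathcal{V}$. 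The remaining ingredients — Fredholm theory, the index computation via spectral flow, Sard--Smale, and the Taubes $C^\infty$-genericity trick — are by now routine and can be cited from \cite{Audin2014} and \cite{CieliebakFrauenfelder2009}, with the $L^\infty$-bounds from the hypothesis (and ultimately from \cite{pasquotto2017}) supplying the compactness needed to make the weighted Sobolev setup and the exponential-decay asymptotics valid in the non-compact setting.
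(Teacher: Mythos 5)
Your proposal is correct and follows essentially the same scheme as the paper's proof: weighted Sobolev Banach manifold and bundle, universal section $S(u,J)=\overline{\partial}_J(u)$, surjectivity of the linearization via somewhere-injectivity and Aronszajn unique continuation (using the hypothesis that $\mathcal{V}$ contains the images of the non-degenerate critical points so that trajectories pass through the region where $J$ may be perturbed), Sard--Smale plus a countable intersection over pairs of components (justified in the paper by Lemma \ref{lem:critVal}) and the Taubes trick, and finally the Conley--Zehnder/Fredholm index computation via \cite[Prop.\ 4.1]{CieliebakFrauenfelder2009}. The only cosmetic difference is that the paper organizes the somewhere-injectivity step through the explicit open dense sets $\mathcal{R}(u)\subset\mathcal{C}(v)$ and $\Omega(u)$ following \cite{AbbonMerry2014}, but the underlying argument is the one you describe.
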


\begin{proof}
The proof of this theorem is a direct adaptation of existing results to our framework. 
One first needs to show that there exists a residual subset $\mathscr{J}_{reg}(H,\mathcal{V})\subset \mathscr{J}(M,\omega,\mathcal{V})$ such that if $J\in \mathscr{J}_{reg}(H,\mathcal{V})$, then for every pair $\Lambda^+,\Lambda^-$ of components of ${\crit}({\cA^{H}})$, the spaces $\mathscr{M}_{J,H}(\Lambda^{-},\Lambda^{+})$ are all smooth manifolds.  

In order to be able to apply the implicit function theorem, we need to embed $\mathscr{M}_{J,H}(\Lambda^{-},\Lambda^{+})$ in a Banach manifold. Fix $p>2$. The following identity is proved in \cite[Thm. 25]{Fauck2016}: 
\begin{equation}
\mathscr{M}_{J,H}(\Lambda^{-},\Lambda^{+})=\left\lbrace u\in \mathscr{B}_{\delta}(\Lambda^{-},\Lambda^{+}) \ \big|\  \partial_{s} u =\nabla_{J}{\cA^{H}}(u)\right\rbrace, 
\label{eqn:MeqBnabA}
\end{equation}
where $\mathscr{B}_{\delta}(\Lambda^{-},\Lambda^{+})$ denotes the Banach manifold 
\[
\mathscr{B}_{\delta}(\Lambda^{-},\Lambda^{+}):= 
\left\{
\begin{array}{c|c}
 & \exists\ (x^{-},x^{+}) \in \Lambda^{-}\times\Lambda^{+}\\
u\in W^{1,p}_{loc}(\mathbb{R}\times S^{1}, M)\times W^{1,p}_{loc}(\mathbb{R}) & {\rm dist}(u(t,s),x^{\pm}(t)) \leq \alpha e^{\mp\delta s}  \\
& |\partial_{s}u(s,t)| \leq \alpha e^{-\delta|s|}
\end{array} \right\}.
\]
Define a Banach bundle $\mathscr{E}_{\delta}\to \mathscr{B}_{\delta}(\Lambda^{-},\Lambda^{+})$ by requiring the fiber over $u\in \mathscr{B}_{\delta}(\Lambda^{-},\Lambda^{+})$ to be
\[
\big(\mathscr{E}_{\delta}\big)_{u}:= L^{p}_{\delta}(\mathbb{R}\times S^{1}, u^{*}(TM\times \mathbb{R})).
\]
For $l\in \mathbb{N}$, let $\mathscr{J}^{l}(M,\omega,\mathcal{V})$ be defined in the same way as $\mathscr{J}(M,\omega,\mathcal{V})$, with the only difference that we require $J$ to be of class $C^{l}$ instead of class $C^{\infty}$. 
For a fixed $J\in \mathscr{J}^{l}(M,\omega,\mathcal{V})$, we consider the section $\overline{\partial}_{J} : \mathscr{B}_{\delta}(\Lambda^{-},\Lambda^{+})  \to \mathscr{E}_{\delta}$ defined
$$
\overline{\partial}_{J}(v,\eta)  := 
\left(
\begin{array}{c}
\partial_{s}v + J_{t}(v,\eta)(\partial_{t}v-\eta X_{H}(v))\\
\partial_{s}\eta +\int_{S^{1}}H(v)dt
\end{array} \right),
$$
and extend it to a section
\[
S : \mathscr{B}_{\delta}(\Lambda^{-},\Lambda^{+})  \times \mathscr{J}^{l}(M,\omega,\mathcal{V}) \to \mathscr{E}_{\delta},\qquad
S(u,J)=\overline{\partial}_{J}(u).
\]
One needs to prove that for $(u,J)$ in $S^{-1}(0)$, the vertical derivative 
\[
D_{(u,J)}S: W^{1,p}_{\delta}(\mathbb{R}\times S^{1}, u^{*}(TM \times \mathbb{R}))\times T_{J}\mathscr{J}^{l}(M,\omega,\mathcal{V}) \to \mathscr{E}_{\delta},
\]
is surjective or, equivalently, any element of the dual space $\big(\mathscr{E}_{\delta}\big)_{u}^{*}$ vanishing on the image of $D_{(u,J)}S$ is, in fact, $0$.

Following the proofs of Theorem 4.10 and 4.11 in \cite{AbbonMerry2014}, we introduce the following sets for a Floer trajectory $u\in \mathscr{M}_{J,H}(\Lambda^-,\Lambda^+),\ u=(v,\eta)$:
\[
\mathcal{R}(u) :=\left\lbrace\begin{array}{c|c}
& \partial_s v(s,t)\neq 0, \quad \partial_s\eta(s)\neq 0,\\
(s,t)\in \R\times S^1 & u(s,t)\neq \lim_{s\to\pm}u(s,t),\\
& u(s,t)\neq u(s',t)\quad \forall\ s'\in \R\setminus\{s\}.
\end{array}
 \right\rbrace,
 \]
 \[
\Omega(u) := \left\lbrace\begin{array}{c|c}
(s,t)\in \mathcal{R}(u) & v(s,t) \notin {\crit}(H),\quad v(s,t)\in \mathcal{V}
\end{array}
\right\rbrace.
\]

We would like to show that $\Omega(u)$ is open and non-empty. Without loss of generality we can assume ${\cA^{H}}(\Lambda^{+})\neq 0$. 
By assumption, $\mathcal{V}$ contains all the non-degenerate critical points of $\cA^H$, thus
\[
\lim_{s\to+\infty}u(s)\subseteq \mathcal{V}\cap (\Sigma \times \mathbb{R}).
\]
By assumption, $\mathcal{V}$ is open and $\Sigma$ is a regular level set of $H$, hence
$u^{-1}(\mathcal{V}\setminus ({\crit}(H)\times\mathbb{R}))$
is open and non-empty. On the other hand, by \cite[Thm. 4.10]{AbbonMerry2014}, the set $\mathcal{R}(u)$ is an open and dense subset of 
\[
\mathcal{C}(v):=\{(s,t)\in\mathbb{R}\times S^{1}\ |\ \partial_{s}v(s,t)\neq 0\}.
\]
Observe that $\mathcal{C}(v)$ is open, as a result of the continuity of $\partial_{s}v$. Therefore, to show that $\Omega(u)$ is open and non-empty, it suffices to show that
\begin{equation}
 u^{-1}(\mathcal{V}\setminus ({\crit}(H)\times\mathbb{R})) \cap \mathcal{C}(v)\neq \emptyset.
\label{eqn:uC(v)}
\end{equation}
Since $(v,\eta)$ is a non-constant solution of the Rabinowitz Floer equation, we can apply a generalized version of Aronszajn’s unique continuation theorem \cite[Prop. 3.3]{BourOanc2010}, and conclude that the interior of the complement of $\mathcal{C}(v)$ in $\mathbb{R}\times S^{1}$ is empty. It follows that $\mathcal{C}(v)$ is dense in $\mathbb{R}\times S^{1}$ and therefore (\ref{eqn:uC(v)}) is satisfied. Combining (\ref{eqn:uC(v)}) with the fact that $\mathcal{R}(u)$ is open and dense in $\mathcal{C}(v)$, we can finally prove that $\Omega(u)$ is open and non-empty, as claimed.

Summarizing: since every element $\rho$ of the dual space $(\mathscr{E}_{\delta})_{u}^{*}$ vanishing on the image of $D_{(u,J)}S$ is in the kernel of a first order elliptic operator and vanishes on a non-empty, open set $\Omega(u)$, we conclude (once more by Aronszajn’s unique continuation theorem) that $\rho=0$ everywhere, thus proving that $D_{(u,J)}S$ is surjective for all $(u,J)\in S^{-1}(0)$ and $S^{-1}(0)$ is a smooth manifold of class $C^{l}$. Note that all the connected components $\Lambda^{-},\Lambda^{+}\subseteq {\crit}({\cA^{H}})$  are open sets, therefore $S^{-1}(0)$ is in fact a smooth manifold without boundary.

Denote the set of regular almost complex structures by
$$
\mathscr{J}^{l}_{reg}(\Lambda^{-},\Lambda^{+}):=\Big\{ J\in \mathscr{J}^{l}(M,\omega,\mathcal{V})\ \Big|\ (\mathscr{B}_{\delta}(\Lambda^{-},\Lambda^{+}) \times\{J\} )\cap S^{-1}(0) \textrm{ is a } C^{l} \textrm{ manifold}\Big\}.
$$
Since $S^{-1}(0)$ is a manifold of class $C^{l}$, the set of regular values of the projection
$$
\pi: S^{-1}(0) \to \mathscr{J}^{l}(M,\omega,\mathcal{V}),
$$
is in fact equal to $\mathscr{J}^{l}_{reg}(\Lambda^{-},\Lambda^{+})$. By Sard's theorem (see \cite[Thm. A.5.1]{McDuff.Salamon.2012}), $\mathscr{J}^{l}_{reg}(\Lambda^{-},\Lambda^{+})$ is residual in $\mathscr{J}^{l}(M,\omega,\mathcal{V})$ with respect to the $C^{l}$-topology.

By assumption $\Sigma$ is of contact type and $H$ satisfies \ref{PO+} and \ref{MB}, hence by Lemma \ref{lem:critVal} the set of connected components of ${\crit}({\cA^{H}})$ is countable. Therefore, if we consider the intersection over the set of all connected components of ${\crit}({\cA^{H}})$, we obtain that
$$
\mathscr{J}_{reg}^{l}(H,\mathcal{V}) := \bigcap_{\Lambda^{-},\Lambda^{+} \subseteq {\crit}({\cA^{H}})}
 \mathscr{J}_{reg}^{l}(\Lambda^{-},\Lambda^{+}),
$$
is a countable intersection of residual sets, and thus a residual set in $\mathscr{J}^{l}(M,\omega,\mathcal{V})$. Finally, one can get a result in the smooth category, by using Taubes' trick to show that the intersection over all $l\in \mathbb{N}$
$$
\mathscr{J}_{reg}(H,\mathcal{V}):=\bigcap_{l\in\mathbb{N}} \mathscr{J}^{l}_{reg}(H,\mathcal{V}),
$$
is in fact residual in $\mathscr{J}(M,\omega,\mathcal{V})$. 

The Conley-Zehnder index is invariant under homotopies, hence it is constant on the connected components of ${\crit}({\cA^{H}})$. Let $J\in \mathscr{J}_{reg}(H,\mathcal{V})$ and let $\Lambda^{-},\Lambda^{+}$ be a pair of connected components of ${\crit}({\cA^{H}})$. Choose $(v^{\pm},\eta^{\pm})\in\Lambda^{\pm}$ with cappings $\bar{v}^{\pm}\in C^{\infty}(D,M)$, such that $\bar{v}^{\pm}(e^{i\pi t})=v^{\pm}(t)$ and let $u=(v,\eta)$ be a Floer trajectory such that $\lim_{s\to\pm\infty}u(s)=(v^{\pm},\eta^{\pm})$. Then by \cite[Prop. 4.1]{CieliebakFrauenfelder2009} the local virtual dimension of the moduli space at $u$ can be expressed in terms of the Conley-Zehnder indices of the endpoints and the first Chern class of the sphere obtained by capping the cylinder $v$ with $\bar{v}^{-}$ and $\bar{v}^{+}$. More precisely the local dimension of the moduli space is equal to
$$
\mu_{CZ}(v^{+},\eta^{+})-\mu_{CZ}(v^{-},\eta^{-})+\tfrac{1}{2}( \dim(\Lambda^{-})+\dim(\Lambda^{+}))+c_{1}\left(\bar{v}^{-}\# v\# \bar{v}^{+}\right).
$$
If $c_{1}(M)|_{\pi_{2}(M)}=0$, the last term disappears, the Conley-Zehnder index does not depend on the capping or the choice of $(v^{\pm},\eta^{\pm})\in\Lambda^{\pm}$, hence we obtain that the dimension of the moduli space of Floer trajectories between $\Lambda^{-}$ and $\Lambda^{+}$ is
\begin{equation}
\dim(\mathscr{M}_{J,H}(\Lambda^{-},\Lambda^{+})) = \mu_{CZ}(\Lambda^{+})-\mu_{CZ}(\Lambda^{-})+\tfrac{1}{2}( \dim(\Lambda^{-})+\dim(\Lambda^{+})).
\label{eqn:dimM}
\end{equation}
\end{proof}

We now concentrate on the smooth structure of moduli spaces of flow lines with cascades.
We assume that $f$ is a Morse function on the critical manifold of $\cA^H$, and we introduce a grading on ${\crit}(f)$ in the following way.
Given $x\in {\crit}(f)$, let $\mu_{\sigma}(x)$ be the signature index with respect to $f$, namely
\begin{equation}
\mu_{\sigma}(x):=\frac{1}{2}(\textrm{dim}(W^s_f(x))-\textrm{dim}(W^u_f(x))),
\label{eqn:muSig}
\end{equation}
where $W^s_f$ and $W^u_f$ are the stable and unstable manifolds at $x$ with respect to the flow of $\nabla f$.
Let $\mu_{CZ}(x)$ be the Conley-Zehnder index with respect to ${\cA^{H}}$. We define the grading on the Rabinowitz Floer complex by 
\begin{equation}
\mu(x):=\mu_{\sigma}(x)+\mu_{CZ}(x) +\tfrac{1}{2}.
\label{muRF}
\end{equation}

\begin{proposition}
Consider a smooth Hamiltonian $H$ on an exact symplectic manifold $(M,\omega)$, such that the level set $\Sigma$ is a smooth hypersurface of exact contact type. Additionally, assume that $H$ satisfies \ref{PO} and \ref{MB} and there exists $J\in\mathscr{J}(M,\omega)$, such that for every pair of connected submanifolds without boundary $\Lambda^{-},\Lambda^{+}\subseteq {\crit}({\cA^{H}})$ the associated moduli space $\mathscr{M}_{J,H}(\Lambda^{-},\Lambda^{+})$ is a smooth manifold without boundary, whose image in $M\times \mathbb{R}$ is bounded.

Then for a Morse-Smale pair $(f,g)$ on ${\crit}({\cA^{H}})$ the associated moduli space ${\mathscr{M}}(x^{-},x^{+})$ of flowlines with cascades is a smooth, finite dimensional manifold without boundary. Its dimension is given by 
\[
\dim({\mathscr{M}}(x^{-},x^{+}))= \mu (x^{+})-\mu (x^{-})-1.
\]
\label{prop:casmfld}
\end{proposition}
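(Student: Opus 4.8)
The plan is to assemble the dimension formula from the pieces already established and to obtain the manifold structure by a standard gluing argument adapted to the cascade setting, following Frauenfelder \cite{Frauenfelder2004} and \cite{CieliebakFrauenfelder2009}. First I would recall the stratification of ${\mathscr{M}}(x^-,x^+)$ into the pieces ${\mathscr{M}}_m(x^-,x^+)$ of flowlines with exactly $m$ cascades. For $m=0$ the space is simply a quotient by $\mathbb{R}$ of a Morse trajectory space on ${\crit}({\cA^{H}})$ with its Morse--Smale pair $(f,g)$; since the relevant component of ${\crit}({\cA^{H}})$ on which $x^-,x^+$ lie is a manifold (compact, or $\Sigma_0$ with $f$ coercive so that the negative gradient flow is still well-behaved up to breaking), this is a manifold of dimension $\mu_\sigma(x^+)-\mu_\sigma(x^-)-1$ by classical Morse theory; here one uses $\mu(x)=\mu_\sigma(x)+\mu_{CZ}(x)+\tfrac12$ and the fact that $\mu_{CZ}$ is constant on connected components (established in the proof of Theorem \ref{twr:trasvers}) so that $\mu(x^+)-\mu(x^-)=\mu_\sigma(x^+)-\mu_\sigma(x^-)$ when the two points lie on the same component.

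For $m\geq 1$ I would describe ${\mathscr{M}}_m(x^-,x^+)$ as a fibre product: it consists of tuples $(\gamma_0, u_1, \gamma_1, u_2, \dots, u_m, \gamma_m)$ where the $u_i$ are Floer trajectories connecting connected components $\Lambda_{i-1},\Lambda_i$ of ${\crit}({\cA^{H}})$, and $\gamma_i$ are (possibly length-zero) gradient trajectories of $f$ within these components, subject to the matching conditions $\ev^+(u_i)$ lies on the same unstable/stable leaf as $\ev^-(u_{i+1})$ via $\gamma_i$, together with $\gamma_0\in W^u(x^-)$ and $\gamma_m\in W^s(x^+)$. By Theorem \ref{twr:trasvers} each moduli space $\mathscr{M}_{J,H}(\Lambda_{i-1},\Lambda_i)$ is a smooth manifold of dimension $\mu_{CZ}(\Lambda_i)-\mu_{CZ}(\Lambda_{i-1})+\tfrac12(\dim\Lambda_{i-1}+\dim\Lambda_i)$; the evaluation maps $\ev^\pm$ to the (boundaryless) components $\Lambda_i$ are submersions after a generic perturbation (this is where one invokes the Morse--Smale genericity of $(f,g)$ jointly with the regularity of $J$, exactly as in \cite{Frauenfelder2004}, Appendix C); hence the fibre products are cut out transversally and ${\mathscr{M}}_m(x^-,x^+)$ is a smooth manifold. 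A bookkeeping computation, telescoping the index contributions and accounting for the $\tfrac12\dim\Lambda_i$ terms appearing twice (once from $\ev^+$ into $\Lambda_i$, once from $\ev^-$ out of $\Lambda_i$) together with the signature indices $\mu_\sigma$ at the two ends and the $\mathbb{R}^m$ reparametrisation quotient, collapses everything to $\mu(x^+)-\mu(x^-)-1$; the constant $\tfrac12$ in \eqref{muRF} is precisely what makes the half-integer contributions cancel. I would present this as the algebraic identity
\[
\dim {\mathscr{M}}_m(x^-,x^+) = \mu_\sigma(x^+)+\mu_{CZ}(\Lambda_m) - \mu_\sigma(x^-) - \mu_{CZ}(\Lambda_0) - 1,
\]
which is independent of $m$ and of the intermediate components, hence equals $\mu(x^+)-\mu(x^-)-1$.

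Finally, to see that the union ${\mathscr{M}}(x^-,x^+)=\bigcup_m {\mathscr{M}}_m(x^-,x^+)$ is itself a smooth manifold without boundary (rather than a manifold with corners), I would invoke the gluing theorem: a broken configuration with $m$ cascades where one of the internal gradient segments $\gamma_i$ has length zero is glued to configurations with $m-1$ cascades, and the gluing map is a diffeomorphism onto a neighbourhood, so the apparent "boundary" strata are in fact interior. This is the main technical obstacle, and I would handle it exactly as in \cite{Frauenfelder2004}, Appendix A and C, and \cite{CieliebakFrauenfelder2009}; the only point requiring care in our non-compact setting is that all the Morse flow on $\Sigma$ involved in any such configuration with $x^-,x^+$ fixed is confined to the compact sets $K(b)$ furnished by Lemma \ref{compact_shade} (with $b=\max({\cA^{H}}(x^-),{\cA^{H}}(x^+),0)$), so the gluing estimates, which are local and uniform on compacta, go through verbatim. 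The boundedness of the images of the Floer moduli spaces in $M\times\mathbb{R}$, assumed in the statement, is what guarantees the $L^\infty$-control needed for the implicit-function-theorem step in gluing.
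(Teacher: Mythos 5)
Your proposal follows essentially the same route as the paper's proof: decompose by cascade number, treat $m=0$ via classical Morse theory on a single critical component (where $\mu_{CZ}$ is constant so $\mu$ reduces to $\mu_\sigma$), treat $m\geq 1$ via the fibre-product/transversality argument from Frauenfelder's appendix, and assemble the strata by gluing along the $t_i=0$ boundaries into a single boundaryless manifold, using the $L^\infty$-bounds and Theorem~\ref{twr:compcas} for compactness. The extra detail you give on the telescoping index computation and on confining the Morse flow to the compact sets $K(b)$ from Lemma~\ref{compact_shade} is material the paper delegates to the cited references, but the overall structure and the key lemmas invoked are the same.
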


\begin{proof}
The first step is to show that for every pair $x^{-},x^{+}\in {\crit}(f)$ and every $m\in\mathbb{N}$ the corresponding moduli space ${\mathscr{M}}_{m}(x^{-},x^{+})$ is a smooth manifold. 

For $m=0$ the points $x^{-}$ and $x^{+}$ belong to the same connected component of ${\crit}({\cA^{H}})$ and the moduli space ${\mathscr{M}}_{0}(x^{-},x^{+})$ consists of Morse flowlines. By assumption the pair $(f,g)$ was chosen to be Morse-Smale, hence by standard Morse-theoretical arguments \cite{schwarz1993} we conclude that ${\mathscr{M}}_{0}(x^{-},x^{+})$ is a smooth manifold of dimension $\mu_{\sigma}(x^{+})-\mu_{\sigma}(x^{-})-1$. 
In this case $x^{-}$ and $x^+$ belong to the same connected component of ${\crit}(\cA^{H})$, so their Conley-Zehnder indeces are the same, which implies that the dimension of ${\mathscr{M}}_{0}(x^{-},x^{+})$ is in fact equal to $\mu(x^{+})-\mu(x^{-})-1$.

For $m\geq 1$ whenever ${\mathscr{M}}_{m}(x^{-},x^{+})\neq \emptyset$ then the points $x^{-}$ and $x^{+}$ belong to different critical components of ${\crit}(\cA^{H})$. Following the arguments from the Appendix of \cite{Frauenfelder2004} and \cite{Fauck2016}, section 2.4, we conclude that ${\mathscr{M}}_{m}(x^{-},x^{+})$ is a smooth manifold of dimension $\mu(x^{+})-\mu(x^{-})-1$ with boundary. By Theorem \ref{twr:compcas} if we take the union of the spaces ${\mathscr{M}}_{m}(x^{-},x^{+})$ over all $m \geq 1$ (cf. \eqref{M(p,q)}), we obtain a space ${\mathscr{M}}(x^{-},x^{+})$ which is precompact. In fact we can glue the manifolds with different number of cascades along the boundary (cf. \cite[Prop. 1b]{Floer1989}, \cite[Cor. A.15]{Frauenfelder2004}, \cite[Thm. 9.2.3]{Audin2014}) to equip the set ${\mathscr{M}}(x^{-},x^{+})$  with a smooth structure. As a result ${\mathscr{M}}(x^{-},x^{+})$ becomes a smooth, precompact manifold without boundary, of dimension $\mu(x^{+})-\mu(x^{-})-1$.
\end{proof} 
\section{Definition of RFH}
\label{sec:defRFH}

In this section we will extend the definition given in \cite{CieliebakFrauenfelder2009} so that it also applies to a class of non-compact hypersurfaces. 

\begin{theorem}
Consider a smooth Hamiltonian $H$ on an exact symplectic manifold $(M,\omega)$, such that $\Sigma=H^{-1}(0)$ is a regular level set and a smooth hypersurface of exact contact type. Additionally, assume that $H$ satisfies \ref{PO} and \ref{MB} and there exists $J\in\mathscr{J}(M,\omega)$, such that for every pair of connected submanifolds without boundary $\Lambda^{-},\Lambda^{+}\subseteq {\crit}({\cA^{H}})$ the associated moduli space $\mathscr{M}_{J,H}(\Lambda^{-},\Lambda^{+})$ is a smooth manifold without boundary, whose image in $M\times \mathbb{R}$ is bounded. Then the associated Rabinowitz Floer homology $RFH(H,J),$ is well defined.
\label{thm:defRFH}
\end{theorem}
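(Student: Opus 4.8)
The plan is to run the Morse--Bott--with--cascades construction of Floer homology on the complex generated by critical points of an auxiliary Morse function, using as inputs the smoothness of the cascade moduli spaces (Proposition~\ref{prop:casmfld}) and their compactness up to broken flow lines with cascades (Theorem~\ref{twr:compcas}). First I would fix a Morse--Smale pair $(f,g)$ on $\crit(\cA^H)$ with $f$ \emph{coercive} on the non-compact component $\Sigma_0$ (and an arbitrary Morse function on each compact circle component); such pairs exist because every manifold admits a coercive Morse function and the Morse--Smale condition is generic, and with this choice the hypotheses of Proposition~\ref{prop:casmfld} and Theorem~\ref{twr:compcas} are precisely those assumed in the statement. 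I would then let $RFC_*(H,J,f,g)$ be the $\Z_2$--vector space (or, after fixing coherent orientations on the cascade moduli spaces as in \cite{Frauenfelder2004}, the $\Z$--module) generated by $\crit(f)$, graded by the index $\mu$ of \eqref{muRF}, and completed with respect to the action as in \cite{CieliebakFrauenfelder2009}. What makes this group well-defined is the combination of: discreteness of $\critval(\cA^H)$ (Lemma~\ref{lem:critVal}); finiteness of the number of connected components of $\crit(\cA^H)$ in every bounded action window (already used in the proof of Lemma~\ref{compact_shade}); and compactness of the sublevel sets of $f$ on $\Sigma_0$, which controls the part of $\crit(f)$ that lies on the non-compact component.

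Next I would define the boundary operator. For $x^-,x^+\in\crit(f)$ with $\mu(x^+)-\mu(x^-)=1$, Proposition~\ref{prop:casmfld} gives that $\mathscr{M}(x^-,x^+)$ is a smooth $0$--manifold, and an index count rules out breaking (each piece of a broken configuration raises $\mu$ by at least one), so by Theorem~\ref{twr:compcas} $\mathscr{M}(x^-,x^+)$ is already compact, hence a finite set. I would set $\partial x^- := \sum_{\mu(x^+)=\mu(x^-)+1}\#\mathscr{M}(x^-,x^+)\,x^+$, the count taken mod $2$ or with the coherent orientations, so that $\partial$ raises the grading by one. To see that $\partial$ respects the completion I would note that, for a fixed $x^-$, imposing a bound on the action confines the contributing $x^+$ to a bounded action window, hence to finitely many components, each compact except possibly $\Sigma_0$, on which coercivity of $f$ again confines the relevant generators to a compact sublevel set; so $\partial$ extends continuously from finite to completed sums.

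The relation $\partial^2=0$ I would prove in the standard way. For $x^-,x^+$ with $\mu(x^+)-\mu(x^-)=2$, Proposition~\ref{prop:casmfld} gives that $\mathscr{M}(x^-,x^+)$ is a smooth $1$--manifold; Theorem~\ref{twr:compcas} compactifies it to a compact $1$--manifold with boundary; and, by the index restriction together with the gluing results cited in the proof of Proposition~\ref{prop:casmfld}, its boundary is canonically $\bigsqcup_{y:\,\mu(y)=\mu(x^-)+1}\mathscr{M}(x^-,y)\times\mathscr{M}(y,x^+)$. Since a compact $1$--manifold has an even number of boundary points (zero, with coherent orientations), this yields $\sum_{y}\#\mathscr{M}(x^-,y)\,\#\mathscr{M}(y,x^+)=0$, i.e.\ the coefficient of $x^+$ in $\partial^2 x^-$ vanishes, the sum over intermediate generators $y$ being finite by the same action-window and coercivity bookkeeping. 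Thus $(RFC_*(H,J,f,g),\partial)$ is a chain complex, and one sets $RFH(H,J):=H_*(RFC_*(H,J,f,g),\partial)$. Independence of the auxiliary pair $(f,g)$ (and of the metric $g_J$) is the usual continuation argument: two choices are compared by a generic homotopy $(f_s,g_s)$ kept coercive along $\Sigma_0$, and the induced chain maps are mutually inverse chain homotopy equivalences, the analytic input being the parametrized analogues of Proposition~\ref{prop:casmfld} and Theorem~\ref{twr:compcas}; dependence on $J$ is retained in the notation and removed only under the additional assumptions of Section~\ref{sec:InvRFH}.

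The step I expect to be the genuine obstacle is not an analytic one --- the $L^\infty$-bounds, transversality, and cascade compactness are all established or assumed --- but the bookkeeping forced by non-compactness: equipping the chain group with the right action completion so that $\partial$ is defined on it, and verifying at each stage (definition of $\partial$, the identity $\partial^2=0$, and the continuation argument) that the sums over intermediate critical points and over critical components are finite. This is exactly where the discreteness of $\critval(\cA^H)$, the finiteness of critical components per action window, and --- crucially --- the \emph{coercivity} of $f$ on $\Sigma_0$ all get used: without coercivity even the Morse subcomplex on the non-compact component $\Sigma_0$ would fail to be well-defined, cf.\ \cite{kang}.
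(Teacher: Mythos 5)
Your proposal follows essentially the same route as the paper's proof: fix a Morse--Smale pair with $f$ coercive, form the Novikov-completed $\mathbb{Z}_2$ chain complex generated by $\crit(f)$ graded by $\mu$, define the differential by counting the zero-dimensional cascade moduli spaces of Proposition~\ref{prop:casmfld}, verify the Novikov finiteness using the bounded action window (via Lemma~\ref{lem:critVal} and Lemma~\ref{compact_shade}) together with coercivity of $f$ on $\Sigma_0$, prove $\partial^2=0$ via the compactification in Theorem~\ref{twr:compcas} and the standard gluing argument, and finally appeal to a continuation argument for $(f,g)$-independence (the paper cites Theorem~8 of \cite{schwarz1993}). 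One cosmetic point to be careful about: you write the differential as grading- and action-raising, $\partial x^-=\sum\#\mathscr{M}(x^-,x^+)\,x^+$, which is the transpose of the paper's $\partial x=\sum n(y,x)y$; the Novikov completion must then be the dual one (finitely many generators below each action level rather than above), since with the completion \eqref{eqn:fin} as literally stated the coefficient of $x^+$ in $\partial\xi$ would a priori be an infinite sum over $x^-$ of lower action.
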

\begin{proof}
The proof follows the construction of Rabinowitz Floer homology in \cite{CieliebakFrauenfelder2009} and combines the results of the previous sections into a generalization for the non-compact setting.

Fix a Morse-Smale pair $(f,g)$ on ${\crit}({\cA^{H}})$, such that $f$ is coercive. Consider the chain complex $CF_*({\cA^{H}},f)$ generated by the critical points of $f$, with coefficients in $\mathbb{Z}_2$ and grading given by the index $\mu$. More precisely, $CF_{k}({\cA^{H}},f)$ consists of formal sums 
$$
p= \sum_{\substack{x\in {\crit}(f),\\ \mu(x)=k}}p_{x}x,
$$
where the coefficients $p_{x}\in \mathbb{Z}_{2}$ satisfy the following Novikov finiteness condition: for all $a\in \mathbb{R}$,
\begin{equation}
\#\{x\in {\crit}(f)\ |\ \mu(x)=k,\ p_{x}\neq 0,\ \textrm{and}\ {\cA^{H}}(x)\geq a \} < \infty.
\label{eqn:fin}
\end{equation}

By Proposition \ref{prop:casmfld}, for a given a pair of points $x,y\in {\crit}(f)$ with $\mu (x)-\mu (y)=1$, the moduli space ${\mathscr{M}}(y,x)$ is a discrete set of points. On the other hand, by Theorem \ref{twr:compcas}, ${\mathscr{M}}(y,x)$ is also compact, hence it is a finite set. Denote its modulo $2$ cardinality by 
\begin{equation}
n(y,x):=\# {\mathscr{M}}(y,x)\, \textrm{mod}\ 2\ \in \mathbb{Z}_{2},
\label{eqn:n}
\end{equation}
and define the Floer boundary operator
$$
\partial: CF_{*+1}({\cA^{H}},f) \to CF_{*}({\cA^{H}},f),
$$
as the linear extension of
$$
\partial x := \sum_{\substack{y\in {\crit}(f),\\ \mu(x)-\mu(y)=1}}n(y,x)y.
$$
In order for the boundary operator to be well defined, one has to check that the finiteness condition (\ref{eqn:fin}) holds  for $\partial x$. In other words, we must show that for all $x\in {\crit}(f)$ and $a \in \mathbb{R}$
\begin{equation}
\# \{ y \in {\crit}(f)\ |\ \mu(x)-\mu(y)=1,\ n(y,x)\neq 0\ \textrm{and}\ {\cA^{H}}(y) \geq a\} < + \infty,
\label{eqn:finit}
\end{equation}
Set $b:={\cA^{H}}(x)$. Note that $n(y,x)\neq 0$ implies $ b\geq {\cA^{H}}(y)\geq a$. 
We consider the following two cases:
\begin{enumerate}
\item If $\cA^{H}(x)=\cA^{H}(y)=0$, then $n(y,x)\neq 0$ implies $y \in  {\crit}(f)\cap \operatorname{cl}( W^{s}_{f}(x))$, where $\operatorname{cl}$ stands for closure. Moreover, $\operatorname{cl}( W^{s}_{f}(x))\subseteq f^{-1}((-\infty, f(x)])$, which is compact by coercivity of $f$ on $\Sigma$, and thus $\operatorname{cl}( W^{s}_{f}(x))\cap {\crit}(f)$ is finite.
\item In the other case, i.e. if ${\cA^{H}}(y)<b$ or $b\neq 0$, then the condition $n(y,x)\neq 0$ implies that 
$$
y \in  {\crit}(f)\cap \left(\left({\cA^{H}}\right)^{-1}\left([a, b]\setminus \{0\}\right)\cup K(b)\right).
$$
Observe that the set $\left({\cA^{H}}\right)^{-1}\left([a, b]\setminus \{0\}\right)$ is a finite union of nonzero connected components of ${\crit}({\cA^{H}})$, which are all compact. Moreover, the set $K(b)$ is compact by Lemma \ref{compact_shade}. Thus their sum contains finitely many critical points of $f$ and the condition (\ref{eqn:finit}) follows.
\end{enumerate}
Now if we combine the results from Theorem \ref{twr:compcas} and Proposition \ref{prop:casmfld} with the standard gluing argument in Floer theory (see for example Proposition 2d.1 in \cite{Floer1989} or Theorem 9.2.2 in \cite{Audin2014}), then we obtain that the operator $\partial$ defined above is indeed a boundary operator, i.e. $\partial^{2}=0$. As a result, we can define Rabinowitz Floer homology of the quadruple $(H,J,f,g)$ by setting:
$$
RFH_{k}(H,J,f,g):= \frac{\operatorname{Im}(\partial_{k+1})}{\Ker (\partial_{k})}.
$$
In fact, by Theorem 8 in \cite{schwarz1993}, the homology defined in this way is independent of the choice of Morse-Smale pair $(f,g)$ as long as $f$ is coercive on ${\crit}({\cA^{H}})$. Therefore, we can denote it by $RFH(H,J)$.
\end{proof}

Just as in the compact case, Rabinowitz Floer homology has the following property:
\begin{corollary}
Whenever $\Sigma$ carries no closed characteristics, then $RFH(H)$ is well defined and isomorphic to the singular homology of $\Sigma$ shifted by half of the dimension of $M$.
\end{corollary}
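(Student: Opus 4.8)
The plan is to show that the absence of closed characteristics collapses the Rabinowitz Floer complex onto the Morse complex of the coercive auxiliary pair $(f,g)$ on $\Sigma_0$, and then to invoke Schwarz's identification of Morse homology for coercive functions with singular homology. First I would determine $\crit(\cA^H)$: if $(v,\eta)$ is a critical point with $\eta\neq 0$, then by \eqref{Jgrad} $\partial_t v=\eta X_H(v)$, so $t\mapsto v(t/\eta)$ is a closed characteristic on some level set $\{H=c\}$, and $\int_0^1 H(v)=0$ forces $c=0$, contradicting the hypothesis. Hence every critical point has $\eta=0$ and $v\equiv p$ with $H(p)=0$, so $\crit(\cA^H)=\Sigma\times\{0\}=\Sigma_0$, which is diffeomorphic to $\Sigma$.

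Next I would check that the construction of Theorem~\ref{thm:defRFH} still applies. Property~\ref{PO} holds vacuously, since there are no non-degenerate periodic orbits at all, and \ref{MB} holds because $\cA^H$ is Morse--Bott along $\Sigma_0$ for every Hamiltonian (Step~4 of Theorem~B.1 in \cite{CieliebakFrauenfelder2009}) and $\Sigma_0$ is now the entire critical set. The only moduli space of Floer trajectories in the sense of Definition~\ref{moduli} is $\scrM(\Sigma_0,\Sigma_0)$, and any $u$ in it has energy $\int_{\R}\|\partial_s u\|^2\,ds=\cA^H(\ev^+(u))-\cA^H(\ev^-(u))=0$, hence $u$ is independent of $s$ and therefore constant. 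In particular there are no non-constant Floer trajectories, so the moduli spaces of flow lines with cascades $\scrM(p,q)$, $p,q\in\crit(f)$, reduce to the spaces $\scrM_0(p,q)$ of Morse gradient flow lines of $f$ on $\Sigma_0$, which are compact up to breaking by coercivity of $f$ alone. Consequently, the parts of the arguments of Section~\ref{sec:compact}, Lemma~\ref{compact_shade} and Proposition~\ref{prop:casmfld} that appeal to boundedness of the Floer moduli spaces are never triggered, while their purely Morse-theoretic parts apply verbatim; thus $\partial$ is well defined, $\partial^2=0$, the homology does not depend on $J$ (which never enters the count) and, by \cite{schwarz1993}, not on the coercive Morse--Smale pair $(f,g)$. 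So $RFH(H)$ is well defined.

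With only $m=0$ cascades, $\partial$ is exactly the Morse differential of $(f,g)$ on $\Sigma_0$, re-indexed by the grading $\mu$ of \eqref{muRF}. Hence $RFH_*(H)$ is the Morse homology of $(f,g)$ on $\Sigma_0\cong\Sigma$, which by \cite{schwarz1993} is $H^{\mathrm{sing}}_*(\Sigma;\Z_2)$. For the shift: $\mu_{CZ}$ is a homotopy invariant, hence constant on the connected component(s) of $\Sigma_0$ and equal to the Conley--Zehnder index of the constant-loop component, computed as in \cite{CieliebakFrauenfelder2009}; meanwhile \eqref{eqn:muSig} together with $\dim W^s_f(x)+\dim W^u_f(x)=\dim\Sigma_0=\dim M-1$ gives $\mu_\sigma(x)=\tfrac12(\dim M-1)-\dim W^u_f(x)$ at each $x\in\crit(f)$. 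Substituting into \eqref{muRF} shows that $\mu(x)$ differs from the Morse index of $x$ by the constant $\tfrac12\dim M$ plus $\mu_{CZ}(\Sigma_0)$, which is precisely the asserted shift by half the dimension of $M$.

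Once the critical set has been identified, the topological content is automatic; the delicate point is the bookkeeping of the last step. I would expect the main obstacle to be twofold: confirming that no non-constant cascade secretly contributes to the differential (this rests on $\Sigma_0$ being the only critical component and, as in Section~\ref{sec:compact}, on exactness of $M$ excluding bubbling), and pinning down the Conley--Zehnder index of the constant-loop component together with the orientation conventions relating $\mu$ to the Morse index, so that the shift comes out exactly as $\tfrac12\dim M$.
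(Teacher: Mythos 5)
Your proposal follows essentially the same route as the paper: identify $\crit(\cA^H)$ with $\Sigma_0$, observe that the energy identity forces every Floer trajectory to be $s$-independent so that all cascades degenerate to Morse flowlines, apply Schwarz's theorem for coercive Morse functions, and convert the $\mu$-grading to the Morse grading. One small caution on your last step: because the Rabinowitz Floer complex is built from the \emph{positive} gradient flow of $f$, the signature grading satisfies $\mu(x)+\dim W^u_f(x)=\mathrm{const}$ rather than $\mu(x)-\dim W^u_f(x)=\mathrm{const}$, so one must also reverse the flow direction (and hence the role of $W^u_f$ and $W^s_f$) to match the standard Morse differential — the paper handles this by explicitly ``changing the sign of the gradient flow and the order in the differential'' — and its displayed isomorphism actually records the shift as $n-1$; with that bookkeeping your argument lands exactly where the paper does.
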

In this case, the assumptions of Theorem \ref{thm:defRFH} are trivially satisfied and the Rabinowitz Floer homology is generated by the positive gradient flow of the coercive function $f$ on $\Sigma$. If we change the sign of the gradient flow and change the order in the definition of the differential, we retrieve the Morse homology of $\Sigma$ with respect to the signature grading. Indeed, the number of positive gradient flowlines flowing out of a critical point is equal to the number of negative gradient flowlines flowing into the same point. On the other hand, the Morse homology of $\Sigma$ with signature grading is isomorphic to its Morse homology shifted by half of the dimension of $M$. Finally, the Morse homology of $\Sigma$ with Morse grading is isomorphic to its singular homology (see \cite{schwarz1993} for a proof of the last isomorphism in the non-compact case):
$$
RFH_*(H,J) = RFH_*(H,J,f,g) = MH_{*+n-1} (f,g, \Sigma) = H_{*+n-1} (\Sigma).
$$
This observation allows us to use the Rabinowitz Floer homology to detect periodic orbits. Indeed, if the Rabinowitz Floer homology of $H$ is well defined and differs from the singular homology of the hypersurface $\Sigma$, then $\Sigma$ carries a closed characteristic.

The following corollary shows that Rabinowitz Floer homology is invariant under symplectomorphisms:

\begin{proposition}
Let $\varphi:(M_1,\omega_1)\to (M_2,\omega_2)$ be a symplectomorphism of exact symplectic manifolds and suppose that the pair $(H,J)\in C^{\infty}(M_2)\times\mathscr{J}(M_2,\omega_2)$ satisfies the assumptions of Theorem \ref{thm:defRFH}, so that in particular the Rabinowitz Floer homology of the pair $(H,J)$ is well defined. Then the Rabinowitz Floer homology is also well defined for the pair $\varphi^{*}(H,J)=(H\circ \varphi,D\varphi^{-1}\circ J \circ D\varphi)\in  C^{\infty}(M_1)\times\mathscr{J}(M_1,\omega_1)$ and
$$
RFH(\varphi^{*}(H,J)) \cong RFH(H,J).
$$
\label{prop:sympRFHinv}
\end{proposition}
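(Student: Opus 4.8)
The plan is to exploit the fact that a symplectomorphism pulls back every ingredient of the construction in Theorem \ref{thm:defRFH}. I would begin by introducing the map $\Phi(v,\eta):=(\varphi\circ v,\eta)$ between the loop-and-multiplier spaces and checking that it intertwines all the relevant structures. Since $\omega_1=\varphi^*\omega_2$, the form $\varphi^*\lambda_2$ is a primitive of $\omega_1$, and because $\cA^\bullet$ is independent of the chosen Liouville form, a direct computation gives $\cA^{H\circ\varphi}=\cA^H\circ\Phi$. Likewise $X_{H\circ\varphi}=D\varphi^{-1}\circ X_H\circ\varphi$, so $\varphi$ conjugates the Hamiltonian flows on $\Sigma_1:=(H\circ\varphi)^{-1}(0)=\varphi^{-1}(\Sigma)$ and on $\Sigma$; the loop-space metric $g_{\varphi^*J}$ equals $\Phi^*g_J$, so $\Phi$ is an isometry carrying $\nabla_{\varphi^*J}\cA^{H\circ\varphi}$ to $\nabla_J\cA^H$, hence mapping solutions of the Rabinowitz Floer equation for $(H\circ\varphi,\varphi^*J)$ bijectively and isometrically onto those for $(H,J)$. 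Differentiating, $\Phi$ intertwines the Hessians \eqref{HessAH}, so it matches non-degenerate critical points and restricts to a diffeomorphism $\crit(\cA^{H\circ\varphi})\to\crit(\cA^H)$ taking connected components to connected components; pushing a symplectic trivialization of $v^*TM_1$ forward by $D\varphi$ and using the flow conjugation, I would check that the Conley--Zehnder index is unchanged, $\mu_{CZ}\circ\Phi=\mu_{CZ}$.

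With these identifications available, the next step is to verify that $\varphi^*(H,J)$ satisfies the hypotheses of Theorem \ref{thm:defRFH}. The level set $\Sigma_1$ is regular and of exact contact type, because the $\varphi$-related vector field $\varphi^*Y=D\varphi^{-1}\circ Y\circ\varphi$ of a Liouville field $Y$ for $\Sigma$ is again Liouville (for $\omega_1$) and transverse to $\Sigma_1$. Property \ref{PO} transports because the non-degenerate periodic orbits of $H\circ\varphi$ in a fixed action window are exactly the $\Phi$-preimages of those of $H$ in the same window, so their images lie inside $\varphi^{-1}$ of the corresponding compact set, which is again compact; property \ref{MB} transports because $\cA^{H\circ\varphi}=\cA^H\circ\Phi$ with $\Phi$ a diffeomorphism, and because $\varphi$ conjugates the flows, so $D\phi^\eta_1-\mathrm{Id}$ and $D\phi^\eta_2-\mathrm{Id}$ are conjugate. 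Finally I would take $\varphi^*J$ as the required almost complex structure on $M_1$ (with $\varphi^*J_0$ as the reference structure, so that the uniform $C^k$-bounds \eqref{supJ} persist): then $\Phi$ yields, for each pair of components $\Lambda^\pm\subseteq\crit(\cA^{H\circ\varphi})$, a diffeomorphism $\mathscr{M}_{\varphi^*J,H\circ\varphi}(\Lambda^-,\Lambda^+)\cong\mathscr{M}_{J,H}(\Phi(\Lambda^-),\Phi(\Lambda^+))$ onto a smooth boundaryless manifold with precompact image, and since a diffeomorphism is proper the source has precompact image too. Theorem \ref{thm:defRFH} then gives that $RFH(\varphi^*(H,J))$ is well defined.

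The remaining step is to produce the isomorphism on homology. I would fix a Morse--Smale pair $(\tilde f,\tilde g)$ on $\crit(\cA^H)$ with $\tilde f$ coercive and set $(f,g):=(\tilde f\circ\Phi,\Phi^*\tilde g)$; since $\Phi$ restricts to a homeomorphism of the critical sets, $f$ is coercive, and since $\Phi$ conjugates the negative gradient flows, $(f,g)$ is Morse--Smale and $\Phi$ matches the dimensions of the stable and unstable manifolds, so $\mu_\sigma\circ\Phi=\mu_\sigma$. Combined with $\mu_{CZ}\circ\Phi=\mu_{CZ}$ and \eqref{muRF}, the induced bijection $\crit(f)\to\crit(\tilde f)$ preserves the grading $\mu$. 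Moreover $\Phi$ sends Morse flow lines to Morse flow lines and Floer trajectories to Floer trajectories, hence induces bijections $\mathscr{M}(y,x)\cong\mathscr{M}(\Phi(y),\Phi(x))$ between the moduli spaces of flow lines with cascades, so the mod-$2$ counts $n(y,x)$ agree. Therefore $\Phi$ induces an isomorphism of chain complexes $CF_*(\cA^{H\circ\varphi},f)\to CF_*(\cA^H,\tilde f)$ commuting with the boundary operators (the Novikov finiteness condition is transported along since $\cA^{H\circ\varphi}=\cA^H\circ\Phi$); passing to homology and then invoking the independence of $RFH$ from the choice of coercive Morse--Smale pair (Theorem 8 of \cite{schwarz1993}) gives $RFH(\varphi^*(H,J))\cong RFH(H,J)$.

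Rather than a single deep obstacle, the parts that require genuine care are: (i) the index bookkeeping, i.e.\ verifying that both the Conley--Zehnder index and the signature index are unchanged under $\Phi$ --- the former needs the flow conjugation together with a compatible choice of symplectic trivialization and of cappings --- so that $\Phi$ is truly degree preserving; (ii) checking that the exponential-decay Banach manifolds of \eqref{eqn:MeqBnabA} are identified by $\Phi$, which is where one uses that $\varphi$ is locally bi-Lipschitz in a neighborhood of the (compact pieces of the) critical manifolds; and (iii) observing that the ``boundedness'' hypothesis on the moduli spaces means precompactness of their images in $M\times\R$, a condition preserved precisely because a diffeomorphism is proper. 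Everything else is naturality of the constructions of the preceding sections under the symplectomorphism.
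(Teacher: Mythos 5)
Your proposal is correct and follows essentially the same route as the paper: both arguments transport all data through the loop-space diffeomorphism induced by $\varphi$ (you call it $\Phi$, the paper calls its inverse $\varphi_{\#}$), checking in turn that the action functional, the contact type condition, property \ref{PO}, the Conley--Zehnder index, the Morse--Bott property, the gradient/Floer equations and hence the moduli spaces all correspond, before concluding that the chain complexes are isomorphic. You spell out a few details the paper leaves implicit (the transported Morse--Smale pair and signature index, the exponential-weight Banach manifolds, the properness argument for boundedness), but the underlying strategy is identical.
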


\begin{proof}

We will show that the pair $\varphi^{*}(H,J)$ also satisfies the conditions of Theorem \ref{thm:defRFH}.

Since $\varphi$ is a symplectomorphism, if $\lambda_2$ is a primitive of $\omega_2$, then $\varphi^{*}\lambda_2$ is a primitive of $\omega_1$, hence
$\varphi$ preserves the contact type property of the hypersurface. Furthermore, $\varphi$ induces a diffeomorphism of loop spaces
$$
\varphi_{\#}: C^{\infty}(\R/\Z;M_2)\times \R  \to C^{\infty}(\R/\Z;M_1)\times \R,\quad (v,\eta) \mapsto (\varphi^{-1}\circ v,\eta).
$$
The action functionals $\mathcal{A}^{H}$ to $\mathcal{A}^{H \circ \varphi}$ differ by composition with $\varphi_{\#}$, so in particular
$d \cA^{H\circ\varphi}= d\cA^{H}\circ D\left(\varphi_{\#}\right)^{-1}$, and ultimately $(v,\eta) \in \crit\left(\cA^{H}\right)$ if and only if $\varphi_{\#}(v,\eta)\in \crit\left(\cA^{H\circ\varphi}\right)$.
In particular, for every $a,b\in\R$ there is a one-to-one correspondence between 
$$
\crit\left(\cA^{H}\right)\cap \left(\cA^{H}\right)^{-1}\left([a,b]\right)
\quad \textrm{and} \quad
\crit\left(\cA^{H \circ \varphi}\right)\cap \left(\cA^{H \circ \varphi}\right)^{-1}\left([a,b]\right).
$$
Since $\varphi$ is a diffeomorphism, pre-images of compact sets are compact, and so we can conclude that the Hamiltonian $H\circ \varphi$ satisfies \ref{PO} if and only if $H$ satisfies \ref{PO}.

Now we will show that $\varphi_\#$ preserves the Conley-Zehnder indices. Denote by $\phi^{t}$ the Hamiltonian flow of $X_H$ on $M_2$ and let $\gamma$ be a contractible, closed characteristic of $\phi^{t}$ on $\Sigma$ with period $\eta$. Let $\lambda_2$ be the primitive of $\omega_2$, such that $\xi=\Ker\left(\lambda_2\Big|_{T\Sigma}\right)$ is the contact structure on $\Sigma$ and let $\Phi:[0,\eta]\times \R^{2n-2} \to \gamma^{*}(TM_2)$ be a trivialization of $\xi$ along the closed characteristic $\gamma$. There is a splitting $T\Sigma=X_H\oplus \xi$, which is preserved by the flow $\phi_t$. Then the Conley-Zehnder index of $(\gamma(\frac{t}{\eta}),\eta) \in \crit(\mathcal{A}^H)$ is equal to the Maslov index of the path of symplectic matrices $\Gamma_1:[0,\eta]\to Sp(n-1)$ defined by
$$
\Gamma_1(t):= \Phi^{-1}(t) \circ D\phi^t \circ \Phi (0).
$$

Let us now construct the path of symplectic matrices corresponding to the closed characteristic $\varphi^{-1} \circ \gamma$ on $(\varphi )^{-1}(\Sigma)$. The Hamiltonian flow of $X_{H\circ \varphi}$ on $M_1$ is given by $\varphi^{-1} \circ \phi^{t} \circ \varphi$ and the contact structure on $\varphi^{-1}(\Sigma)$ is given by $D\phi^{-1}(\xi)$. Then $D\varphi^{-1} \circ \Phi$ is a trivialization of $D\phi^{-1}(\xi)$ along $\varphi^{-1} \circ \gamma$. We can construct the corresponding path of symplectic matrices $\Gamma_2:[0,\eta]\to Sp(n-1)$ as before
$$
\Gamma_2(t):=(D\varphi^{-1} \circ \Phi (t))^{-1}\circ  D\left( \varphi^{-1} \circ \phi^{t} \circ \varphi\right)\circ D\varphi^{-1} \circ \Phi (0)=\Gamma_1(t).
$$
It turns out that the paths of symplectic matrices corresponding to $\gamma$ and to $\varphi^{-1}\circ \gamma$ are the same, hence the Conley-Zehnder indices are the same.

We will prove next that the map $\varphi_\#$ preserves the Morse-Bott property of the Rabinowitz action functional. Using \eqref{eqn:gJ} we can define a metric $g_J$ on $C^{\infty}(\R/\Z;M_2)\times \R$ associated to the almost complex structure $J$ and a metric $g_{\varphi^* J}$ on $C^{\infty}(\R/\Z;M_1)\times \R$ associated to the almost complex structure $\varphi^* J= D\varphi^{-1} \circ J \circ D\varphi$. Observe that the push-forward of $g_J$ to $C^{\infty}(\R/\Z;M_1)\times \R$ via $\varphi_{\#}$ is equal to $g_{\varphi^* J}$
$$
\left(\varphi_{\#}\right)^* g_J =  g_{\varphi^* J}.
$$
As a result
\begin{align}
\nabla_{g_{\varphi^* J}}\cA^{H \circ \varphi}\left( \varphi_{\#}(v,\eta)\right) & =D \varphi_{\#} \left( \nabla_{g_{J}}\cA^{H}\left( v,\eta\right)\right),\label{NabGVarJ} \\ 
\nabla_{g_{\varphi^* J}}^2 \cA^{H \circ \varphi}\left( \varphi_{\#}(v,\eta)\right) & = D \varphi_{\#} \circ  \nabla_{g_{J}}^2\cA^{H}\circ  D \left(\varphi_{\#}\right)^{-1} \nonumber
\end{align}
Consequently, if $\cA^H$ is Morse-Bott, then $\cA^{H\circ \varphi}$ is Morse-Bott, since for $(v,\eta) \in \crit \left(\cA^{H\circ \varphi}\right)$ we have
\begin{align*}
\Ker\left(\nabla_{g_{\varphi^* J}}^2 \cA^{H \circ \varphi}(v,\eta) \right)  & = \Ker\left(D \varphi_{\#} \circ  \nabla_{g_{J}}^2\cA^{H}\circ  D \left(\varphi_{\#}\right)^{-1}  \right)
 = \Ker\left(\nabla_{g_{J}}^2\cA^{H} \circ D \left(\varphi_{\#}\right)^{-1} \right) \\
& = \Ker\left( d \cA^{H} \circ  D \left(\varphi_{\#}\right)^{-1}  \right)
=\Ker\left( d_{(v,\eta)} \cA^{H\circ \varphi} \right)
= T_{(v,\eta)}\cA^{H\circ \varphi}.
\end{align*}

Finally, we show that there is a one-to-one correspondence between the Floer trajectories of the pairs $(H,J)$ and $\varphi^{*}(H,J)$. In other words, for every two connected components $\Lambda^{-},\Lambda^{+}\subseteq \crit\left( \cA^{H}\right)$ we have
\begin{equation}
u \in \mathscr{M}_{J,H}(\Lambda^{-},\Lambda^{+})
\quad \iff \quad 
\varphi_{\#} \circ u \in \mathscr{M}_{\varphi^{*}(J,H)}(\varphi_{\#}(\Lambda^{-}),\varphi_{\#}(\Lambda^{+})). \label{MvarJH}
\end{equation}
Indeed, if $u \in \mathscr{M}_{J,H}(\Lambda^{-},\Lambda^{+})$ then by \eqref{NabGVarJ} we have
$$
0 = D\varphi_{\#}\left(\partial_s u + \nabla_{g_{J}}\cA^{H}(u)\right)=\partial_s \left(\varphi_{\#} \circ u\right)+ \nabla_{g_{\varphi^* J}}\cA^{H \circ \varphi}\left( \varphi_{\#}\circ u\right).
$$
Since $\varphi$ is a diffeomorphism, the pre-images of compact sets are compact, and we can conclude that the moduli spaces $\mathscr{M}_{\varphi^{*}(J,H)}(\varphi_{\#}(\Lambda^{-}),\varphi_{\#}(\Lambda^{+}))$ are bounded smooth manifolds. It follows that the pair $\varphi^{*}(H,J)$ satisfies the conditions of Theorem \ref{thm:defRFH} and thus $RFH_*(\varphi^*(H,J))$ is well defined. Moreover, there is a one-to-one correspondence between $\crit(\mathcal{A}^H)$ and $\crit(\mathcal{A}^{H\circ\varphi})$, which preserves the Conley-Zehnder indices and by \eqref{MvarJH} there is a one-to-one correspondence between the Floer trajectories of the pair $(H,J)$ and the pair $\varphi^{*}(H,J)$. As a result, the corresponding Rabinowitz Floer homologies are isomorphic.
\end{proof}


\section{Invariance of RFH}
\label{sec:InvRFH}

The aim of this section is to show that Rabinowitz Floer homology is independent of the choice of almost complex structure and invariant under small compactly supported homotopies of the Hamiltonian.
In order to prove this, we first need to introduce the notion of perturbed flow lines with cascades. Let  $\Gamma=\{(H_{s},J_{s})\}_{s\in \mathbb{R}}$ be a homotopy of Hamiltonians and almost-complex structures, constant outside of $[0,1]$ and let $f_0$ and $f_1$ be coercive Morse functions on $\crit(\cA^{H_0})$ and $\crit(\cA^{H_1})$, respectively:
for any pair $(p,q)\in {\crit}(f_{0})\times {\crit}(f_{1})$ we consider the space of perturbed flow lines with cascades ${\mathscr{M}}^{\Gamma}(p,q)$. This is obtained as follows: we consider flow lines which consist of exactly one perturbed Floer trajectory, i.e. a solution of $\partial_{s}u=\nabla_{J_{s}}\cA^{H_{s}}$ with bounded energy, while all the other trajectories are Floer trajectories of $\cA^{H_{0}}$ or $\cA^{H_{1}}$. Then we quotient the space of such lines by the natural $\mathbb{R}$-action, as we did for the unperturbed trajectories in Section \ref{sec:compact}.

Proving independence of Rabinowitz Floer homology from the choice of almost complex structure and invariance under compactly supported perturbations requires uniform $L^{\infty}$ bounds along perturbed Floer trajectories associated to different families of homotopies. In the compact case those uniform bounds are obtained by a standard isolating neighborhood argument, similar to Theorem 3 from \cite{Floer1989}. Unfortunately, in the case of non-compact hypersurfaces the standard techniques are not directly applicable. In this section we will assume that all the necessary bounds hold and show how to prove invariance of Rabinowitz Floer homology consequently. Later we will apply this result to the case of tentacular Hamiltonians, a class for which we have established uniform bounds directly in \cite{pasquotto2017}.

Note that contrary to the unperturbed case, the action may not be monotonically increasing along the perturbed Floer trajectories. To deal with this phenomenon, we introduce the Novikov finiteness condition, which ensures that the action cannot decrease indefinitely along a perturbed Floer trajectory.
\begin{definition}
Let $\Gamma=\{(H_{s},J_{s})\}_{s\in \mathbb{R}}$ be a smooth homotopy of Hamiltonians and almost complex structures constant outside $[0,1]$. 
For a pair $a,b\in\mathbb{R}$ define
\begin{equation}
A(\Gamma,a,b) :=\inf \left\lbrace\begin{array}{r | l}
&\exists\  \Lambda_{i}\subseteq {\crit}(\mathcal{A}^{H_{i}})\quad i=0,1,\\
A \in (-\infty,b] & \cA^{H_{0}}(\Lambda_{0}) \geq a,\ \cA^{H_{1}}(\Lambda_{1})=A,\\
& \mathscr{M}_\Gamma(\Lambda_{0},\Lambda_{1})\neq \emptyset
\end{array}\right\rbrace,\label{eqn:Aab}
\end{equation}
\begin{equation}
B(\Gamma,a,b) :=\sup \left\lbrace\begin{array}{r | l}
& \exists\ \Lambda_{i}\subseteq {\crit}(\mathcal{A}^{H_{i}})\quad i=0,1,\\
B \in [a,+\infty) & \cA^{H_{0}}(\Lambda_{0}) = B,\ \cA^{H_{1}}(\Lambda_{1})\leq b,\\
& \mathscr{M}_\Gamma (\Lambda_{0},\Lambda_{1})\neq \emptyset
\end{array}\right\rbrace. \label{eqn:Bab}
\end{equation}
We say that the homotopy $\Gamma$ satisfies the \emph{Novikov finiteness condition} if for each pair $(a,b)\in {\critval}(\cA^{H_{0}})\times {\critval}(\cA^{H_{1}})$ the corresponding $A(\Gamma,a,b)$ and $B(\Gamma,a,b)$ are finite. 
\label{def:Nov}
\end{definition}

The first step towards constructing an isomorphism between $RFH(H_{0},J_{0})$ and $RFH(H_{1},J_{1})$ is to show that for any pair $(p,q)\in {\crit}(f_{0})\times {\crit}(f_{1})$ the corresponding moduli space $\mathscr{M}_\Gamma(p,q)$ is a smooth manifold without boundary, which can be compactified by moduli spaces of lower index. That will allow us to define a homomorphism between $RFH(H_{1},J_{1})$ and $RFH(H_{0},J_{0})$. To equip $\mathscr{M}_\Gamma(p,q)$ with a manifold structure we follow the same argument as in Theorem \ref{twr:trasvers}, 
but to achieve transversality, rather than perturbing the almost complex structures, we perturb the homotopy in its Hamiltonian component. Therefore we need uniform bounds on the moduli spaces not only for a fixed homotopy, but also for families of homotopies. Since the method for obtaining those bounds is substantially different from the analogous result in the compact case, we will show how to do it in Lemma \ref{lem:homo}.

We introduce below some notation which is necessary to formulate the lemma. For a compact subset $N\subseteq \Sigma$ we define 
\begin{equation}
\mathscr{C}({\cA^{H}},N):= \left\lbrace x\in {\crit} \left(\mathcal{A}^H\right)\ \Big|\ 0<|\mathcal{A}^H(x)| \quad \textrm{or}\quad x\in N\times\{0\}\right\rbrace.
\label{eqn:CAHY}
\end{equation}
Let $(H_{0},J_{0})$ and $(H_{1},J_{1})$ be pairs of Hamiltonians and compatible almost complex structures satisfying the assumptions of Theorem \ref{thm:defRFH}. Assume that there exists a compact subset $K\subseteq M$, such that the support of $H_{1}-H_{0}$ is contained in $K$. Let $\Gamma=\{(H_{s},J_{s})\}_{s\in \mathbb{R}}$ be a smooth homotopy satisfying (\ref{homotopy1}). For a perturbation $h\in C^{\infty}_{c}([0,1]\times K)$, we define the corresponding homotopy $\Gamma(h)$ by:
\begin{equation}
\Gamma(h):=\{(H_{s}+h_{s},J_{s})\}_{s\in \mathbb{R}}.
\label{eqn:Gh}
\end{equation}

\begin{lemma}\label{lem:homo}
Suppose there exists an open neighborhood $\mathscr{O}(\Gamma)$ of $0$ in $C_c^{\infty}([0,1]\times K)$, such that for every $h\in \mathscr{O}(\Gamma)$ and every pair of critical values $a\in {\critval}(\cA^{H_{0}})$ and $b\in  {\critval}(\cA^{H_{1}})$ the following holds:
\begin{enumerate}[label=(\roman*)]
\item $$
\sup_{h\in \mathscr{O}(\Gamma)}B(\Gamma(h),a,b)<+\infty \quad\textrm{and}\quad \inf_{h\in \mathscr{O}(\Gamma)}A(\Gamma(h),a,b)>-\infty;
$$
\item given a compact subset $N\subseteq H_{1}^{-1}(0)$ and a pair of connected components
\[
 (\Lambda_{0},\Lambda_{1}) \subseteq {\crit}(\cA^{H_{0}}) \times\mathscr{C}(\cA^{H_{1}},N), \nonumber
 \]
such that $a \leq \cA^{H_{0}}(\Lambda_{0})$ and $\cA^{H_{1}}(\Lambda_{1})\leq b$, 
the moduli space $\mathscr{M}_{\Gamma(h)}(\Lambda_{0},\Lambda_{1})$ admits a uniform bound on the energy, that is
$$
\sup_{u \in \mathscr{M}_{\Gamma(h)}(\Lambda_{0},\Lambda_{1})}\int_{\mathbb{R}}\|\partial_{s}u(s)\|^{2}_{L^2(S^1)\times \R}ds <+\infty,
$$
and its image is contained in a compact subset of $M\times \mathbb{R}$,
which only depends on $a,b$ and $N$.
\end{enumerate}
Then for a generic choice of $h\in \mathscr{O}(\Gamma)$, there exists a homomorphism 
$$
\Psi^{\Gamma(h)}: RFH(H_{1},J_{1}) \to RFH(H_{0},J_{0}).
$$
\end{lemma}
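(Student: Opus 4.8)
The plan is to construct $\Psi^{\Gamma(h)}$ on the chain level as a signed (mod $2$) count of perturbed flow lines with cascades of index difference zero, in close analogy with the construction of $RFH$ carried out in Theorems \ref{twr:trasvers}, \ref{twr:compcas} and \ref{thm:defRFH}. Note first that, since every $h\in C_c^\infty([0,1]\times K)$ vanishes for $s\le 0$ and $s\ge 1$, the homotopy $\Gamma(h)$ still joins $(H_0,J_0)$ to $(H_1,J_1)$ and satisfies \eqref{homotopy1}, so the endpoints and their Rabinowitz Floer complexes are unchanged. The construction then splits into four steps: (a) transversality of the perturbed moduli spaces for generic $h$; (b) compactness of those of index difference $\le 1$; (c) definition of the chain map and verification that its image satisfies the Novikov finiteness condition; and (d) the chain-homotopy identity $\partial^{H_0}\circ\Psi^{\Gamma(h)}=\Psi^{\Gamma(h)}\circ\partial^{H_1}$.

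For step (a) I would, as in the proof of Theorem \ref{twr:trasvers}, realise $\mathscr{M}_{\Gamma(h)}(\Lambda_0,\Lambda_1)$ as the zero set of a section $\overline{\partial}_{\Gamma(h)}$ of a Banach bundle over a space $\mathscr{B}_\delta(\Lambda_0,\Lambda_1)$ of exponentially convergent maps, and extend it to a universal section $S$ over $\mathscr{B}_\delta(\Lambda_0,\Lambda_1)\times\mathscr{O}^{l}(\Gamma)$, where $\mathscr{O}^{l}(\Gamma)$ is the $C^{l}$-closure of $\mathscr{O}(\Gamma)$. The key point, and the main departure from Theorem \ref{twr:trasvers}, is that transversality must now be obtained by perturbing the \emph{Hamiltonian} inside $[0,1]\times K$ rather than the almost complex structure, which is pinned by the $L^\infty$-bounds of \cite{pasquotto2017}. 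Here the energy identity for $\partial_su=\nabla_{J_s}\cA^{H_s}(u)$ is the crucial tool: along a perturbed trajectory whose image misses $K$ one has $\partial_sH_s\equiv 0$, so such a trajectory either is $s$-independent (when its asymptotic limits lie in the same critical component), in which case it is cut out transversally by the Morse-Bott property, or else is a genuine gradient trajectory of the autonomous functional $\cA^{H_0}$ for the $s$-family $\{J_s\}$, which can be made transverse by a preliminary generic choice of the $J$-component of $\Gamma$ away from $K$ (without disturbing the $L^\infty$-bounds). For every remaining trajectory $u=(v,\eta)$, arguing as in \cite[Thm.\ 4.10]{AbbonMerry2014} together with the unique continuation theorem \cite[Prop.\ 3.3]{BourOanc2010}, there is a non-empty open set of free points $(s,t)\in(0,1)\times S^1$ with $v(s,t)\in\operatorname{int}(K)\setminus\crit(H_s)$, $\partial_sv(s,t)\neq 0$, and $u(s,t)\neq u(s',t)$ for $s'\neq s$; supporting the Hamiltonian perturbation near such a point makes the vertical differential of $S$ surjective, so $S^{-1}(0)$ is a $C^{l}$-manifold. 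Sard--Smale, Taubes' trick, and a countable intersection over pairs of components (countably many by Lemma \ref{lem:critVal}) then yield a comeager subset $\mathscr{O}_{reg}\subseteq\mathscr{O}(\Gamma)$ for which every $\mathscr{M}_{\Gamma(h)}(\Lambda_0,\Lambda_1)$ is smooth of Fredholm dimension $\mu_{CZ}(\Lambda_1)-\mu_{CZ}(\Lambda_0)+\tfrac12(\dim\Lambda_0+\dim\Lambda_1)$, and hence, as in Proposition \ref{prop:casmfld}, $\mathscr{M}_{\Gamma(h)}(p,q)$ is smooth of dimension $\mu(q)-\mu(p)$ for $(p,q)\in\crit(f_0)\times\crit(f_1)$.

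For steps (b)--(d), fix $h\in\mathscr{O}_{reg}$. Hypothesis (ii) supplies the uniform $L^2$-energy bound and the uniform compact containment of images needed for Gromov compactness of the perturbed trajectories (no bubbling, $(M,\omega)$ being exact), while hypothesis (i) plays the role of the Novikov condition, bounding the action range swept out by a cascade; together with the shade construction of Lemma \ref{compact_shade} applied to perturbed trajectories and the coercivity of $f_0$, this confines the Morse parts of the cascades on $\Sigma_0$ to a fixed compact set, so the arguments of Theorem \ref{twr:compcas} and \cite[App.\ C]{Frauenfelder2004} give that $\mathscr{M}_{\Gamma(h)}(p,q)$ is compact whenever $\mu(q)-\mu(p)\le 1$: a finite set when $\mu(p)=\mu(q)$, a compact $1$-manifold with boundary when $\mu(q)-\mu(p)=1$. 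Setting $n_{\Gamma(h)}(p,q):=\#\mathscr{M}_{\Gamma(h)}(p,q)\bmod 2$ for $\mu(p)=\mu(q)$, I would define
\[
\Psi^{\Gamma(h)}(q):=\sum_{\substack{p\in\crit(f_0)\\ \mu(p)=\mu(q)}} n_{\Gamma(h)}(p,q)\,p .
\]
That this respects \eqref{eqn:fin} follows as in the proof of Theorem \ref{thm:defRFH}: if $n_{\Gamma(h)}(p,q)\neq 0$ and $\cA^{H_0}(p)\ge a$, then $\cA^{H_0}(p)\le\sup_h B(\Gamma(h),a,\cA^{H_1}(q))<\infty$ by (i), so $p$ lies over one of the finitely many critical values of $\cA^{H_0}$ in a compact interval (Lemma \ref{lem:critVal}), and on $\Sigma_0$ the reachable critical points of $f_0$ stay in a compact sublevel set of $f_0$ by (ii) and coercivity. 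Finally, for $\mu(q)-\mu(p)=1$ the boundary of the compact $1$-manifold $\mathscr{M}_{\Gamma(h)}(p,q)$ consists, by the gluing theory for flow lines with cascades (\cite[Cor.\ A.15]{Frauenfelder2004}, \cite[Thm.\ 9.2.3]{Audin2014}), exactly of the broken configurations $\mathscr{M}^{H_0}(p,p')\times\mathscr{M}_{\Gamma(h)}(p',q)$ with $\mu(p')=\mu(q)$ and $\mathscr{M}_{\Gamma(h)}(p,q')\times\mathscr{M}^{H_1}(q',q)$ with $\mu(q')=\mu(p)$; counting these modulo $2$ yields $\partial^{H_0}\circ\Psi^{\Gamma(h)}=\Psi^{\Gamma(h)}\circ\partial^{H_1}$, so $\Psi^{\Gamma(h)}$ is a chain map and descends to the desired homomorphism $RFH(H_1,J_1)\to RFH(H_0,J_0)$.

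I expect the main obstacle to be step (a): the usual transversality argument perturbs the almost complex structure \cite{CieliebakFrauenfelder2009,AbbonMerry2014}, which here is forbidden, and one must show that perturbing only the Hamiltonian, and only inside the prescribed compact window $[0,1]\times K$, still suffices — which is why the energy identity is used both to confine all ``non-trivial'' behaviour to that window and to dispose of the trajectories escaping $K$ via the preliminary choice of $\{J_s\}$. A secondary difficulty, with no counterpart in the unperturbed construction, is that the action is not monotone along perturbed trajectories, so that both uniform bounds (i) (in action) and (ii) (in energy and in space) are genuinely needed to rule out escape of cascades to infinity; and, as always in the cascade formalism, one must track the gradings \eqref{eqn:muSig}--\eqref{muRF} with care in order to land in the correct dimensions for the chain map (degree $0$) and the chain-homotopy relation (degree $1$).
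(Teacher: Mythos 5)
Your proposal follows essentially the same strategy as the paper's proof: establish uniform compact confinement of the perturbed cascades (using hypotheses (i) and (ii) together with the shade construction of Lemma~\ref{compact_shade} and coercivity of the Morse functions), obtain smooth manifold structure for the moduli spaces by perturbing the Hamiltonian component of the homotopy inside $[0,1]\times K$, define the chain map by counting index-zero trajectories, verify the Novikov finiteness condition via (i) and Lemma~\ref{lem:critVal}, and deduce the chain-map property from compactness of the $1$-dimensional moduli spaces. The paper's proof, however, spends almost all of its effort on the compactness step --- in particular, the three-case analysis for choosing $N$ (empty, $K(b)$, or a sublevel set of $f_1$), the decomposition of cascades into the three families $\mathfrak{M}_0, \mathfrak{M}_1, \mathfrak{M}_2$, and the definition of the perturbed shade $K_{\Gamma(h)}(a,b)$ --- and then relegates transversality, the chain-map construction, and the compactification/gluing to references (chiefly \cite[Thm.~11.1.6]{Audin2014} and \cite{Frauenfelder2004}), while you invert the emphasis: more detail on transversality, less on the confinement of the Morse parts.

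Two substantive remarks. First, your handling of transversality for trajectories whose image misses $K$ goes slightly beyond what the paper does. You propose a preliminary generic choice of the $J$-component of $\Gamma$ away from $K$, but the paper explicitly states that transversality is achieved ``rather than perturbing the almost complex structures, \ldots\ by perturbing the homotopy in its Hamiltonian component,'' and indeed the $L^\infty$-bounds from \cite{pasquotto2017} pin $J_s$ to $J_0$ outside the precompact set $\mathcal{V}$; if a trajectory escapes $\mathcal{V}$ the auxiliary $J$-perturbation is not available there. Your observation that such a trajectory, having zero Hamiltonian $s$-dependence, has energy equal to the action difference and hence is constant when its asymptotics lie in the same component is correct and useful, but in general one should either show that the relevant trajectories always enter $K$ (which the uniform bounds do not directly give) or justify more carefully how \cite[Thm.~11.1.6]{Audin2014} disposes of the remainder; the paper sweeps this under the citation. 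Second, your dimension formula $\dim\mathscr{M}_{\Gamma(h)}(p,q)=\mu(q)-\mu(p)$ disagrees in sign with the formula $\mu(p)-\mu(q)$ stated in the paper's proof; for the index-zero count defining $\Psi^{\Gamma(h)}$ this is immaterial, but the two conventions select different $1$-dimensional moduli spaces, so whichever one is used must be kept consistent with the sign conventions in \eqref{muRF} and Proposition~\ref{prop:casmfld} when checking $\partial^{H_0}\circ\Psi^{\Gamma(h)}=\Psi^{\Gamma(h)}\circ\partial^{H_1}$.
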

\begin{proof}
In this proof we will show how to obtain uniform bounds on the moduli spaces corresponding to the families of homotopies and later just refer to the standard Floer theoretical arguments for the actual definition of the homomorphisms.

Let us fix $h \in \mathscr{O}(\overline{\Gamma})$ and let $\Gamma(h)$ be the corresponding homotopy, as defined in (\ref{eqn:Gh}). We will start by proving compactness. Fix two Morse-Smale pairs $(f_{0},g_{0})$ and $(f_{1},g_{1})$ on ${\crit}(\cA^{H_{0}})$ and ${\crit}(\cA^{H_{1}})$, respectively, such that $f_{0}$ and $f_{1}$ are coercive. For a fixed pair $(p,q)\in {\crit}(f_{0})\times {\crit}(f_{1})$ we will show that there exists a compact subset of $M\times \mathbb{R}$ that contains all the broken trajectories with cascades between $p$ and $q$. 

Since $H_{0}$ and $H_{1}$ satisfy the assumptions of Lemma \ref{lem:critVal}, ${\crit}(H_{0})$ and ${\crit}(H_{1})$ are closed and discrete. Let $a:=\cA^{H_{0}}(p),\ b:=\cA^{H_{1}}(q)$. By assumption, for every fixed $h\in\mathscr{O}(\overline{\Gamma})$ the corresponding values $\alpha:=A(\Gamma(h),a,b)$ and $\beta:=B(\Gamma(h),a,b)$ are finite. As a result, the sets
$$
[a,\beta]\cap {\crit}\left(\cA^{H_{0}}\right),\qquad [\alpha,b]\cap {\crit}\left(\cA^{H_{1}}\right),
$$
are finite. 

We are going to consider different spaces of trajectories and argue that they all have bounded image in $M\times \R$. The first two spaces consist of unperturbed Floer trajectories between connected components of the critical set of a fixed Hamiltonian:
$$
\mathfrak{M}_{0}(a,b)  :=\bigcup_{\substack{\Lambda^{\pm}\subseteq {\crit}\left(\cA^{H_{0}}\right)\\  \cA^{H_{0}}\left(\Lambda^{\pm}\right)\in [a,\beta]}}\mathscr{M}(\Lambda^{-},\Lambda^{+}),\qquad \qquad
 \mathfrak{M}_{1}(a,b)  :=\bigcup_{\substack{\Lambda^{\pm}\subseteq {\crit}\left(\cA^{H_{1}}\right)\\ \cA^{H_{1}}\left(\Lambda^{\pm}\right)\in [\alpha,b]}}\mathscr{M}(\Lambda^{-},\Lambda^{+}),
$$
Since the pairs $(H_{0},J_{0})$ and $(H_{1},J_{1})$ satisfy the assumptions of Theorem \ref{thm:defRFH}, the images of $\mathfrak{M}_{0}(a,b)$ and $\mathfrak{M}_{1}(a,b)$ are bounded in $M\times\mathbb{R}$ (finite unions of bounded sets).

The third space consists of perturbed trajectories connecting components of the critical set of $\cA^{H_{0}}$ with components of the critical set of $\cA^{H_{1}}$:
$$
\mathfrak{M}_{2}(a,b):= \bigcup_{\substack{\Lambda_{0}\subseteq {\crit}\left(\cA^{H_{0}}\right)\\ \cA^{H_{0}}\left(\Lambda_{0}\right)\in [a,\beta]}}\ \bigcup_{\substack{\Lambda_{1}\subseteq \mathscr{C}\left(\cA^{H_{1}},N\right)\\ \cA^{H_{1}}\left(\Lambda_{1}\right)\in [\alpha,b]}}\mathscr{M}_{\Gamma(h)}(\Lambda_{0},\Lambda_{1}).
$$
Note that for a compact $N\subseteq (H_{1})^{-1}(0)$, the set $\mathfrak{M}_{2}(a,b)$ is a finite union of sets whose image in $M\times\mathbb{R}$ is by assumption (ii) bounded, so the image of $\mathfrak{M}_{2}(a,b)$ itself is also bounded.

Now we need to distinguish three different cases to show that the cascades between $p$ and $q$ are passing through $\mathfrak{M}_{2}(a,b)$. First of all, whenever $0\notin [\alpha,b]$, then by the definition of $\alpha$ (\ref{eqn:Aab}) there are no cascades passing through the non-compact component of ${\crit}(\cA^{H_{1}})$ and hence we can take $N=\emptyset$. If $0\in [\alpha,b)$, we set $N=K(b)$, with $K(b)$ defined as in (\ref{eqn:K(b)}), since all the cascades passing through the non-compact component of ${\crit}(\cA^{H_{1}})$ are in fact passing through $K(b)$. The last case we need to examine is $\cA^{H_{0}}(q)= b=0$: then we set $N=f_{1}^{-1}((-\infty,f_{1}(q)])$, since all the trajectories passing through the non-compact component of ${\crit}(\cA^{H_{1}})$ are confined in this case to $f_{1}^{-1}((-\infty,f_{1}(q)])$, which is also a compact subset of $\left(H_{1}\right)^{-1}(0)$ due to the coercivity of $f_{1}$. 

Let $\left\lbrace \left(\{u^{i}_{j}\}_{j=1}^{m_i},\{t^{i}_{j}\}_{j=1}^{m_i-1}\right)\right\rbrace_{i =1}^{k}$ be a perturbed broken trajectory with cascades between $p$ and $q$. This means the $u_j^i$'s are non-constant Floer trajectories, of which exactly one is a solution of $\partial_s u=\nabla_{J_s}\cA^{H_s}$, and the $t_j^i$'s represent the time necessary to flow along the positive gradient flow of $f_0$ or $f_1$, respectively, from one end of a Floer trajectory to the beginning of the next one (for more details, see \cite[Def. A5, Def. A.8]{Frauenfelder2004} and \cite[Def. 1.21]{Fauck2015}). Then for all $i=1,\dots k,\  j=1,\dots m$ 
$$
u^{i}_{j}\in \mathfrak{M}_{0}(a,b)\cup  \mathfrak{M}_{1}(a,b)\cup \mathfrak{M}_{2}(a,b),
$$
hence there exists a compact subset of $M\times \mathbb{R}$ that contains Floer components of all the broken trajectories with cascades between $p$ and $q$. In fact, by assumption (i) and (ii) the bounds are uniform and do not depend on the choice of $h\in \mathscr{O}(\Gamma)$.

Recall the evaluation maps (\ref{eqn:ev}) and define
\begin{equation}
K_{\Gamma(h)}(a,b):=\left( ev^{-}\left( \mathfrak{M}_{2}(a,b)\right)\cap \left( (H_{0})^{-1}(0) \times \{0\}\right)\right).
\label{eqn:KGab}
\end{equation}
It follows from boundedness of $\mathfrak{M}_{2}(a,b)$ in $M\times\mathbb{R}$, and a compactness argument as in Lemma \ref{compact_shade} that $K_{\Gamma(h)}(a,b)$ is a compact subset of $(H_{0})^{-1}(0)$. Uniform bounds on the Floer components together with the fact that both $f_{0}$ and $f_{1}$ are coercive, imply compactness of the Morse components of all the broken trajectories with cascades between $p$ and $q$.

Having obtained uniform bounds, we can now apply the usual Floer techniques to our setting: it follows from \cite[Thm. 11.1.6]{Audin2014} that for a generic choice of $h\in\mathscr{O}(\Gamma)$ and a pair $(p,q)\in {\crit}(f_{0})\times {\crit}(f_{1})$, all the moduli spaces ${\mathscr{M}}^{\Gamma(h)}(p,q)$ have the structure of a smooth manifold without boundary, of dimension $\mu(p)-\mu(q)$. That allows us to define a homomorphism between the chain complexes $CF(\cA^{H_{1}},f_{1})$ and $CF(\cA^{H_{0}},f_{0})$, by counting perturbed flow lines with cascades between points of equal indices. In order for this homomorphism to be well defined, one has to make sure that for all $q\in {\crit}(f_{1})$ and $a \in \mathbb{R}$
$$
\# \{ p \in {\crit}(f_{0})\ |\ \mu(p)=\mu(q),\ {\mathscr{M}}_{\Gamma(h)}(p,q)\neq \emptyset,\ \textrm{and}\ {\cA^{H}}(p) \geq a\} < + \infty.
$$
With $b=\cA^{H_{1}}(x)$, observe that ${\mathscr{M}}_{\Gamma(h)}(p,q)\neq \emptyset$ implies that $\cA^{H_{0}}(y)\in [a, \beta]$.

Recall that by Lemma \ref{lem:critVal} the number of connected components of ${\crit}(\cA^{H_{0}})\cap \left(\cA^{H_{0}}\right)^{-1}([a, \beta)\setminus \{0\})$ is finite and each of them is compact, hence
$$
\# \{ p \in {\crit}(f_{0})\ |\ \cA^{H_{0}}(p)\in [a, \beta]\setminus \{0\}\} < + \infty.
$$
On the other hand, if $\cA^{H_{0}}(y)=0$, then 
$$
y \in K(\beta) \cup f_{0}^{-1}((-\infty, \max_{K_{\Gamma(h)}(a,b)}f_{0}]),
$$
with the sets in the union above defined in (\ref{eqn:K(b)}) and (\ref{eqn:KGab}), respectively. The set $K(\beta)$ is compact by Lemma \ref{compact_shade}, as is the second set by virtue of a similar argument for $K_{\Gamma(h)}(a,b)$ and coercivity of $f_{0}$. As compact sets, they contain only finitely many critical points of $f_{0}$. This concludes the proof that the map between the chain complexes $CF(\cA^{H_{1}},f_{1})$ and $CF(\cA^{H_{0}},f_{0})$ is well defined. Moreover, for any pair $(p,q)\in {\crit}(f_{0})\times {\crit}(f_{1})$ the associated moduli space can be compactified (Theorem \ref{twr:compcas} and \cite[Thm. 11.1.6]{Audin2014}) by moduli spaces of lower index. This enables us to show that the associated map of chain complexes commutes with the differentials, hence it descends to the desired homomorphism $\Psi^{\Gamma(h)}$ on the homology level.
\end{proof}
Constructing the above homomorphism is only the first step in the proof of invariance. To prove that it is indeed an isomorphism one has to obtain similar bounds for the inverse homotopy $\Gamma^{-1}:=\{ H_{1-s},J_{1-s}\}_{s\in \mathbb{R}}$ and the concatenations
\begin{align}
\Gamma\#\Gamma^{-1} & := \{(H_{1-|2s-1|},J_{1-|2s-1|})\}_{s\in \mathbb{R}}\label{eqn:GG-1}\\
\Gamma^{-1}\#\Gamma & :=  \{(H_{|2s-1|},J_{|2s-1|})\}_{s\in \mathbb{R}}\label{eqn:G-1G}.
\end{align}
Note that if $\Gamma, \Gamma^{-1}, \Gamma\#\Gamma^{-1}$ and $\Gamma^{-1}\#\Gamma$ satisfy the assumptions of Lemma \ref{lem:homo}, then for a generic choice of $h$, $\Gamma(h), \Gamma(h)^{-1}, \Gamma(h)\#\Gamma(h)^{-1}$ and $\Gamma(h)^{-1}\#\Gamma(h)$ are regular (as intersections of generic sets are generic). In that case, by Lemma \ref{lem:homo}, the corresponding homomorphisms $\Psi^{\Gamma(h)}, \Psi^{\Gamma(h)^{-1}}$, $\Psi^{\Gamma(h)\#\Gamma(h)^{-1}}$ and $\Psi^{\Gamma(h)^{-1}\#\Gamma(h)}$ are well defined.

The next step would be to ensure that for a fixed generic $\bar{h}$ the $\lambda$-dependent families of homotopies $\{\overline{\Gamma}^{\lambda}(h)\}_{\lambda\in[0,1]}$ and $\{\widetilde{\Gamma}^{\lambda}(h)\}_{\lambda\in[0,1]}$, defined by 
\begin{align}
\overline{\Gamma}^{\lambda}(h) & :=\left\lbrace\left(\bar{H}_{\lambda(1-|2s-1|)}+\bar{h}_{\lambda(1-|2s-1|)}+h^{\lambda}_{s}, \bar{J}_{\lambda(1-|2s-1|)} \right)\right\rbrace_{s\in\mathbb{R}},\label{eqn:Gl(h)1}\\
\widetilde{\Gamma}^{\lambda}(h) & :=\left\lbrace\left(\bar{H}_{ |2\lambda s-1|}+\bar{h}_{|2\lambda s-1|}+h^{\lambda}_{s}, \bar{J}_{|2\lambda s-1|} \right)\right\rbrace_{s\in\mathbb{R}}, \label{eqn:Gl(h)2}
\end{align}
satisfy the assumptions of Lemma \ref{lem:homo} uniformly with respect to $\lambda \in [0,1]$ and $h\in \mathscr{O}^{\lambda}(\overline{\Gamma},\bar{h})\subseteq C^{\infty}_{c}([0,1]^{2}\times K)$. Observe that those homotopies of homotopies connect $\Gamma(\bar{h})\#\Gamma(\bar{h})^{-1}$ with the constant homotopy $(\bar{H}_{0},\bar{J}_{0})$ and $\Gamma(\bar{h})^{-1}\#\Gamma(\bar{h})$ with the constant homotopy $(\bar{H}_{1},\bar{J}_{1})$, respectively. If they do satisfy the assumptions of Lemma \ref{lem:homo} uniformly, then by \cite[Thm. 11.3.11, Prop. 11.2.8]{Audin2014} we can conclude that $\Psi^{\Gamma(\bar{h})\#\Gamma(\bar{h})^{-1}}$ and $\Psi^{\Gamma(\bar{h})^{-1}\#\Gamma(\bar{h})}$ are in fact isomorphisms on the homology level.

Finally, one would have to consider the $R$-dependent families of homotopies $\{\overline{\Gamma}^{R}(h)\}_{R\geq 0}$ and $\{\widetilde{\Gamma}^{R}(h)\}_{R\geq 0}$, defined by
\begin{align*}
\overline{\Gamma}^{R}(h) & := 
\left\lbrace\left(\bar{H}_{(1+e^{-R})(R+\frac{1}{2}-|s-\frac{1}{2}|)}+\bar{h}_{(1+e^{-R})(R+\frac{1}{2}-|s-\frac{1}{2}|)}+h^{R}_{s},\bar{J}_{(1+e^{-R})(R+\frac{1}{2}-|s-\frac{1}{2}|)}\right)\right\rbrace_{s\in\mathbb{R}}, \\
\widetilde{\Gamma}^{R}(h) & := \left\lbrace\left(\bar{H}_{(1+e^{-R})(|s-\frac{1}{2}|-R)}+\bar{h}_{(1+e^{-R})(|s-\frac{1}{2}|-R)}+h^{R}_{s}, \bar{J}_{(1+e^{-R})(|s-\frac{1}{2}|-R)}\right)\right\rbrace_{s\in\mathbb{R}},
\end{align*}
and check that they satisfy the assumptions of Lemma \ref{lem:homo} uniformly with respect to $R \geq 0$ and $h \in \mathscr{O}^{R}(\overline{\Gamma},\bar{h})\subseteq C_{c}^{\infty}([0,+\infty)\times[0,1]\times K)$. Observe that for $R=0,\ \overline{\Gamma}^{0}(h)= \Gamma(\bar{h})\#\Gamma(\bar{h})^{-1}$ and $\widetilde{\Gamma}^{0}(h)= \Gamma(\bar{h})^{-1}\#\Gamma(\bar{h})$. If $\{\overline{\Gamma}^{R}(h)\}_{R\geq 0}$ and $\{\widetilde{\Gamma}^{R}(h)\}_{R\geq 0}$ satisfy assumptions of Lemma \ref{lem:homo} uniformly, then it follows by \cite[Prop. 11.2.9]{Audin2014} that 
$$
\Psi^{\Gamma(\bar{h})^{-1}}\circ\Psi^{\Gamma(\bar{h})}=\Psi^{\Gamma(\bar{h})\#\Gamma(\bar{h})^{-1}}=\Psi^{\overline{\Gamma}^{0}(h)}\quad \textrm{and}\quad\Psi^{\Gamma(\bar{h})}\circ\Psi^{\Gamma(\bar{h})^{-1}}=\Psi^{\Gamma(\bar{h})^{-1}\#\Gamma(\bar{h})}=\Psi^{\widetilde{\Gamma}^{0}(h)}.
$$
Since constant homotopies induce the identity,
this shows that $\Psi^{\Gamma(\bar{h})}$ is an isomorphism between $RFH(H_{1},J_{1})$ and $RFH(H_{0},J_{0})$.
\section{Tentacular RFH}\label{sec:tent}

To prove Theorem \ref{thm:tentRFH}, we will show that the conditions in the definition of tentacular Hamiltonians guarantee the appropriate bounds for the Rabinowitz Floer equations based on the estimates in \cite{pasquotto2017}. We first verify that admissible Hamiltoians satisfy Hypotheses (H1)-(H3) in \cite{pasquotto2017}, and then we apply Theorem 1 therein to obtain the uniform bounds on the Floer trajectories which are necessary to construct the Rabinowitz Floer homology. Afterwards, we show that hypothesis \ref{h4} for strongly tentacular Hamiltonians implies uniform continuity of \ref{PO+}.
 This, combined with the result from Theorem \ref{thm:MB(K)}, ensures genericity of the Morse-Bott property in the affine space of compactly supported perturbations (under the assumption of  uniform continuity of \ref{PO+}), and allows us to construct the Rabinowitz Floer homology on a generic subset of strongly tentacular Hamiltonians. Finally, we describe how to obtain uniform estimates for homotopies of homotopies in order to prove invariance of Rabinowitz Floer homology with respect to different choices of the almost complex structure and compactly supported perturbations of the Hamiltonians. As a result, we will be able to extend the definition of Rabinowitz Floer homology to the whole set of strongly tentacular Hamiltonians.

\subsection{Strongly tentacular Hamiltonians}

For the reader's convenience, we recall hypotheses (H1)-(H3) from \cite{pasquotto2017}. 
In that paper, we consider a smooth Hamiltonian function $H$ on $(\mathbb{R}^{2n},\omega_{0})$ satisfying the following properties:
\begin{description}\label{def:H}
\item[\textbf{H1}\namedlabel{item:H1}{H1}] There exists a global Liouville vector field $X^{\dagger}$, and constants $c_{1},c_{2}>0$, $ c_{3}\geq 0$, such that for all $x\in \mathbb{R}^{2n}$ the following holds:
\[
|X^{\dagger}(x)| \leq  c_{1}(|x|+1), \quad \textrm{and} \quad dH(X^{\dagger})(x) \geq  c_{2}|x|^{2}-c_{3}.
\]
\item[\textbf{H2}\namedlabel{item:H2}{H2}] The function $H$ grows at most quadratically at infinity, that is, there exists a constant $L\geq 0$ such that
$$
\sup_{x\in \mathbb{R}^{2n}} \|D^{3}H_{x} \| \cdot |x| \leq  L <+\infty.
$$
\item[\textbf{H3}\namedlabel{item:H3}{H3}] There exist constants $c_{4},c_{5},\nu>0$ and a Liouville vector field $X^{\ddagger}$ defined on $H^{-1}((-\nu,\nu))$, such that
\begin{align*}
 |X^{\ddagger}(x)| & \leq c_{4}(|x|+1) \qquad \forall\ x\in H^{-1}((-\nu,\nu)),\\
& \inf_{H^{-1}((-\nu,\nu))}dH(X^{\ddagger}) \geq c_{5} >0.
\end{align*}
\end{description}

Observe that (H1) and (H2) are identical to \ref{h1} and \ref{h2} (under the additional assumption of asymptotic regularity of the Liouville vector field). In the following lemmas we will show that (H3) follows from \ref{h1}-\ref{h3}.

\begin{lemma}
\label{lem:uniform1}
For every admissible Hamiltonian $H$ there exist constants $\delta>0$ and $c_\delta>0$, and a vector field $X\in \scrL(\R^{2n})$, such that $dH(X)(x)\ge c_\delta$ for all $x\in H^{-1}(-\delta,\delta)$.
\end{lemma}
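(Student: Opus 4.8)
The plan is to produce $X$ as a convex combination of the two Liouville fields supplied by the axioms. The starting observation is that if $X_0,X_1$ are Liouville vector fields on $(\R^{2n},\omega_0)$, then so is $X_t:=tX_1+(1-t)X_0$ for every $t\in[0,1]$: indeed $\iota_{X_t}\omega_0=t\,\iota_{X_1}\omega_0+(1-t)\,\iota_{X_0}\omega_0$ is a convex combination of primitives of $\omega_0$, hence again a primitive, and $\|DX_t\|\le t\|DX_1\|+(1-t)\|DX_0\|$ stays uniformly bounded, so $X_t\in\scrL(\R^{2n})$. Applying this with $X_0=X^\ddagger$ and $X_1=X^\dagger$ (the fields from \ref{h1} and \ref{h3}), every $X_t$ is an admissible candidate and $dH(X_t)=t\,dH(X^\dagger)+(1-t)\,dH(X^\ddagger)$.

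First I would fix a scale. By \ref{h1} there is $R_0>0$ (one may take $R_0=\sqrt{2c'/c}$) with $dH(X^\dagger)(x)\ge\tfrac{c}{2}|x|^2$ for $|x|\ge R_0$ and $dH(X^\dagger)(x)\ge -c'$ for all $x$. On the compact set $\Sigma\cap\{|x|\le R_0\}$, axiom \ref{h3} together with continuity gives $dH(X^\ddagger)\ge\varepsilon_0>0$; choosing $t:=\varepsilon_0/\bigl(2(\varepsilon_0+c')\bigr)$ then yields $dH(X_t)\ge\tfrac12\varepsilon_0>0$ on $\Sigma\cap\{|x|\le R_0\}$, while for $x\in\Sigma$ with $|x|>R_0$ one has $dH(X_t)(x)\ge t\tfrac{c}{2}|x|^2+(1-t)\,dH(X^\ddagger)(x)>0$, since \ref{h3} makes the second term positive. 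Fixing this value of $t$ and setting $X:=X_t$, we conclude that $dH(X)$ is positive on all of $\Sigma$, uniformly bounded below by some $2c_0>0$, and moreover bounded below by $t\tfrac{c}{2}|x|^2$ for $x\in\Sigma$ with $|x|\ge R_0$.

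It remains to upgrade this to a lower bound on a full neighbourhood $H^{-1}((-\delta,\delta))$, which I would do by contradiction: if no such $\delta$ works, pick $x_k$ with $|H(x_k)|\to 0$ and $dH(X)(x_k)\to 0$. If $\{x_k\}$ has a bounded subsequence, its limit lies on $\Sigma$ and has $dH(X)\le 0$, contradicting the previous step. Otherwise $|x_k|\to\infty$. From \ref{h1} and the linear bound $|X^\dagger(x)|\le c_1(|x|+1)$ (a consequence of asymptotic regularity) one gets $|\nabla H(x)|\ge dH(X^\dagger)(x)/|X^\dagger(x)|\gtrsim |x|$ for $|x|$ large; flowing $x_k$ a short distance along $\pm\nabla H/|\nabla H|$ toward $\Sigma$ then shows $\dist(x_k,\Sigma)\lesssim |H(x_k)|/|x_k|\to 0$. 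On the other hand, integrating the bound in \ref{h2} twice gives $\|D^2H(x)\|\lesssim\log(2+|x|)$ and $\|DH(x)\|\lesssim|x|\log(2+|x|)$, so the gradient of the function $dH(X^\ddagger)=\langle\nabla H,X^\ddagger\rangle$ obeys $\|\nabla(dH(X^\ddagger))(x)\|\le\|D^2H(x)\|\,|X^\ddagger(x)|+\|DX^\ddagger(x)\|\,|\nabla H(x)|\lesssim|x|\log(2+|x|)$. Combining the distance estimate with this Lipschitz bound along a short curve joining $x_k$ to $\Sigma$, the value $dH(X^\ddagger)(x_k)$ differs from its positive value at a nearby point of $\Sigma$ by at most $O(\log|x_k|)$; hence $dH(X)(x_k)\ge t\tfrac{c}{2}|x_k|^2-O(\log|x_k|)\to+\infty$, contradicting $dH(X)(x_k)\to 0$. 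Unwinding the estimates, the radii involved ($R_0$, the threshold beyond which $|\nabla H|\gtrsim|x|$, and the one past which $t\tfrac{c}{2}|x|^2$ dominates the logarithmic error) depend only on \ref{h1}--\ref{h3} and not on $\delta$, so there is a fixed $R_2$ with $dH(X)\ge 1$ on $H^{-1}((-\delta,\delta))\setminus\{|x|\le R_2\}$ for all small $\delta$; shrinking $\delta$ so that $dH(X)\ge\mu>0$ on the compact set $H^{-1}([-\delta,\delta])\cap\{|x|\le R_2\}$ (possible since $dH(X)>0$ on $\Sigma\cap\{|x|\le R_2\}$) and taking $c_\delta:=\min(1,\mu)$ finishes the proof.

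The main obstacle is the behaviour at infinity in the last step: far out, $X^\ddagger$ carries no a priori positivity, and $dH(X^\ddagger)$ could in principle be of size $|x|^2\log|x|$, which would swamp the quadratic gain $t\tfrac{c}{2}|x|^2$ coming from $X^\dagger$. What rescues the argument is that \ref{h1} forces $|\nabla H|$ to grow linearly, so the slab $H^{-1}((-\delta,\delta))$ is only $O(\delta/|x|)$ thin at infinity, while \ref{h2} keeps the derivative of $dH(X^\ddagger)$ of size $O(|x|\log|x|)$; the product of thickness and derivative is merely $O(\log|x|)$, which the quadratic term dominates. The one point I would check carefully is that all the auxiliary radii can indeed be chosen independently of $\delta$, so that $\delta$ may be taken as small as needed at the very end without circular reasoning.
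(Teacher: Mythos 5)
Your proof is correct and follows the same strategy as the paper's: form a convex combination $X = tX^\dagger + (1-t)X^\ddagger$ of the two Liouville fields; combine \ref{h1} and \ref{h3} to get a uniform (and, at infinity, quadratic) lower bound for $dH(X)$ on $\Sigma$; and then transport that bound to the thin slab $H^{-1}((-\delta,\delta))$, exploiting the fact that \ref{h1} forces $|\nabla H(x)|\gtrsim |x|$, so the slab has thickness $O(\delta/|x|)$. The paper performs the transport by parametrising the slab via the normalised gradient flow $\phi^s$ with $H\circ\phi^s(x)=s$ and bounding $\tfrac{d}{ds}\,dH(X_\varepsilon)(\phi^s(x))$, whereas you use an explicit distance estimate and a pointwise Lipschitz bound on $dH(X^\ddagger)$; the mechanism is the same, merely dressed as a contradiction. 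One place where you are actually more careful than the paper: \ref{h2} only yields $\|D^2H(x)\|\lesssim\log(2+|x|)$ (and $|\nabla H(x)|\lesssim |x|\log(2+|x|)$), not the uniform bound $\|D^2H(x)\|\le a$ that the paper asserts; you correctly observe that the resulting logarithmic error is swamped by the quadratic gain from $X^\dagger$, which is also what rescues the paper's argument.
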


\begin{proof}
Let $X^\dagger$ and $X^\ddagger$ be the asymptotically regular Liouville vector fields whose existence is guaranteed by \ref{h1} and \ref{h3}, respectively.

Choose $0<\varepsilon<1$ and consider the asymptotically regular Liouville vector field $X_\varepsilon = (1-\varepsilon) X^\ddagger + \varepsilon X^\dagger$.
It follows from \ref{h1} that there exist constants $c,c'>0$, such that for all $x\in \R^{2n}$
\[ 
dH(X_\varepsilon)(x) = (1-\varepsilon) dH(X^\ddagger)(x) + \varepsilon dH(X^\dagger)(x)
 \ge (1-\varepsilon) dH(X^\ddagger)(x)- \varepsilon c' + \varepsilon c| x| ^{2}.
\]
By the contact type condition \ref{h3} we have that for every $r>0$ there exists $\varepsilon'>0$, such that $dH(X^\ddagger)(x)\ge \varepsilon'>0$ for all $x\in \Sigma \cap\{x\in \R^{2n}~:~| x| \le r\}$. Consequently, for small enough $\varepsilon$ there exists a constant $c''>0$, such that the above inequality becomes
\begin{equation}
dH(X_\varepsilon)(x)\ge c'' + c''| x| ^{2}, 
\label{dHXepsi}
\end{equation}
for all $x\in \Sigma$ --- a uniform contact type condition.
 
Asymptotic regularity of $X^\dagger$ and $X_\varepsilon$ and condition \ref{h2} imply that there exists a constant $a>0$, such that the following estimates holds for all $x\in \R^{2n}$:
\begin{equation}\label{constants}
 \|D X_\varepsilon(x)\| ,\| D^2 H(x)\| \le a \quad \textrm{and} \quad | X^\dagger(x)|, |X_\varepsilon(x)| , |\nabla H(x) |\le a + a |x|.
\end{equation}

Moreover, axiom \ref{h1} implies that for $|x|$ large enough
\begin{align}
|\nabla H(x)| |X^\dagger(x)| & \ge \langle \nabla H(x),X^\dagger(x)\rangle \ge c|x|^2-c',\nonumber \\
|\nabla H(x) |  & \ge \frac{c|x|^2-c'}{|X^\dagger(x)|} \ge \frac{c|x|^2-c'}{a+a|x|}\ge  \frac{c}{a}|x|-\frac{c+c'}{a}. \label{nablaH1}
\end{align}
In particular, we have that $|\nabla H(x)|\geq \varepsilon'>0$ outside of a sufficiently large ball. In fact, since $0$ is a regular value of $H$ (and hence $\nabla H$ does not vanish along $\Sigma=H^{-1}(0)$), we can conclude that for small enough $\varepsilon'>0$ there exists $\delta>0$, such that 
\begin{equation}
|\nabla H(x)|\geq \varepsilon'\quad \textrm{for all} \quad x\in H^{-1}((-\delta,\delta)).
\label{nablaH2}
\end{equation}
Combining \eqref{constants} with \eqref{nablaH1} we obtain the following estimate:
$$
 \frac{|X_\varepsilon(x)|}{|\nabla H(x)|}\leq \frac{(a|x|+a)^2}{c|x|^2-c'}.
$$
In particular, $\frac{|X_\varepsilon(x)|}{|\nabla H(x)|}<2\frac{a^2}{c}$ must hold for $|x|$ large enough, since the quotient converges to $\frac{a^2}{c}$ as $|x|\rightarrow +\infty$. This, together with \eqref{nablaH2} implies that there exists $a'>0$, such that
\begin{equation}
\frac{|X_\varepsilon(x)|}{|\nabla H(x)|}<a' \quad \textrm{for all} \quad x\in H^{-1}((-\delta,\delta)).
\label{XENH}
\end{equation}

We define the function $\sigma:H^{-1}((-\delta,\delta))\to\R$ by
\[
\sigma(x) :=dH(X_\varepsilon)(x)=\langle\nabla H(x),X_\varepsilon(x)\rangle.
\]
By the uniform contact type condition, \eqref{dHXepsi} for all $x\in \Sigma$ we have
\begin{equation}\label{sigma}
 \sigma(x)\ge   c''+c'' |x|^2\geq c''>0.
\end{equation}

Consider the normalized gradient flow defined by the equation $\frac{d}{ds}\phi^s(x) = \frac{\nabla H(x)}{| \nabla H(x)| ^2}$. 
This provides local coordinates $(x,s) \in \Sigma \times (-\delta,\delta)$,
via $H\circ \phi^s(x) = s$. In order to extend the estimate on $\sigma$ to a neighborhood of $\Sigma$, we estimate the $s$-derivative of $\sigma(\phi^s(x))$ :
\[
\begin{aligned}
\left| \frac{d}{ds} \sigma\circ \phi^s(x)\right|  &= \left| \left\langle D^2H\left(\frac{d}{ds}\phi^s(x)\right), X_\varepsilon (\phi^s(x)) \right\rangle + \left\langle \nabla H (\phi^s(x)),DX_\varepsilon  \left(\frac{d}{ds} \phi^s(x)\right) \right\rangle \right|  \\
& \le \|D^2H(\phi^s(x))\|\frac{|X_\varepsilon (\phi^s(x))|}{|\nabla H (\phi^s(x))|}+\|DX_\varepsilon (\phi^s(x))\|\le  a (a' +1)=:a'',
\end{aligned}
\]
uniformly for all $x\in \Sigma$. In the last line we have used the estimates from \eqref{constants} and \eqref{XENH}. It follows that for all $x\in \Sigma$ and all $s\in (-\delta, \delta)$ we have
\[
\sigma\circ\phi^s(x) \geq \sigma (x) - a''|s| \geq c''-a'' |s|,
\]
where the last inequality comes from \eqref{sigma}. Consequently, 
there exists a (possibly smaller) $\delta>0$ such that $\sigma(x) \ge \frac{c''}{2}>0$ for all $x\in H^{-1}((-\delta,\delta))$.

We denote the vector field $X_\varepsilon$ by $X$, which completes the proof.
\end{proof}

\begin{remark}
We have thus concluded that \ref{h1}-\ref{h3} imply (H3) in \cite{pasquotto2017}. Notice that if we were to consider only asymptotically regular vector fields, then the two sets of assumptions \ref{h1}-\ref{h3} and (H1)-(H3) would in fact be equivalent.
\end{remark}

In the following lemma we will show yet another property of admissible Hamiltonians: the length of the non-degenerate closed characteristics is linearly bounded by action.

\begin{lemma}
Let $H$ be an admissible Hamiltonian. Then there exist a constant $\bar{c}>0$ and an open subset $B(H)\subseteq C^{\infty}_{c}(\R^{2n})$, such that for all $h\in B(H)$ and all $(v,\eta)\in \crit(\cA^{H+h}),\ \eta\neq 0$,
$$
l(v)=\int |\partial_{t}v|dt\leq \bar{c} \left|\cA^{H+h}(v,\eta)\right|.
$$
\label{lem:lenght}
\end{lemma}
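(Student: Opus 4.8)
The plan is to use the fact that a critical point $(v,\eta)$ of $\cA^{H+h}$ with $\eta\neq 0$ is a closed characteristic lying on $(H+h)^{-1}(0)$, so that $\partial_t v=\eta X_{H+h}(v)$ and $(H+h)(v(t))\equiv 0$; in particular the term $-\eta\int_0^1(H+h)(v)$ in the action vanishes. Since the value of $\cA^{H+h}$ does not depend on the chosen Liouville form, I would evaluate it using the primitive $\lambda_X:=\iota_X\omega_0$ of the asymptotically regular Liouville vector field $X\in\scrL(\R^{2n})$ furnished by Lemma \ref{lem:uniform1}. A direct computation gives $\lambda_X(\partial_t v)=\omega_0\bigl(X(v),\partial_t v\bigr)=\eta\,\omega_0\bigl(X(v),X_{H+h}(v)\bigr)=\eta\, d(H+h)(X)(v)$, so that
\[
\cA^{H+h}(v,\eta)=\eta\int_0^1 d(H+h)(X)(v(t))\,dt .
\]
Since the standard almost complex structure is an isometry of the Euclidean metric, $|X_{H+h}(x)|=|\nabla(H+h)(x)|$, hence $l(v)=|\eta|\int_0^1|\nabla(H+h)(v(t))|\,dt$, and the whole statement reduces to the pointwise inequality
\[
|\nabla(H+h)(x)|\le \bar c\, d(H+h)(X)(x)\qquad\text{for all }x\in(H+h)^{-1}(0),
\]
with a constant $\bar c$ uniform over $h\in B(H)$: the right-hand side is then automatically positive, so $|\cA^{H+h}(v,\eta)|=|\eta|\int_0^1 d(H+h)(X)(v)$ and the bound on $l(v)$ follows by integration.

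To establish this pointwise estimate I would first note that the proof of Lemma \ref{lem:uniform1} actually yields more than its statement records: combining \eqref{sigma} with the bound on the $s$-derivative of $\sigma\circ\phi^s$ (and shrinking $\delta$ if necessary) gives the \emph{quadratic} lower bound $dH(X)(x)\ge c_1+c_2|x|^2$ on $H^{-1}((-\delta,\delta))$ for suitable $c_1,c_2>0$, which is ultimately a reflection of axiom \ref{h1}. On the other hand, \eqref{constants} gives $|\nabla H(x)|\le a+a|x|$ for all $x$. I would then define $B(H)\subseteq C^\infty_c(\R^{2n})$ by the conditions $\|h\|_\infty<\delta$ (so that $(H+h)^{-1}(0)\subseteq H^{-1}((-\delta,\delta))$), $\|\langle\nabla h,X\rangle\|_\infty<c_1/2$, and $\|\nabla h\|_\infty$ sufficiently small; each of these restricts to a continuous constraint on $C^\infty_c(K)$ for every compact $K$, so $B(H)$ is open in the inductive limit topology. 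For $h\in B(H)$ and $x\in(H+h)^{-1}(0)$ we get $d(H+h)(X)(x)=dH(X)(x)+\langle\nabla h,X\rangle(x)\ge \tfrac{c_1}{2}+c_2|x|^2$, while $|\nabla(H+h)(x)|\le a+a|x|+\|\nabla h\|_\infty\lesssim (1+|x|^2)^{1/2}$; comparing a quantity of order $(1+|x|^2)^{1/2}$ from above with one of order $(1+|x|^2)$ from below produces the desired uniform $\bar c$.

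Finally, integrating this inequality along the orbit gives
\[
l(v)=|\eta|\int_0^1|\nabla(H+h)(v(t))|\,dt\le \bar c\,|\eta|\int_0^1 d(H+h)(X)(v(t))\,dt=\bar c\,\bigl|\cA^{H+h}(v,\eta)\bigr|,
\]
which is the claim. The main obstacle is the second step: one must carefully extract the quadratic lower bound on $dH(X)$ on a full neighborhood of $\Sigma$ (not merely on $\Sigma$, and not merely the constant bound recorded in Lemma \ref{lem:uniform1}), and check that it survives the compactly supported perturbation $h$ with constants uniform over an honest open neighborhood $B(H)$ of $0$ — the comparison $|\nabla(H+h)|\lesssim d(H+h)(X)$ is then routine, as it only pits the linear growth of $\nabla H$ against the quadratic growth coming from \ref{h1}.
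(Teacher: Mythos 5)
Your argument is correct, but it departs from the paper's proof in a structurally noticeable way. The paper splits the estimate in two using two different Liouville fields: the field $X^\dagger$ from axiom \ref{h1} is used (together with \ref{h2}) to control the length integrand, $|X_{H+h}(x)| \leq 2c\bigl(d(H+h)(X^\dagger)(x) + 2c'\bigr)$ for all $x$, which after evaluating the action against $\lambda_{X^\dagger}$ gives $l(v) \leq 2c\bigl(|\cA^{H+h}(v,\eta)| + 2c'|\eta|\bigr)$; separately, the field produced by Lemma \ref{lem:uniform1} is used, via the \emph{constant} lower bound that lemma records, only to bound the period, $|\eta| < 2c\,|\cA^{H+h}(v,\eta)|$, and the two bounds are then chained. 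You instead fold everything into a single pointwise inequality $|\nabla(H+h)(x)| \leq \bar c\, d(H+h)(X)(x)$ on the perturbed level set, using only the one field from Lemma \ref{lem:uniform1}, and integrate once; the period bound then falls out for free. This is arguably cleaner, but it demands more from Lemma \ref{lem:uniform1} than its statement gives: you need a \emph{quadratic} lower bound $dH(X)(x) \geq c_1 + c_2|x|^2$ on the full tube $H^{-1}((-\delta,\delta))$, not just the constant $c_\delta$. You correctly locate where this comes from (the uniform contact-type inequality \eqref{dHXepsi} on $\Sigma$ plus the derivative bound on $\sigma\circ\phi^s$), but the extraction also requires converting $|x|$ for $x\in\Sigma$ into $|\phi^s(x)|$ for a tube point, using $|\nabla H|\geq\varepsilon'$ from \eqref{nablaH2} to control how far the normalized gradient flow moves a point — routine, but it does need to be written down, and you rightly flag it as the crux. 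In short, the paper's proof is more modular (it uses Lemma \ref{lem:uniform1} as stated and pays with a two-field, two-step estimate), while yours is more direct but relies on a sharpened version of that lemma.
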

\begin{proof}
By axiom \ref{h1}, there exists a Liouville vector field $X^\dagger\in\scrL(\R^{2n})$, such that $dH(X^{\dagger})$ grows at least quadratically. Moreover, by \ref{h2} the Hamiltonian vector field $X_{H}$ grows at most linearly. Therefore, there exist $c, c'>0$, such that for all $x\in \R^{2n}$
\begin{equation}
|X_{H}(x)| \leq c \left( dH(X^{\dagger})(x)+c'\right).
\label{eqn:cond1}
\end{equation}
On the other hand, by Lemma \ref{lem:uniform1} there exists a Liouville vector field $X^{\ddagger}$, such that $dH(X^{\ddagger})$ is bounded away from $0$ in a neighborhood of $\Sigma=H^{-1}(0)$, therefore
\begin{align}
\inf_{\Sigma}dH(X^{\ddagger})\geq \frac{1}{c}.
\label{eqn:cond2}
\end{align}
Fixing $c$ to satisfy both \eqref{eqn:cond1} and \eqref{eqn:cond2} we define a subset of $C_{c}^{\infty}(\R^{2n})$ in the following way:
$$
B(H):= \left\lbrace\begin{array}{c|c}
 & \left|X_{H+h}(x)\right|< 2c ( d(H+h)(X^{\dagger})(x)+2c')\quad \forall\ x\in \mathbb{R}^{2n} \\
{\smash{\raisebox{.5\normalbaselineskip}{ $h\in C_{c}^{\infty}(\R^{2n})$}}} & d(H+h)(X^{\ddagger})(x)> \frac{1}{2c}\quad \forall\ x\in (H+h)^{-1}(0)
 \end{array}\right\rbrace
$$

Since both conditions defining $B(H)$ are open, $B(H)$ itself is open in $C_{c}^{\infty}(\R^{2n})$. Moreover, by \eqref{eqn:cond1} and \eqref{eqn:cond2} we can see that $0\in B(H)$ thus $B(H)$ is an open neighborhood of $0$ in $C_{c}^{\infty}(\R^{2n})$.
 
If we fix $h\in B(H)$ and take $(v,\eta)\in \crit(\cA^{H+h})$, with $\eta\neq 0$, we see that the period of the loop is bounded by the action:
$$
\left|\cA^{H+h}(v,\eta)\right|
=\left|\int \iota_{X^\ddagger}\omega(\partial_t v)-\eta\int (H+h)(v)\right|=\left| \int \omega (X^\ddagger,\eta X^{H+h})\right|=
|\eta|\int d(H+h)(X^{\ddagger})>\frac{|\eta|}{2c}.
$$
Let us now calculate the length of the loop $v$. Using the conditions defining $B(H)$ we get a uniform bound by the action:
\begin{align*}
l(v) & = \int |\partial_{t}v|=|\eta|\int \left|X_{H+h}(v)\right| \leq 2c |\eta|\int \left( d(H+h)(X^{\dagger})+2c'\right) \\
& \leq 2c(\left|\cA^{H+h}(v,\eta)\right| +2|\eta|c') < 2c \left|\cA^{H+h}(v,\eta)\right| (1+4 c' c)=\bar{c}\left|\cA^{H+h}(v,\eta)\right|,
\end{align*}
with $\bar{c}:=2c(1+4 c' c)$.
\end{proof}

In the following lemma we will show that the last property, \ref{h4} implies the uniform continuity of \ref{PO}. 

\begin{lemma}
Every tentacular Hamiltonian satisfies the axiom of uniform continuity of \ref{PO}.
\label{lem:PO}
\end{lemma}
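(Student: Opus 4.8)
The plan is to combine the linear length bound of Lemma \ref{lem:lenght} with a convexity argument for the function $F$ supplied by axiom \ref{h4}. A tentacular Hamiltonian is in particular admissible, so Lemma \ref{lem:lenght} provides a constant $\bar c>0$ and an open neighbourhood $B_1(H)$ of $0$ in $C^\infty_c(\R^{2n})$ with $l(v)\le\bar c\,|\cA^{H+h}(v,\eta)|$ for all $h\in B_1(H)$ and all $(v,\eta)\in\crit(\cA^{H+h})$ with $\eta\ne0$. By axiom \ref{h4} fix $R_0>0$ so large that every $x\in\Sigma$ with $|x|\ge R_0$ satisfies $\{H,F\}(x)\ne0$ or $\{H,\{H,F\}\}(x)>0$; after enlarging $R_0$ we may also assume $F$ is defined, and $dH\ne0$, on an open neighbourhood of $\Sigma\cap\{|x|\ge R_0\}$. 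Put $K_n:=\overline{B(0,R_0+\bar c\,n)}$, a nested exhaustion of $\R^{2n}$.

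The heart of the argument is the claim that, once $h\in C^\infty_c(K_n)$ is small enough, no non-constant closed characteristic $w$ of $X_{H+h}$ can have $w(\R/\Z)\subseteq(H+h)^{-1}(0)\cap\{|x|\ge R_0\}$. Given such a $w$ with period $\eta\ne0$, set $\phi(t):=F(w(t))$; then $\phi'(t)=\eta\{H+h,F\}(w(t))$ and $\phi''(t)=\eta^2\{H+h,\{H+h,F\}\}(w(t))$, and I would show $\phi'(t)=0$ forces $\phi''(t)>0$. Where $|w(t)|>R_0+\bar c\,n$ the support of $h$ is missed and $w(t)\in\Sigma$, so this is exactly \ref{h4}. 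Where $R_0\le|w(t)|\le R_0+\bar c\,n$ the point lies in the compact annulus $\bar A_n:=\{R_0\le|x|\le R_0+\bar c\,n\}$; on the compact set $\Sigma\cap\bar A_n$, axiom \ref{h4} together with continuity gives a uniform bound $\max(|\{H,F\}|,\{H,\{H,F\}\})\ge\varepsilon_n>0$, and a $C^2(K_n)$-smallness condition on $h$ (depending on $\varepsilon_n$ and on the $C^2$-size of $H$ and $F$ over $\bar A_n$) then transfers this inequality to $(H+h)^{-1}(0)$, yielding $\phi''(t)>0$ there as well. A $C^2$ periodic function with $\phi'=0\Rightarrow\phi''>0$ would have $\phi''>0$ at its maximum, which is impossible; hence no such $w$ exists.

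To finish, choose $B(H)\subseteq B_1(H)$ so small that for every $n$ the perturbations in $B(H)\cap C^\infty_c(K_n)$ meet the smallness requirement of the previous paragraph and keep $(H+h)^{-1}(0)$ inside the domain of $F$ and inside $\{dH\ne0\}$ near $\Sigma$; this is legitimate because in the inductive limit topology on $C^\infty_c(\R^{2n})$ one may prescribe independently, for each member of the exhaustion, how small the perturbations supported there must be. Now take $n\in\N$, $h\in B(H)\cap C^\infty_c(K_n)$ and $(v,\eta)\in\crit(\cA^{H+h})$ with $0<|\cA^{H+h}(v,\eta)|\le n$. Then $\eta\ne0$ and $v(\R/\Z)\subseteq(H+h)^{-1}(0)$ automatically. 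If $v(\R/\Z)\not\subseteq K_n$, choose $t_1$ with $|v(t_1)|>R_0+\bar c\,n$; since $l(v)\le\bar c\,|\cA^{H+h}(v,\eta)|\le\bar c\,n$ by Lemma \ref{lem:lenght}, we get $|v(t)|\ge|v(t_1)|-l(v)>R_0$ for all $t$, so $v$ violates the core claim. Hence $v(\R/\Z)\subseteq K_n\cap(H+h)^{-1}(0)$, which is precisely the uniform continuity of \ref{PO}.

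The main difficulty is exactly the perturbation step: \ref{h4} is a non-uniform, pointwise condition on the non-compact hypersurface $\Sigma$ and does not a priori persist under compactly supported perturbations. The remedy is to localise it to the compact annuli $\bar A_n$, where it does become uniform, and to let the admissible perturbation size decay along the exhaustion, which the inductive limit topology permits. Everything else is a routine combination of Lemma \ref{lem:lenght}, the convexity trick, and bookkeeping of constants.
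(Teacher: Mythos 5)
Your overall strategy matches the paper's: combine the length bound of Lemma \ref{lem:lenght} with a maximum-principle argument for $F\circ v$ at the point where it attains its maximum, and handle the perturbation by pushing the inequalities of \ref{h4} from $\Sigma$ to $(H+h)^{-1}(0)$. The weak link is the final packaging of the perturbation conditions.

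The claim ``in the inductive limit topology on $C^{\infty}_{c}(\R^{2n})$ one may prescribe independently, for each member of the exhaustion, how small the perturbations supported there must be'' is incorrect in the form you need it. Definition \ref{variousPO} requires a \emph{single} open neighbourhood $B(H)\subseteq C_c^\infty(\R^{2n})$ with $B(K_n)=B(H)\cap C^\infty_c(K_n)$. If your $C^2(K_n)$-smallness threshold $\delta_n$ (which depends on $\varepsilon_n:=\inf_{\Sigma\cap\bar A_n}\max(|\{H,F\}|,\{H,\{H,F\}\})$ and on the $C^2$-size of $H,F$ over $\bar A_n$, both of which can deteriorate as $n\to\infty$) tends to $0$, then any fixed $h\neq 0$ with $\mathrm{supp}\,h\subseteq K_1$ belongs to $C^\infty_c(K_n)$ for every $n$, and the requirement $\|h\|_{C^2(K_n)}<\delta_n$ for all $n$ forces $\|h\|_{C^2(K_1)}<\inf_n\delta_n=0$, i.e.\ $h=0$. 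So the $B(H)$ you would build this way is $\{0\}$, which is not open. The repair is to make the constraint \emph{local} rather than uniform over each $K_n$: the perturbed inequality at a point $x\in(H+h)^{-1}(0)\cap\bar A_n$ only involves $h$ and its derivatives up to order $2$ near $x$, so the admissible size of $h$ should be prescribed near $x$, comparing $h$-dependent and $h$-independent quantities at the same point. This is exactly what the paper's definition of $\widetilde B(H)$ via conditions (a)--(c) does: each condition is a pointwise inequality on $(H+h)^{-1}(0)\setminus B(r)$, so $\widetilde B(H)\cap C^\infty_c(K)$ is $C^2$-open for every compact $K$, and $\widetilde B(H)$ is genuinely open in the inductive limit topology. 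With that localization your argument goes through; as written, however, the step from the per-$n$ smallness requirement to an open $B(H)$ is a genuine gap. (A minor cosmetic difference: you obtain $v(\R/\Z)\subseteq K_n$ with $K_n=\overline{B(0,R_0+\bar c n)}$ directly from the length bound, without introducing the average loop $\bar v$ as the paper does; that part is fine and slightly cleaner.)
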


\begin{proof}
We will start by constructing a sequence of compact sets $K_{a}\subseteq \mathbb{R}^{2n}$ and the corresponding sequence of open subsets $B(K_{a})\subseteq C_{c}^{\infty}(K_{a})$, and later we will show that they satisfy the hypothesis of uniform continuity of (PO).

Let $F:\mathcal{N}(\Sigma)\to\R$ be a function as in condition \ref{h4}, where $\mathcal{N}(\Sigma)$ is a neighborhood of $\Sigma=H^{-1}(0)$. Define
$$
A := \{x\in\mathcal{N}(\Sigma)\ |\ \{H,F\}\neq 0\},\qquad
B := \{x\in\mathcal{N}(\Sigma)\ |\ \{H,\{H,F\}\}>0\}.
$$
Then there exists $r>0$, such that $\Sigma\setminus B(r)\subseteq A\cup B$, where $B(r)$ denotes the open all of radius $r$ and centre in the origin.

Let $B(H)\subseteq C^{\infty}_{c}(\mathbb{R}^{2n})$ be the open neighborhood of $0$ from Lemma \ref{lem:lenght}.  We define a subset $\widetilde{B}(H)\subseteq B(H)$ in the following way: $h\in \widetilde{B}(H)$ if it satisfies the following conditions:
\begin{enumerate}[label=(\alph*)]
\item $(H+h)^{-1}(0)\setminus B(r)\subseteq A\cup B$;
\item for all $x\in (H+h)^{-1}(0)\setminus (B(r)\cup A)$
$$
|\{H,\{h,F\}\}(x)+\{h,\{H,F\}\}(x)+\{h,\{h,F\}\}(x)|<\{H,\{H,F\}\}(x);
$$
\item for all $x\in (H+h)^{-1}(0)\setminus (B(r)\cup B)$
$$
|\{h,F\}(x)|<|\{H,F\}(x)|.
$$
\end{enumerate}
First observe that $\widetilde{B}(H)\neq\emptyset$, since by axiom \ref{h4} we know that $0\in \widetilde{B}(H)$. Moreover, since the sets $A$ and $B$ are open and so are the above conditions, it follows that $\widetilde{B}(H)$ is an open neighborhood of $0$ in $C_{c}^{\infty}(\R^{2n})$. Now for every $a\in\mathbb{N}$, let $K_{a}$ be a closed ball of radius $r+2\bar{c}a$, where $\bar{c}$ is as in Lemma \ref{lem:lenght}
$$
K_{a}:=B_{0}(r+2\bar{c}a), \qquad B(K_{a}):=C_{c}^{\infty}(K_{a})\cap \widetilde{B}(H).
$$
Naturally, $\{K_{a}\}_{a\in\mathbb{N}}$ is an exhaustion of $\mathbb{R}^{2n}$ by compact sets. We will show that $\{B(K_{a})\}_{a\in\mathbb{N}}$ satisfies the hypothesis of uniform continuity of \ref{PO}. Fix $a\in\mathbb{N}$, $h\in B(K_{a})$, and take $(v,\eta)\in \crit(\cA^{H+h})$, with $0< |\cA^{H+h}(v,\eta)| \leq a$. By Lemma \ref{lem:lenght} we know that the length of the loop is uniformly bounded by the action $l(v)\leq \bar{c}a$. Let $\bar{v}:=\int v(t)dt$ be the average, then $|v(t)-\bar{v}|\leq l(v)\leq \bar{c} a$ and thus $v \subseteq B_{\bar{v}}(\bar{c} a)$. We will show now that $\bar{v}$ satisfies $|\bar{v}| \le \bar{c} a + r$. Suppose not, then it would follow that $|v(t)| \ge \left| |\bar{v}| - |v(t)-\bar{v}|\right| >r$, hence by \ref{a} we have 
\begin{equation}
v(S^{1})\subseteq (H+h)^{-1}(0)\setminus B(r)\subseteq A\cup B.
\label{S1subAB}
\end{equation}
The function $F\circ v$ obtains its maximum at the point $t_{0}\in S^{1}$, where 
$$
F\circ v(t_{0}) =\max_{S^{1}}F\circ v,\qquad
\frac{d}{dt}F\circ v (t_{0}) = 0 \qquad \textrm{and} \qquad \frac{d^{2}}{dt^{2}}F\circ v (t_{0})\leq 0.
$$
Since $(v,\eta)\in \crit\left(\mathcal{A}^{H+h}\right)$, we have
\begin{align}
0 & =\frac{d}{dt}F\circ v (t_{0}) = d F(\partial_{t}v(t_{0})) =\eta dF\left(X_{H+h}(v(t_{0})\right)= \eta \{H+h,F\}(v(t_{0})),\label{dtFv}\\
0 & \geq \frac{d^{2}}{dt^{2}}F\circ v (t_{0}) = \eta^{2} d\left(\{H+h,F\}\right)(X_{H+h}(v(t_{0}))=\eta^{2}\{H+h,\{H+h,F\}\}(v(t_{0})).\label{dt2Fv}
\end{align}
By \eqref{S1subAB} we know that $v(t_{0})\in A\cup B$. Therefore, we can consider the two following cases:
\begin{enumerate}
\item $v(t_{0})\notin A$: then $v(t_{0})\in (H+h)^{-1}(0)\setminus (B(r)\cup A)$ and we have that \ref{b} contradicts \eqref{dt2Fv} in the following way:
\begin{align*}
0 & \geq \frac{1}{\eta^2}\frac{d^{2}}{dt^{2}}F\circ v (t_{0}) = \{H+h,\{H+h,F\}\}(v(t_{0}))\\
 & = \{H,\{h,F\}\}(v(t_{0}))+\{h,\{H,F\}\}(v(t_{0})) +\{h,\{h,F\}\}(v(t_{0})) +\{H,\{H,F\}\}(v(t_{0})) >0.
\end{align*}
\item $v(t_{0})\notin B$: then $v(t_{0})\in (H+h)^{-1}(0)\setminus (B(r)\cup B)$ and we have that \ref{c}  contradicts with \eqref{dtFv}:
$$
0 =\left|\frac{1}{\eta}\frac{d}{dt}F\circ v (t_{0})\right| = \left|\{H+h,F\}(v(t_{0}))\right| \geq |\{H,F\}(v(t_{0}))|-|\{h,F\}(v(t_{0}))|>0.
$$
\end{enumerate}
Summarizing, we can conclude that for $h\in B(K_{a})$, if $(v,\eta)$ is a non-degenerate critical point of $\cA^{H+h}$ with action $0<|\cA^{H+h}(v,\eta)|\leq a$, then $v(S^{1})\subseteq K_{a}$. In other words, \ref{PO} is uniformly continuous at $H$.
\end{proof}

However, in order to construct Rabinowitz Floer homology, we need a stronger condition, namely uniform continuity of \ref{PO+}. In the following lemma we will show that the choice of a coercive function in Axiom \ref{h4} ensures uniform continuity of \ref{PO+}. As a result, \ref{PO+} is uniformly continuous for the strongly tentacular Hamiltonians. In fact we can state the result in greater generality, for a Hamiltonian on any exact symplectic manifold, not necessarily on $\R^{2n}$.

\begin{lemma}
Let $H$ and $F$ be smooth Hamiltonians on a symplectic manifold $(M,\omega)$. If $F$ is coercive and the set
$$
K_{0} :=\left\lbrace x\in H^{-1}(0)\  \Big|\  \{H,\{H,F\}\}(x) \leq 0\quad \textrm{and}\quad \{H,F\}(x)= 0\right\rbrace, 
$$
is compact, then \ref{PO+} is uniformly continuous at $H$.
 \label{lem:PO+}
\end{lemma}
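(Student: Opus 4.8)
The plan is to run the maximum--principle argument from the proof of Lemma~\ref{lem:PO}, but to exploit the \emph{global} coercivity of $F$ in place of a bound on the action window. The key observation is that for any critical point $(v,\eta)$ of $\cA^{H+h}$ with $\eta\neq0$, the point $t_0\in S^1$ at which $F\circ v$ attains its maximum maps under $v$ into the compact set $\mathrm{supp}(h)\cup K_0$; coercivity of $F$ then confines the whole loop $v(S^1)$ to the sublevel set of $F$ determined by that compact set, and a suitable choice of this sublevel set as $K_n$ closes the argument.

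First I would note that a critical point $(v,\eta)\in\crit(\cA^{H+h})$ with $|\cA^{H+h}(v,\eta)|>0$ necessarily has $\eta\neq0$ --- any critical point with $\eta=0$ is a constant loop and hence has zero action --- and that along such a critical point $\partial_t v=\eta X_{H+h}(v)$ while $(H+h)(v(t))=0$ for all $t$, the latter because $(H+h)\circ v$ is $t$-independent by conservation of energy and integrates to zero. Let $t_0\in S^1$ be a maximum of $F\circ v$. Computing $(F\circ v)'(t_0)$ and $(F\circ v)''(t_0)$ via the equation for $v$, exactly as in the proof of Lemma~\ref{lem:PO}, gives $\eta\{H+h,F\}(v(t_0))=0$ and $\eta^2\{H+h,\{H+h,F\}\}(v(t_0))\le0$; since $\eta\neq0$,
\[
\{H+h,F\}(v(t_0))=0,\qquad \{H+h,\{H+h,F\}\}(v(t_0))\le0.
\]
Now there are two cases: either $v(t_0)\in\mathrm{supp}(h)$, or $h$ vanishes on a neighborhood of $v(t_0)$, in which case $H+h$ agrees with $H$ there together with all its derivatives, so the two displayed relations hold with $H$ replacing $H+h$, and since $H(v(t_0))=(H+h)(v(t_0))=0$ we conclude $v(t_0)\in K_0$. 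In either case $v(t_0)\in\mathrm{supp}(h)\cup K_0$, a compact set that depends only on $\mathrm{supp}(h)$.

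To conclude I would take the exhaustion to consist of sublevel sets of $F$. Coercivity of $F$ makes each $F^{-1}((-\infty,c])$ compact, and these sets exhaust $M$ as $c\to+\infty$. Fix $c_\ast\ge\sup_{K_0}F$ (with $\sup_{\emptyset}F:=-\infty$), put $c_n:=c_\ast+n$ and $K_n:=F^{-1}((-\infty,c_n])$, so that $\{K_n\}_{n\in\mathbb{N}}$ is an exhaustion of $M$ by compact sets with $K_0\subseteq K_n$ and $\max_{K_n}F\le c_n$. Set $B(H):=C^\infty_c(M)$, so $B(K_n)=C^\infty_c(K_n)$. Then for $h\in B(K_n)$ and $(v,\eta)\in\crit(\cA^{H+h})$ with $|\cA^{H+h}(v,\eta)|>0$, the previous paragraph gives $t_0$ with $v(t_0)\in\mathrm{supp}(h)\cup K_0\subseteq K_n$, hence $F(v(t))\le F(v(t_0))\le c_n$ for all $t$, i.e.\ $v(S^1)\subseteq F^{-1}((-\infty,c_n])=K_n$; combined with $v(S^1)\subseteq(H+h)^{-1}(0)$ this is precisely the uniform continuity of \ref{PO+} at $H$.

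I do not expect a genuine analytic obstacle here: once the compact set $\mathrm{supp}(h)\cup K_0$ is identified the argument is elementary. The one point requiring care is the self-referential nature of the exhaustion --- one needs $\mathrm{supp}(h)\subseteq K_n$ to force $v(S^1)\subseteq K_n$ --- which is exactly why the $K_n$ are chosen as $F$-sublevel sets that already contain $K_0$, so that $\max_{K_n}F$ does not exceed the level cutting out $K_n$. The degenerate cases where $K_0$ or $\mathrm{supp}(h)$ is empty are harmless: the maximum argument then shows that there are no critical points of nonzero action lying outside $\mathrm{supp}(h)$, so the required inclusion still holds (vacuously when $h=0$ and $K_0=\emptyset$).
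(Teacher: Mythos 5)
Your proof is correct and follows essentially the same approach as the paper's: locate the maximum of $F\circ v$, extract the first and second order conditions on the Poisson brackets of $H+h$ with $F$, reduce to Poisson brackets of $H$ with $F$ away from $\mathrm{supp}(h)$, and identify $v(t_0)$ as lying in $\mathrm{supp}(h)\cup K_0$, from which coercivity of $F$ confines the whole loop. The paper phrases the last step by contradiction (assume $v(S^1)\not\subseteq K_n$, deduce $v(t_0)\notin K_n$, hence $v(t_0)\notin\mathrm{supp}(h)$, hence $v(t_0)\in K_0\subseteq K_n$) while you argue directly, but this is cosmetic; your exhaustion $K_n=F^{-1}((-\infty,c_*+n])$ with $c_*\ge\sup_{K_0}F$ is the same as the paper's $K_n=F^{-1}((-\infty,n+\sup_{K_0}F])$. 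Your explicit observations that $\eta\neq0$ forces the derivative conditions to be meaningful, and that $(H+h)(v)\equiv 0$ along the critical loop, are both correct and tighten up points the paper leaves implicit.
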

\begin{proof}
For $n=0, 1, 2, \dots$ let us define
$$
K_{n} := F^{-1}((-\infty,n+\sup_{K_{0}}F]).
$$
Since $F$ is coercive, every $K_{n}$ is compact and we obtain an exhaustion of $M$ by compact sets
$$
M = \bigcup_{n\in \mathbb{N}} K_{n}.
$$
We will show that the family $\{K_{n}\}_{n\in\mathbb{N}}$ satisfies the hypothesis of uniform continuity of \ref{PO+}. More precisely, we will show that for every $n=1, 2, \dots$, and every $h \in C^{\infty}_{c}(K_{n})$  
$$
(v,\eta)\in \crit(\cA^{H+h})\setminus \left(\cA^{H+h}\right)^{-1}(0) \qquad \textrm{implies} \qquad v(t) \in K_{n}\quad \forall\ t\in S^{1}.
$$
Fix $h \in C^{\infty}_{c}(K_{n})$, take $(v,\eta) \in \crit(\mathcal{A}^{H+h})\setminus \left(\mathcal{A}^{H+h}\right)^{-1}(0)$ and calculate the maximum of $F$ over $v$. There exists a $t_{0}\in S^{1}$, such that
$$
F\circ v(t_{0}) =\max_{S^{1}}F\circ v,\qquad
\frac{d}{dt}F\circ v (t_{0}) = 0 \qquad \textrm{and} \qquad \frac{d^{2}}{dt^{2}}F\circ v (t_{0})\leq 0.
$$
Now suppose that $v(S^{1})\setminus K_{n} \neq \emptyset$. Then $v(t_{0})\notin K_{n}$, so in particular it is also not in the support of $h$. Since $(v,\eta)\in \crit\left(\mathcal{A}^{H+h}\right)$, we would have
\begin{align*}
0 & =\frac{d}{dt}F\circ v (t_{0}) = d F(\partial_{t}v(t_{0})) =\eta dF\left(X_{H}(v(t_{0})\right)= \eta \{H,F\}(v(t_{0})),\\
0 & \geq \frac{d^{2}}{dt^{2}}F\circ v (t_{0}) = \eta^{2} d\left(\{H,F\}\right)(X_{H}(v(t_{0}))=\eta^{2}\{H,\{H,F\}\}(v(t_{0})) .
\end{align*}
But this would imply that $v(t_{0})\in K_{0}\subseteq K_{n}$, which leads us to contradiction and concludes the proof.
\end{proof}

\subsection{Estimates for Floer trajectories} In this subsection we will recall Theorem 1. from \cite{pasquotto2017}, which guarantees uniform estimates for tentacular Hamiltonians.

Let us consider the following setting: let $J_{0}$ be the standard almost complex structure on $\mathbb{R}^{2n}$. Fix an admissible Hamiltonian $H:\mathbb{R}^{2n}\to \mathbb{R}$, a non-empty compact set $K\subseteq \mathbb{R}^{2n}$, an open, precompact subset $\mathcal{V} \subseteq \mathbb{R}^{2n}$, and a constant $\mathfrak{y}>0$. Let $\mathscr{O}(H)\subseteq C_{c}^{\infty}(K)$ be the open subset associated to $H$ and $K$ by \cite[Lem. 2.1]{pasquotto2017} and let $\tilde{c},\varepsilon_{0}>0$ be the constants associated to $H$ and $K$ by \cite[Lem. 3.1]{pasquotto2017}.
\begin{theorem}
\label{twr:ModuliCompact}
Let $\Gamma:=\{(H_s,J_s)\}_{s\in\R}$ be a homotopy as described in \eqref{homotopy1} with
\begin{align}
H_s\in H+\mathscr{O}(H),\quad J_s \in \mathscr{J}^{\infty}\big(\mathbb{R}^{2n},\omega_{0},\mathcal{V} & \times \left((-\infty,-\mathfrak{y})\cup( \mathfrak{y}, \infty)\right)\big), \nonumber \\
\left(\tilde{c}+\tfrac{1}{\varepsilon_{0}}\|J\|_{L^{\infty}}^{\frac{3}{2}}\right)\|\partial_{s}H_{s}\|_{L^{\infty}} & < \frac{1}{8}. \label{eqn:Hs}
\end{align}
If we fix $a,b\in \mathbb{R}$ and a compact subset $N\subseteq H^{-1}_{1}(0)$, then for each pair $(\Lambda_{0},\Lambda_{1})$ of connected components
$$
\Lambda_{0} \subseteq \crit(\cA^{H_{0}})\cap (\cA^{H_{0}})^{-1}([a,\infty)),\qquad
\Lambda_{1}  \subseteq \mathscr{C}(\cA^{H_{1}},N) \cap (\cA^{H_{1}})^{-1}((-\infty,b]),
$$
the image in $\mathbb{R}^{2n+1}$ of the moduli space $\mathscr{M}^{\Gamma}(\Lambda_{0},\Lambda_{1})$ is uniformly bounded.
\end{theorem}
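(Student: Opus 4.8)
The plan is to obtain this statement as a direct transcription of \cite[Thm.\ 1]{pasquotto2017} into the notation of the present paper, so the only real task is to check that the hypotheses match up.

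First I would confirm that every admissible Hamiltonian (one satisfying \ref{h1}--\ref{h3}) satisfies Hypotheses (H1)--(H3) of \cite{pasquotto2017}. Conditions (H1) and (H2) are verbatim \ref{h1} and \ref{h2}: the linear growth bound $|X^{\dagger}(x)|\le c_{1}(|x|+1)$ required in (H1) is automatic because the Liouville vector field in \ref{h1} belongs to $\scrL(\R^{2n})$, and an asymptotically regular Liouville field grows at most linearly. For (H3) I would invoke Lemma~\ref{lem:uniform1}, which produces an asymptotically regular Liouville vector field $X$ and constants $\delta,c_{\delta}>0$ with $dH(X)(x)\ge c_{\delta}$ for all $x\in H^{-1}((-\delta,\delta))$; asymptotic regularity again supplies the linear bound $|X(x)|\le c_{4}(|x|+1)$, so (H3) holds with $\nu=\delta$ and $c_{5}=c_{\delta}$. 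In particular the set $\mathscr{O}(H)\subseteq C^{\infty}_{c}(K)$ of \cite[Lem.\ 2.1]{pasquotto2017} and the constants $\tilde c,\varepsilon_{0}>0$ of \cite[Lem.\ 3.1]{pasquotto2017} are well defined for $H$, exactly as presumed in the setup preceding the statement.

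Next I would observe that the three conditions imposed on the homotopy $\Gamma=\{(H_{s},J_{s})\}_{s\in\R}$ --- that it be of the form \eqref{homotopy1} with $H_{s}\in H+\mathscr{O}(H)$, that $J_{s}\in\mathscr{J}^{\infty}\big(\R^{2n},\omega_{0},\mathcal{V}\times((-\infty,-\mathfrak{y})\cup(\mathfrak{y},\infty))\big)$, and that the quantitative smallness condition \eqref{eqn:Hs} on $\|\partial_{s}H_{s}\|_{L^{\infty}}$ holds --- are term by term the hypotheses under which \cite[Thm.\ 1]{pasquotto2017} applies. With these in place, that theorem asserts that, for fixed $a,b\in\R$ and a fixed compact $N\subseteq H_{1}^{-1}(0)$, the image in $\R^{2n+1}$ of $\mathscr{M}^{\Gamma}(\Lambda_{0},\Lambda_{1})$ is bounded by a constant depending only on $a$, $b$, $N$ and the fixed data $H,K,\mathcal{V},\mathfrak{y}$, uniformly over all pairs of connected components $\Lambda_{0}\subseteq\crit(\cA^{H_{0}})\cap(\cA^{H_{0}})^{-1}([a,\infty))$ and $\Lambda_{1}\subseteq\mathscr{C}(\cA^{H_{1}},N)\cap(\cA^{H_{1}})^{-1}((-\infty,b])$, and also uniformly over all homotopies $\Gamma$ within the admissible class. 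This is precisely the claimed conclusion.

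The substantive content of the theorem therefore resides entirely in \cite{pasquotto2017}, where the $L^{\infty}$-bound is proved through an a priori estimate combining the coercive lower bound $dH(X^{\dagger})(x)\ge c|x|^{2}-c'$ with the sub-quadratic growth of $D^{3}H$ along Floer cylinders, with every constant tracked carefully so as to survive compactly supported perturbations and homotopies. Within the present paper the only genuine work is Lemma~\ref{lem:uniform1}, already established above; consequently the step I expect to be most delicate here is simply the bookkeeping of aligning the two sets of axioms (in particular verifying that the constants and the perturbation set $\mathscr{O}(H)$ invoked in the statement are indeed those attached by \cite{pasquotto2017} to an admissible $H$), after which the conclusion is immediate.
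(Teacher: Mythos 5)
Your approach matches the paper's: this theorem is, essentially by design, a restatement of \cite[Thm.\ 1]{pasquotto2017}, and the only genuine verification needed is that the hypotheses line up, which you do correctly (identifying (H1), (H2) with \ref{h1}, \ref{h2} and invoking Lemma~\ref{lem:uniform1} for (H3), with asymptotic regularity supplying the required linear growth of the Liouville fields).

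However, you overlook a discrepancy that the paper explicitly flags and that prevents the statement from being a literal ``direct transcription.'' In \cite[Thm.\ 1]{pasquotto2017} the compact set $N$ is taken to be a subset of $H_0^{-1}(0)$, whereas the theorem as stated here takes $N\subseteq H_1^{-1}(0)$. This is not a typographical convenience: it reflects the change of convention in this paper from coercive Morse functions with compact \emph{superlevel} sets (as in \cite{Wisniewska2017}) to coercive Morse functions with compact \emph{sublevel} sets, which forces the compactness condition to be imposed at the \emph{end} of the perturbed Floer trajectory rather than at its start. You write that \cite[Thm.\ 1]{pasquotto2017} ``asserts that, for $\dots$ a fixed compact $N\subseteq H_1^{-1}(0)$, the image $\dots$ is bounded,'' which misattributes the conclusion to the cited theorem. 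The correct move, and the one the paper makes, is to observe that the estimates in \cite{pasquotto2017} are symmetric under exchanging the roles of the endpoints $s\to-\infty$ and $s\to+\infty$, so that requiring the trajectories to \emph{end} in a compact subset of $\Sigma_1$ rather than \emph{start} in a compact subset of $\Sigma_0$ does not alter the argument; with that remark in place the conclusion transfers. Without acknowledging and discharging this asymmetry, the step ``direct transcription, hence done'' has a small but real hole.
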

Note that the theorem stated above differs slightly from \cite[Thm. 1]{pasquotto2017}, since we consider $N$ to be a compact subset of $H^{-1}_{1}(0)$ rather than $H^{-1}_{0}(0)$. This change is due to the fact that we dropped the assumption of bounded topology of the hypersurface, and thus need to require the Morse function used in the construction of the cascades to have compact sublevel sets rather than compact superlevel sets as in \cite{Wisniewska2017}. Yet, the change does not influence the arguments in the proof - we just assume that the Floer trajectories end in a compact subset of $\mathbb{R}^{2n}$ rather than start in it. Thus the theorem continues to hold also in this setting.

\subsection{RFH for tentacular Hamiltonians}
The goal of this subsection is to prove Theorem \ref{thm:tentRFH}. We do this by first proving, in Lemma \ref{lem:RFHdense}, that Rabinowitz Floer homology is well defined on a dense subset of pairs of strongly tentacular Hamiltonians and almost complex structures. Secondly, in Lemma \ref{lem:invJ} we show that the Rabinowitz Floer homology on this set is in fact independent of the choice of the almost complex structure. Finally, in Lemma \ref{lem:invH} we show that the Rabinowitz Floer homology is invariant under small enough compactly supported perturbations of the Hamiltonians, allowing us to define it on the whole subset of strongly tentacular Hamiltonians.

Let $J_{0}$ be the standard almost complex structure on $\mathbb{R}^{2n}$. Define
\begin{align*}
\mathscr{J}^{\mathcal{V}}_{\mathfrak{y}} & :=\mathscr{J}^{\infty}\left(\mathbb{R}^{2n},\omega_{0},\mathcal{V}\times \left((-\infty,-\mathfrak{y})\cup(\mathfrak{y},\infty)\right)\right),\\
\mathscr{J}_{\bigstar}& := \bigcup_{\mathfrak{y}>0}\bigcup_{\mathcal{V}\subseteq \mathbb{R}^{2n}} \mathscr{J}_{\eta}^{\mathcal{V}},
\end{align*}
where the union is taken over all $\mathfrak{y}>0$ and all open, pre-compact subsets $\mathcal{V}\subseteq \mathbb{R}^{2n}$. The space $\mathscr{J}_{\bigstar}$ is a contractible subspace of $\mathscr{J}^{\infty}(\mathbb{R}^{2n},\omega_{0})$ with the limit topology. Recall that $\mathscr{H}(\mathbb{R}^{2n})$ denotes the set of strongly tentacular Hamiltonians. 

\begin{lemma}
There exists a dense subset of $\mathscr{H}(\mathbb{R}^{2n})\times \mathscr{J}_{\bigstar}$ on which Rabinowitz Floer homology is well defined.
\label{lem:RFHdense}
\end{lemma}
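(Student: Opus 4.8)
The plan is, given an arbitrary pair $(H,J)\in\mathscr{H}(\mathbb{R}^{2n})\times\mathscr{J}_{\bigstar}$, to produce a pair $(H',J')$ arbitrarily close to it for which all the hypotheses of Theorem \ref{thm:defRFH} hold, so that $RFH(H',J')$ is well defined; this makes the set of such pairs dense. Since $H$ is strongly tentacular it is admissible, so by Lemma \ref{lem:uniform1} it satisfies (H1)--(H3) of \cite{pasquotto2017}; moreover \ref{h3} makes $\Sigma=H^{-1}(0)$ of restricted (hence exact) contact type, and \ref{h4} supplies a coercive $F$ for which the set $K_0$ of Lemma \ref{lem:PO+} is bounded, hence compact, so that \ref{PO+} is uniformly continuous at $H$. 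In particular $H$ itself satisfies \ref{PO+}, hence \ref{PO}. Theorem \ref{thm:MB(K)} then yields an open neighbourhood $B(H)$ of $0$ in $C^{\infty}_{c}(\mathbb{R}^{2n})$ and a comeager subset $A(H)\subseteq B(H)$ such that for every $h\in A(H)$ the functional $\cA^{H+h}$ satisfies \ref{MB} and (since $\Sigma$ is of contact type) $\critval(\cA^{H+h})$ is closed and discrete.

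Next I would shrink the neighbourhood of $0$ on which I work: intersecting $B(H)$ with the set $\widetilde{B}(H)$ from the proof of Lemma \ref{lem:PO} (on which $H+h$ remains admissible and $F$ keeps witnessing \ref{h4}, so that $H+h$ is again strongly tentacular) and with the neighbourhood $\mathscr{O}(H)\subseteq C^{\infty}_{c}(K)$ of Theorem \ref{twr:ModuliCompact} for a suitable large compact $K$, I obtain an open neighbourhood $\mathcal{O}$ of $0$ in $C^{\infty}_{c}(K)$ on which $A(H)$ is still comeager (comeagerness being inherited by open subsets of the Baire space $C^{\infty}_{c}(K)$), hence dense. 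I choose $h_0\in A(H)\cap\mathcal{O}$ as close to $0$ as desired and set $H':=H+h_0$. Then $H'$ is strongly tentacular and satisfies \ref{PO}, \ref{PO+} and \ref{MB}, the level set $\Sigma'=(H')^{-1}(0)$ is of exact contact type, and by Lemma \ref{lem:critVal} the set $\critval(\cA^{H'})$ is closed and discrete.

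It remains to choose $J'$ close to $J$, still in $\mathscr{J}_{\bigstar}$, making the moduli spaces of Floer trajectories of $(H',J')$ into smooth manifolds. Since \ref{PO+} is uniformly continuous at $H'$, the non-degenerate critical points of $\cA^{H'}$ lie in a fixed compact set, and by Lemma \ref{lem:etaInf} their $\eta$-coordinate is bounded away from $0$; hence they are contained in $\mathcal{V}:=\mathcal{V}_0\times\big((-\infty,-\mathfrak{y}_0)\cup(\mathfrak{y}_0,\infty)\big)$ for a precompact $\mathcal{V}_0\subseteq\mathbb{R}^{2n}$ and some $\mathfrak{y}_0>0$, which I may enlarge so that $J\in\mathscr{J}^{\infty}(\mathbb{R}^{2n},\omega_0,\mathcal{V})$ (possible because $J\in\mathscr{J}_{\bigstar}$). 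As $\mathcal{V}_0$ is precompact, $\mathscr{J}(\mathbb{R}^{2n},\omega_0,\mathcal{V})$ is itself a subspace of $\mathscr{J}_{\bigstar}$ containing $J$. Because $h_0\in\mathscr{O}(H)$ and $\partial_s H_s\equiv0$ for the constant homotopy $\{(H',J'')\}$ (so the smallness condition \eqref{eqn:Hs} holds trivially), Theorem \ref{twr:ModuliCompact} provides uniform $L^{\infty}$-bounds on the moduli spaces of Floer trajectories of $(H',J'')$ within each fixed action window, for every $J''\in\mathscr{J}(\mathbb{R}^{2n},\omega_0,\mathcal{V})$. This is precisely the boundedness hypothesis needed for Theorem \ref{twr:trasvers}, whence $\mathscr{J}_{reg}(H',\mathcal{V})$ is residual --- hence, by Baire, dense --- in $\mathscr{J}(\mathbb{R}^{2n},\omega_0,\mathcal{V})$; I pick $J'\in\mathscr{J}_{reg}(H',\mathcal{V})$ close to $J$. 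By Theorem \ref{twr:trasvers}, for this $J'$ every moduli space $\mathscr{M}_{J',H'}(\Lambda^-,\Lambda^+)$ between connected components of $\crit(\cA^{H'})$ --- all of them, being circles or the hypersurface $\Sigma_0$, without boundary --- is a smooth manifold without boundary.

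Finally, $(H',J')$ satisfies the hypotheses of Theorem \ref{thm:defRFH}: $\Sigma'$ is a regular level set of exact contact type, $H'$ satisfies \ref{PO} and \ref{MB}, and the moduli spaces are smooth manifolds without boundary whose images are bounded --- the boundedness being supplied by Theorem \ref{twr:ModuliCompact} within each action window, together with the compactness of the shades $\tilde{K}(b)$ of Lemma \ref{compact_shade} which controls the non-compact component $\Sigma_0$. Hence $RFH(H',J')$ is well defined, and since $(H',J')$ was obtained arbitrarily close to $(H,J)$, the pairs in $\mathscr{H}(\mathbb{R}^{2n})\times\mathscr{J}_{\bigstar}$ with well-defined Rabinowitz Floer homology form a dense set. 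I expect the main obstacle to be the bookkeeping on the almost-complex-structure side: the region $\mathcal{V}$ forcing transversality depends, through the location of the non-degenerate orbits, on the perturbed Hamiltonian $H'$, so one must check at once that $J$ still lies in $\mathscr{J}(\mathbb{R}^{2n},\omega_0,\mathcal{V})$, that this space is a Baire space sitting inside $\mathscr{J}_{\bigstar}$ (so the generic $J'$ can be taken near $J$ and remains admissible for the bounds), and that the $L^{\infty}$-estimates of \cite{pasquotto2017} cover every $J''$ in it; a secondary point is confirming that the compact perturbation $H+h_0$ is again strongly tentacular.
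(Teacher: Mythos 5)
Your proposal is correct and follows essentially the same route as the paper's own proof: establish uniform continuity of \ref{PO+} via Lemma \ref{lem:PO+}, invoke Theorem \ref{thm:MB(K)} for genericity of \ref{MB}, locate the non-degenerate orbits in $\mathcal{V}\times((-\infty,-\mathfrak{y})\cup(\mathfrak{y},\infty))$ via Lemma \ref{lem:etaInf}, feed the constant homotopy into Theorem \ref{twr:ModuliCompact} for the $L^\infty$-bounds, and conclude with Theorems \ref{twr:trasvers} and \ref{thm:defRFH}. Your extra bookkeeping — checking explicitly that $H+h_0$ stays strongly tentacular (via $\widetilde{B}(H)$ from the proof of Lemma \ref{lem:PO}) and attending to the Baire-category argument inside $C^\infty_c(K)$ — spells out steps the paper leaves implicit, but it is the same argument.
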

\begin{proof}
First observe that $\mathscr{H}(\mathbb{R}^{2n})$ is open under compact perturbations. In other words for every $H\in \mathscr{H}(\mathbb{R}^{2n})$ there exists an open subset $B(H)\subseteq C_{c}^{\infty}(\mathbb{R}^{2n})$, such that $H+B(H)\subseteq \mathscr{H}(\mathbb{R}^{2n})$. By Lemma \ref{lem:PO+} we know that \ref{PO+} is uniformly continuous at $H$, hence by Theorem \ref{thm:MB(K)} there exists a generic subset $A(H)\subseteq B(H)$, such that for every $h\in A(H)$ the functional $\cA^{H+h}$ satisfies \ref{MB} and its set of critical values is discrete. Define
$$
\mathscr{H}^{reg}(\mathbb{R}^{2n}):=\bigcup_{H\in \mathscr{H}(\mathbb{R}^{2n})}(H+A(H)),
$$
and fix $H\in \mathscr{H}^{reg}(\mathbb{R}^{2n})$. By uniform continuity of \ref{PO+} at $H$ all the nondegenerate periodic orbits of $X_{H}$ on $\Sigma$ are uniformly bounded. In particular, there exists an open, precompact set $\mathcal{V} \subseteq \mathbb{R}^{2n}$ containing all of them. By Lemma \ref{lem:etaInf} the infimum of the $\eta$ parameter over $\crit(\mathcal{A}^{H})\setminus \left(\Sigma_0\right)$ is bounded away from $0$, that is, for a sufficiently small $\mathfrak{y}>0$ all the non-degenerate periodic orbits are contained in $\mathcal{V} \times \left((-\infty,-\mathfrak{y})\cup( \mathfrak{y}, \infty)\right)$. On the other hand, we can apply Theorem \ref{twr:ModuliCompact} to a ``constant'' homotopy $(H_s,J_s)\equiv (H,J)$ with $J\in \mathscr{J}^{\mathcal{V}}_{\mathfrak{y}}$, and obtain uniform bounds on the moduli spaces $\mathscr{M}_{J,H}(\Lambda^{-},\Lambda^{+})$ for any pair of connected components $\Lambda^{-},\Lambda^{+}\subseteq \crit(\cA^{H})$. In particular, the estimates on the moduli spaces depend continuously on the choice of almost complex structure. We can thus apply Theorem \ref{twr:trasvers} and Theorem \ref{thm:defRFH} to infer that for a generic subset of $\mathscr{J}^{\mathcal{V}}_{\mathfrak{y}}$ the corresponding $RFH(H,J)$ is well defined. Since the result holds for any big enough $\mathcal{V}\subseteq \mathbb{R}^{2n}$ and small enough $\mathfrak{y}>0$, we can conclude that the Rabinowitz Floer homology is well defined for a generic subset of $\mathscr{H}(\mathbb{R}^{2n})\times \mathscr{J}_{\bigstar}$.
\end{proof}

\begin{lemma}
For any $H\in \mathscr{H}^{reg}(\mathbb{R}^{2n})$ the associated Rabinowitz Floer homology is independent of the choice of almost complex structure.
\label{lem:invJ}
\end{lemma}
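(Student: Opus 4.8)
The plan is to realize the isomorphism through a homotopy whose Hamiltonian component is \emph{constant} equal to $H$ and whose almost complex structure interpolates between the two given ones, and then to feed this homotopy (together with its inverse, its self-concatenations and the associated homotopies of homotopies) into the machinery of Lemma \ref{lem:homo}, using Theorem \ref{twr:ModuliCompact} to supply all the required uniform bounds. Concretely: fix $H\in\mathscr{H}^{reg}(\mathbb{R}^{2n})$ and two almost complex structures $J^{0},J^{1}$ lying in the generic subset of Lemma \ref{lem:RFHdense}, so that $RFH(H,J^{0})$ and $RFH(H,J^{1})$ are both defined. By uniform continuity of \ref{PO+} at $H$ (Lemma \ref{lem:PO+}) the non-degenerate periodic orbits of $X_{H}$ on $\Sigma$ are contained in a fixed compact set, and by Lemma \ref{lem:etaInf} the corresponding $|\eta|$-values are bounded away from $0$; hence there are an open precompact $\mathcal{V}\subseteq\mathbb{R}^{2n}$ and a small $\mathfrak{y}>0$ such that all these orbits lie in $\mathcal{V}\times((-\infty,-\mathfrak{y})\cup(\mathfrak{y},\infty))$ and, enlarging $\mathcal{V}$ and shrinking $\mathfrak{y}$ if necessary, such that $J^{0},J^{1}\in\mathscr{J}^{\mathcal{V}}_{\mathfrak{y}}$. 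Since $\mathscr{J}^{\mathcal{V}}_{\mathfrak{y}}$ is contractible, I would choose a smooth path $s\mapsto J_{s}\in\mathscr{J}^{\mathcal{V}}_{\mathfrak{y}}$, constant outside $[0,1]$, from $J^{0}$ to $J^{1}$, set $\Gamma:=\{(H,J_{s})\}_{s\in\mathbb{R}}$, and take $K$ to be a large closed ball with $\mathcal{V}\subseteq K$.

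Next I would verify that $\Gamma$ and its perturbations satisfy the hypotheses of Lemma \ref{lem:homo}. Since $H_{1}-H_{0}=0$, its support lies (trivially) in $K$, and for small $h\in C^{\infty}_{c}([0,1]\times K)$ the homotopy $\Gamma(h)=\{(H+h_{s},J_{s})\}$ has Hamiltonian component in $H+\mathscr{O}(H)$ while $\|\partial_{s}H_{s}\|_{L^{\infty}}=\|\partial_{s}h_{s}\|_{L^{\infty}}$ is as small as we like; since $\|J\|_{L^{\infty}}$ is bounded along the fixed path $J_{s}$, the smallness condition \eqref{eqn:Hs} holds on a suitable neighborhood $\mathscr{O}(\Gamma)$ of $0$. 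Theorem \ref{twr:ModuliCompact} then provides, for each pair of critical values $a,b$ and each compact $N\subseteq(H+h_{1})^{-1}(0)$, the uniform bound on the image of $\mathscr{M}_{\Gamma(h)}(\Lambda_{0},\Lambda_{1})$ required in hypothesis (ii), and the a priori action estimates from \cite{pasquotto2017} underlying that theorem give the finiteness of $A(\Gamma(h),a,b)$ and $B(\Gamma(h),a,b)$, uniformly in $h$, required in (i). Lemma \ref{lem:homo} thus produces, for a generic $\bar h\in\mathscr{O}(\Gamma)$, a homomorphism $\Psi^{\Gamma(\bar h)}\colon RFH(H,J^{1})\to RFH(H,J^{0})$.

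To promote $\Psi^{\Gamma(\bar h)}$ to an isomorphism I would follow the scheme after Lemma \ref{lem:homo}: one checks that the inverse homotopy $\Gamma^{-1}$, the concatenations $\Gamma\#\Gamma^{-1}$ and $\Gamma^{-1}\#\Gamma$, and the $\lambda$- and $R$-dependent families $\overline{\Gamma}^{\lambda}(h),\widetilde{\Gamma}^{\lambda}(h)$ and $\overline{\Gamma}^{R}(h),\widetilde{\Gamma}^{R}(h)$ all meet the hypotheses of Lemma \ref{lem:homo}, the last two \emph{uniformly} in $\lambda\in[0,1]$ and $R\ge 0$. Here the present setting is favorable: in each of these reparametrized (families of) homotopies the Hamiltonian component is $H$ plus a compactly supported perturbation built from the fixed $\bar h$ and a variable small $h^{\lambda}$ or $h^{R}$, so $\partial_{s}H_{s}$ only picks up $s$-derivatives of these perturbations, bounded by $2\|\partial_{t}\bar h\|_{L^{\infty}}$ plus a term controlled by the size of $h^{\lambda}$ (resp.\ $h^{R}$), uniformly in the reparametrization parameter, since the reparametrization factors such as $(1+e^{-R})$ are bounded by $2$. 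Shrinking the perturbation neighborhoods $\mathscr{O}^{\lambda}(\overline{\Gamma},\bar h)$ and $\mathscr{O}^{R}(\overline{\Gamma},\bar h)$ so that \eqref{eqn:Hs} holds throughout, and keeping every almost complex structure inside $\mathscr{J}^{\mathcal{V}}_{\mathfrak{y}}$, Theorem \ref{twr:ModuliCompact} then delivers the required uniform moduli-space bounds for all of them. Then \cite[Thm. 11.3.11, Prop. 11.2.8]{Audin2014} give that $\Psi^{\Gamma(\bar h)\#\Gamma(\bar h)^{-1}}$ and $\Psi^{\Gamma(\bar h)^{-1}\#\Gamma(\bar h)}$ are isomorphisms, \cite[Prop. 11.2.9]{Audin2014} identifies them with $\Psi^{\Gamma(\bar h)^{-1}}\circ\Psi^{\Gamma(\bar h)}$ and $\Psi^{\Gamma(\bar h)}\circ\Psi^{\Gamma(\bar h)^{-1}}$, and since the endpoints of the homotopies of homotopies are constant homotopies at $(H,J^{0})$ and $(H,J^{1})$, which induce the identity, we conclude $\Psi^{\Gamma(\bar h)^{-1}}\circ\Psi^{\Gamma(\bar h)}=\mathrm{id}$ and $\Psi^{\Gamma(\bar h)}\circ\Psi^{\Gamma(\bar h)^{-1}}=\mathrm{id}$. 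Hence $\Psi^{\Gamma(\bar h)}$ is an isomorphism and $RFH(H,J^{0})\cong RFH(H,J^{1})$.

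I expect the main obstacle to be this last uniformity: verifying that the smallness condition \eqref{eqn:Hs} survives along the entire two-parameter families of reparametrized homotopies, uniformly in the reparametrization parameter, which forces an a priori choice of the perturbation neighborhoods $\mathscr{O}^{\lambda}(\overline{\Gamma},\bar h)$ and $\mathscr{O}^{R}(\overline{\Gamma},\bar h)$ before any of the transversality perturbations are fixed. Everything else is a direct transcription of the compact-case argument through Lemma \ref{lem:homo}, once one observes that in the present situation the Hamiltonian is never actually deformed away from $H$.
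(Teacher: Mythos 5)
Your proposal takes essentially the same route as the paper's proof: interpolate only the almost complex structure, use Theorem \ref{twr:ModuliCompact} to supply the uniform bounds needed in Lemma \ref{lem:homo}, and then promote the resulting homomorphism to an isomorphism by checking the inverse, the two concatenations, and the $\lambda$- and $R$-parameter families of homotopies, uniformly in the parameter. The only place where the paper is more explicit than you are is the verification of the Novikov finiteness condition (hypothesis (i) of Lemma \ref{lem:homo}): the paper derives it from the linear action--period estimate of \cite[Lem.\ 2.1]{pasquotto2017} together with the smallness condition \eqref{eqn:Hs} and \cite[Cor.\ 3.8]{CieliebakFrauenfelder2009}, whereas you invoke the a priori estimates of \cite{pasquotto2017} without naming this chain; and the paper exploits the convexity of its explicit perturbation neighborhood $\mathscr{O}(J_1,J_2)$ to build the $\lambda$-parameter neighborhoods, which is the concrete mechanism behind the ``shrink the neighborhoods so that \eqref{eqn:Hs} holds throughout'' step you correctly flag as the main point to check.
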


\begin{proof}
Fix $H\in \mathscr{H}^{reg}(\mathbb{R}^{2n})$ and let ${J}_{0},{J}_{1}\in \mathscr{J}_{\bigstar}$ be two almost complex structures for which $RFH(H,J_{0})$ and $RFH(H,J_{1})$ are well defined. There exists a homotopy $\overline{\Gamma}:=\{(H,J_{s})\}_{s\in\mathbb{R}}$ between $(H,J_{0})$ and $(H,J_{1})$ satisfying \eqref{homotopy1} and with $J_{s} \in \mathscr{J}^{\mathcal{V}}_{\mathfrak{y}}$ for some small enough $\mathfrak{y}>0$ and big enough, open, pre-compact set $\mathcal{V}\subseteq \mathbb{R}^{2n}$. Fix a compact set $K\subseteq \mathbb{R}^{2n},\ K\neq \emptyset$, and let $\mathscr{O}(H)\subseteq C_{c}^{\infty}(K)$ be the open subset associated to $H$ and $K$ by \cite[Lem. 2.1]{pasquotto2017}, and $\tilde{c},\varepsilon_{0}>0$ the constants associated to $H$ and $K$ by \cite[Lem. 3.1]{pasquotto2017}. Define
$$
\mathscr{O}({J}_{1},{J}_{2}):=\left\lbrace\begin{array}{c|c}
 & \|\partial_{s}h\|_{\infty}<\frac{1}{16}\left(\tilde{c}+\frac{1}{\varepsilon_{0}}\|J_{s}\|_{\infty}^{\frac{3}{2}}\right)^{-1}\\
{\smash{\raisebox{.5\normalbaselineskip}{ $h\in C^{\infty}_{0}([0,1]\times K)$}}} & \forall\ s\in [0,1]\quad h_{s} \in \mathscr{O}(H)
 \end{array}\right\rbrace
$$
Then $\mathscr{O}({J}_{1},{J}_{2})$ is a neighborhood of $0$ in $C^{\infty}_{c}([0,1]\times K)$. Moreover, for every $h\in \mathscr{O}({J}_{1},{J}_{2})$, the homotopy 
$$
\Gamma(h):=\{(H+h_{s},J_{s})\}_{s\in\mathbb{R}}
$$
satisfies inequality (\ref{eqn:Hs}). Moreover, the Hamiltonian part of the homotopy lies in $H+\mathscr{O}(H)$. In other words, for every $h\in \mathscr{O}({J}_{1},{J}_{2})$, $\Gamma(h)$ satisfies the assumptions of Theorem \ref{twr:ModuliCompact} and as a result one obtains uniform $L^{\infty}\times\mathbb{R}$ bounds for the corresponding moduli spaces and the energy as shown in \cite[Prop. 3.3]{pasquotto2017}. Moreover, using the linear dependence between the action and the $\eta$ parameter (\cite[Lem. 2.1]{pasquotto2017}), together with inequality (\ref{eqn:Hs}), we can directly apply \cite[Cor. 3.8]{CieliebakFrauenfelder2009} to show that $\Gamma(h)$ satisfies the Novikov conditions. In fact, the estimates do not depend on the chosen perturbation, but can be obtained uniformly over the whole set $\mathscr{O}({J}_{1},{J}_{2})$. Consequently, the homotopy $\{H,J_{s}\}_{s\in\mathbb{R}}$ and the set $\mathscr{O}({J}_{1},{J}_{2})$ satisfy the assumptions of Lemma \ref{lem:homo} and we can deduce that for a generic choice of $h\in \mathscr{O}({J}_{1},{J}_{2})$ there exists a homomorphism
$$
\Psi^{\Gamma(h)}:RFH(H,{J}_{2})\to RFH(H,{J}_{1}).
$$
Note that for every $h\in \mathscr{O}({J}_{1},{J}_{2})$, the homotopies $\Gamma(h)^{-1}$, $\Gamma(h)\#\Gamma(h)^{-1}$ and $\Gamma(h)^{-1}\#\Gamma(h)$ (cf. \eqref{eqn:GG-1} and \eqref{eqn:G-1G}) also satisfy inequality (\ref{eqn:Hs}), and their Hamiltonian components lie in $H+\mathscr{O}(H)$. By the same argument as the one above, we can conclude that for a generic choice of $h\in \mathscr{O}({J}_{1},{J}_{2})$ the four homomorphisms $\Psi^{\Gamma(h)}, \Psi^{\Gamma(h)^{-1}}$, $\Psi^{\Gamma(h)\#\Gamma(h)^{-1}}$ and $\Psi^{\Gamma(h)^{-1}\#\Gamma(h)}$ are well defined.
The set $\mathscr{O}({J}_{1},{J}_{2})$ is convex and open, hence for every fixed, generic $\bar{h}\in \mathscr{O}({J}_{1},{J}_{2})$ the set
$$
\mathscr{O}^{\lambda}(\overline{\Gamma},\bar{h}):=\left\lbrace h\in C^{\infty}_{c}([0,1]^{2}\times K)\ \Big| \ 2\lambda\bar{h}+ h^{\lambda}\in 2\mathscr{O}({J}_{1},{J}_{2})\right\rbrace
$$
is an open subset of $C^{\infty}_{c}([0,1]^{2}\times K)$. Moreover, for every $h\in \mathscr{O}^{\lambda}(\overline{\Gamma},\bar{h})$ the corresponding homotopies $\overline{\Gamma}^{\lambda}(h)$ and $\widetilde{\Gamma}^{\lambda}(h)$ as in \eqref{eqn:Gl(h)1} and \eqref{eqn:Gl(h)2} also satisfy inequality (\ref{eqn:Hs}) and their Hamiltonian parts lie in $H+\mathscr{O}(H)$. Therefore, the corresponding moduli spaces are uniformly bounded by Theorem \ref{twr:ModuliCompact} and the homotopies satisfy assumptions of Lemma \ref{lem:homo} uniformly. This proves that $\Psi^{\Gamma(\bar{h})\#\Gamma(\bar{h})^{-1}}$ and $\Psi^{\Gamma(\bar{h})^{-1}\#\Gamma(\bar{h})}$ are isomorphisms on the homology level. Using similar arguments one can prove uniform bounds for the $R$-parametric families of homotopies, which ensures that the homomorphism associated to the concatenations $\Gamma(\bar{h})\#\Gamma(\bar{h})^{-1}$ and $\Gamma(\bar{h})^{-1}\#\Gamma(\bar{h})$ are just the composition of the homomorphisms $\Psi^{\Gamma(\bar{h})}$ and $\Psi^{\Gamma(\bar{h})^{-1}}$. This proves that $\Psi^{\Gamma(\bar{h})}$ is an isomorphism and hence concludes the proof.
\end{proof}
\begin{remark}
In view of Lemma \ref{lem:invJ}, from now on we will omit the almost complex structure from the notation and write $RFH(H)$ for the Rabinowitz Floer homology of $H\in\scrH^{reg}(\R^{2n})$.
\end{remark}
\begin{lemma}
For any $H\in \mathscr{H}(\mathbb{R}^{2n})$ there exists an open neighborhood $\widetilde{\mathscr{O}}(H)\subseteq C_{c}^{\infty}(\mathbb{R}^{2n})$, such that for every pair $h_{0}, h_{1} \in \widetilde{\mathscr{O}}(H)$ with $H+ h_{0}, H+h_{1} \in \mathscr{H}^{reg}(\mathbb{R}^{2n})$
$$
RFH(H+ h_{0}) \cong RFH( H+h_{1}).
$$
\label{lem:invH}
\end{lemma}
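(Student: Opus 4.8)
The plan is to run the argument of Lemma~\ref{lem:invJ} with the roles of the two variables exchanged: keep the almost complex structure inside a fixed class $\mathscr{J}^{\mathcal{V}}_{\mathfrak{y}}$ and connect $H+h_0$ to $H+h_1$ by a compactly supported homotopy of Hamiltonians, invoking the uniform bounds of \cite{pasquotto2017} (Theorem~\ref{twr:ModuliCompact}) for every homotopy that occurs. Since $\mathscr{H}(\R^{2n})$ is open under compact perturbations, I would first fix an exhaustion $\{K_n\}$ of $\R^{2n}$ by compact sets and, for each $n$, let $\mathscr{O}_n(H)\subseteq C^{\infty}_{c}(K_n)$ and $\tilde c_n,\varepsilon_{0,n}>0$ be the neighborhood of $0$ and the constants attached to $H$ and $K_n$ by \cite[Lem.~2.1]{pasquotto2017} and \cite[Lem.~3.1]{pasquotto2017}; shrinking, I may assume each $\mathscr{O}_n(H)$ is convex and $H+\mathscr{O}_n(H)\subseteq\mathscr{H}(\R^{2n})$. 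I then take $\widetilde{\mathscr{O}}(H)$ to be the open neighborhood of $0$ in $C^{\infty}_{c}(\R^{2n})$ whose intersection with each $C^{\infty}_{c}(K_n)$ is a convex neighborhood of $0$, contained in $\mathscr{O}_n(H)$ and so small (in a smallness bound of the same kind as the one defining $\mathscr{O}(J_1,J_2)$ in the proof of Lemma~\ref{lem:invJ}) that every homotopy whose Hamiltonian part stays in $H+\mathscr{O}_n(H)$ and whose $s$-derivative is controlled by $\|h_0\|_\infty+\|h_1\|_\infty$ satisfies inequality~\eqref{eqn:Hs}.

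Now take $h_0,h_1\in\widetilde{\mathscr{O}}(H)$ with $H+h_0,H+h_1\in\mathscr{H}^{reg}(\R^{2n})$, pick $n$ with $\operatorname{supp}h_0\cup\operatorname{supp}h_1\subseteq K_n$, and write $K:=K_n$, $\mathscr{O}(H):=\mathscr{O}_n(H)$. As $H+h_0$ and $H+h_1$ are strongly tentacular, Lemma~\ref{lem:PO+} gives uniform continuity of \ref{PO+} at each, and Lemma~\ref{lem:etaInf} then bounds $\eta$ away from $0$ on the nondegenerate part of their critical sets; so there are an open precompact $\mathcal{V}\subseteq\R^{2n}$ and $\mathfrak{y}>0$ with all nondegenerate periodic orbits of $H+h_0$ and of $H+h_1$ inside $\mathcal{V}\times\big((-\infty,-\mathfrak{y})\cup(\mathfrak{y},\infty)\big)$, and regular $J_0,J_1\in\mathscr{J}^{\mathcal{V}}_{\mathfrak{y}}$ computing $RFH(H+h_0)$ and $RFH(H+h_1)$ by Lemmas~\ref{lem:RFHdense} and~\ref{lem:invJ}. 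I take $\overline{\Gamma}:=\{(H_s,J_s)\}_{s\in\R}$ with $H_s:=H+(1-\beta(s))h_0+\beta(s)h_1$ ($\beta$ a fixed cutoff from $0$ to $1$ on $[0,1]$) and $J_s$ a fixed path in $\mathscr{J}^{\mathcal{V}}_{\mathfrak{y}}$ from $J_0$ to $J_1$; by convexity of $\mathscr{O}(H)$, $H_s\in H+\mathscr{O}(H)$ for all $s$. For $h$ in a suitable convex neighborhood $\mathscr{O}(\overline{\Gamma})\subseteq C^{\infty}_{c}([0,1]\times K)$ of $0$, the homotopies $\Gamma(h):=\{(H_s+h_s,J_s)\}$, $\Gamma(h)^{-1}$, $\Gamma(h)\#\Gamma(h)^{-1}$ and $\Gamma(h)^{-1}\#\Gamma(h)$ (see \eqref{eqn:GG-1}--\eqref{eqn:G-1G}) keep their Hamiltonian parts in $H+\mathscr{O}(H)$ and satisfy~\eqref{eqn:Hs}; hence Theorem~\ref{twr:ModuliCompact} gives uniform $L^\infty\times\R$ bounds on the associated moduli spaces, and \cite[Lem.~2.1]{pasquotto2017} together with \cite[Cor.~3.8]{CieliebakFrauenfelder2009} yields the Novikov finiteness, i.e.\ uniform finiteness of $A(\Gamma(h),a,b)$ and $B(\Gamma(h),a,b)$ from Definition~\ref{def:Nov}. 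Thus hypotheses (i)--(ii) of Lemma~\ref{lem:homo} hold uniformly over $\mathscr{O}(\overline{\Gamma})$, and for generic $\bar{h}$ Lemma~\ref{lem:homo} produces $\Psi^{\Gamma(\bar{h})}\colon RFH(H+h_1)\to RFH(H+h_0)$ together with $\Psi^{\Gamma(\bar{h})^{-1}}$, $\Psi^{\Gamma(\bar{h})\#\Gamma(\bar{h})^{-1}}$ and $\Psi^{\Gamma(\bar{h})^{-1}\#\Gamma(\bar{h})}$.

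To see that $\Psi^{\Gamma(\bar{h})}$ is an isomorphism, I would proceed as at the end of Section~\ref{sec:InvRFH} and in Lemma~\ref{lem:invJ}: bring in the $\lambda$- and $R$-parametric families of homotopies of homotopies \eqref{eqn:Gl(h)1}--\eqref{eqn:Gl(h)2} (and their $R$-versions) on convex open sets $\mathscr{O}^{\lambda}(\overline{\Gamma},\bar{h})$, $\mathscr{O}^{R}(\overline{\Gamma},\bar{h})$, chosen so small that all Hamiltonian parts stay in $H+\mathscr{O}(H)$ and~\eqref{eqn:Hs} persists; then Theorem~\ref{twr:ModuliCompact} again gives uniform bounds and Lemma~\ref{lem:homo} applies uniformly in $\lambda$, resp.\ $R$, so by \cite[Thm.~11.3.11, Prop.~11.2.8]{Audin2014} the maps $\Psi^{\Gamma(\bar{h})\#\Gamma(\bar{h})^{-1}}$ and $\Psi^{\Gamma(\bar{h})^{-1}\#\Gamma(\bar{h})}$ are isomorphisms, and by \cite[Prop.~11.2.9]{Audin2014} they equal $\Psi^{\Gamma(\bar{h})^{-1}}\circ\Psi^{\Gamma(\bar{h})}$ and $\Psi^{\Gamma(\bar{h})}\circ\Psi^{\Gamma(\bar{h})^{-1}}$; since constant homotopies induce the identity, $\Psi^{\Gamma(\bar{h})}$ is an isomorphism and $RFH(H+h_0)\cong RFH(H+h_1)$. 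The main obstacle is precisely this bookkeeping: one has to pick $\widetilde{\mathscr{O}}(H)$, $\mathscr{O}(\overline{\Gamma})$, $\mathscr{O}^{\lambda}(\overline{\Gamma},\bar{h})$ and $\mathscr{O}^{R}(\overline{\Gamma},\bar{h})$ small and convex enough that \emph{every} homotopy built from the linear interpolation, its inverse, the two concatenations and the two-parameter deformations of the concatenations stays in the region $H+\mathscr{O}(H)$ where the estimates of \cite{pasquotto2017} are valid and keeps satisfying~\eqref{eqn:Hs}; this is the only place the smallness of $h_0,h_1$ is used, and it is what keeps Theorem~\ref{twr:ModuliCompact}, the Novikov estimate, and hence Lemma~\ref{lem:homo} available at every step.
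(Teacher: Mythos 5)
Your proposal is correct and follows essentially the same route as the paper: take $\mathscr{O}(H)$, $\tilde{c}$, $\varepsilon_0$ from \cite[Lem.~2.1, Lem.~3.1]{pasquotto2017}, define $\widetilde{\mathscr{O}}(H)$ by a smallness condition ensuring \eqref{eqn:Hs} for the linear interpolation homotopy, use convexity of $\mathscr{O}(H)$ to keep all Hamiltonian parts inside $H+\mathscr{O}(H)$, apply Theorem~\ref{twr:ModuliCompact} for the uniform bounds, invoke Lemma~\ref{lem:homo} for the continuation maps, and close the argument with the $\lambda$- and $R$-parametric families and the Audin--Damian results. If anything, your construction of $\widetilde{\mathscr{O}}(H)$ over the whole exhaustion $\{K_n\}$ (so that it is literally open in the inductive limit topology on $C_c^\infty(\R^{2n})$) is a notch more careful than the paper, which fixes a single $K=K_m$; otherwise the two arguments coincide.
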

\begin{proof}
Let $B(H)\subseteq C^{\infty}_{c}(\R^{2n})$ and $\{K_{m}\}_{m\in\N}$ be the open subspace and the sequence of compact subsets of $\R^{2n}$, respectively, whose existence is guaranteed by the uniform continuity of \ref{PO+} at $H$. Fix one of the compact subsets $K:=K_{m}$. Let $\mathscr{O}(H)\subseteq C_{c}^{\infty}(K)$ be the convex, open subset from \cite[Lem. 2.1]{pasquotto2017}, and let $\tilde{c},\varepsilon_{0}>0$ be the constants associated to $H$ and $K$ by \cite[Lem. 3.1]{pasquotto2017}. Define
$$
\widetilde{\mathscr{O}}(H):= \left\lbrace \begin{array}{c | c}
h \in \mathscr{O}(H)\cap B(H)
& \|h\|_{\infty}<2^{-7}\left(\tilde{c}+\frac{\sqrt{8}}{\varepsilon_{0}}\right)^{-1}
\end{array}\right\rbrace.
$$
Fix $h_{0}, h_{1} \in \widetilde{\mathscr{O}}(H)$. By uniform continuity of \ref{PO+} at $H$ and Lemma \ref{lem:etaInf} there exists an open, precompact subset $\mathcal{V} \subseteq \R^{2n}$ and a constant $\mathfrak{y}>0$, such that
all nondegenerate critical points of $\cA^{H+ h_{0}}$ and  $\cA^{H+h_{1}}$ lie in $\mathcal{V} \times \left((-\infty,-\mathfrak{y})\cup( \mathfrak{y}, \infty)\right)$. By Theorem \ref{twr:trasvers} the sets of regular almost complex structures corresponding to $H+h_{0}$ and $H+h_{1}$, respectively, are dense in $\mathscr{J}^{\mathcal{V}}_{\mathfrak{y}}$. Moreover, this set is contractible. Therefore there exist two almost complex structures ${J}_{0}, {J}_{1}\in \mathscr{J}^{\mathcal{V}}_{\mathfrak{y}}$, regular in the sense of Theorem \ref{twr:trasvers} for $H+ h_{0}$ and $H+ h_{1}$ respectively, and a homotopy $\{J_{s}\}_{s\in\R}$ connecting them, with $J_{s} \in \mathscr{J}^{\mathcal{V}}_{\mathfrak{y}}$ for all $s\in\mathbb{R}$, $\|J_{s}\|_{\infty} < 2$,
$$
J_{s} ={J}_{1}\ \textrm{for}\ s\leq 0\quad \textrm{and}\quad J_{s} ={J}_{2}\ \textrm{for}\ s\geq 1.
$$
If we choose a smooth function $\beta \in C^{\infty}(\R)$ satisfying  $\|\beta '\|_{\infty} < 2$, and 
$$
\beta(s)=0,\ \textrm{for}\ s\leq 0\quad \textrm{and}\quad \beta(s)=1\ \textrm{for}\ s\geq 1,
$$
the homotopy
$$
\overline{\Gamma}:=\left\lbrace (H+h_{0}(1-\beta(s))+h_{1}\beta(s)),J_{s}\right\rbrace_{s\in \R}
$$
connects $(H+h_{0},{J}_{0})$ with $(H+h_{1},{J}_{1})$. Moreover, $\Gamma$ satisfies \eqref{eqn:Hs} and due to convexity of $\mathscr{O}(H)$ the Hamiltonian part of $\overline{\Gamma}$ lies in $H+\mathscr{O}(H)$. This puts us in the setting of Theorem \ref{twr:ModuliCompact}, which gives us uniform $L^{\infty}\times\mathbb{R}$ bounds on the corresponding moduli spaces and uniform bounds on the energy as shown in \cite[Prop. 3.3]{pasquotto2017}. Using arguments similar to those in the proof of Lemma \ref{lem:invJ}, we can show that the uniform bounds hold also for perturbations of $\overline{\Gamma}$, their inverses and concatenations and homotopies of homotopies. Having established uniform bounds on the families of homotopies, one can use standard, Floer-theoretic techniques (as presented in section \ref{sec:InvRFH}) to conclude that $RFH(H+ h_{0})$ and $RFH( H+h_{1})$ are indeed isomorphic.
\end{proof}

We conclude this section with the proof of the main theorem:\\

\noindent\textit{Proof of Theorem \ref{thm:tentRFH}}: By Lemma \ref{lem:RFHdense} and Lemma \ref{lem:invJ} there exists a dense subset of the strongly tentacular Hamiltonians for which the Rabinowitz Floer homology is well defined. We denote it by $\mathscr{H}^{reg}(\R^{2n})$. Let $H$ be a strongly tentacular Hamiltonian and let $\widetilde{O}(H)\subseteq C_{c}^{\infty}(\R^{2n})$ be the open neighborhood of $0$ defined in Lemma \ref{lem:invH}. For $h\in \widetilde{O}(H)$ and $H+h\in \mathscr{H}^{reg}(\R^{2n})$ we define
$$
RFH(H):=RFH(H+h). 
$$
This definition is independent on the choice of the perturbation $h\in \widetilde{O}(H)$, since by Lemma \ref{lem:invH} for any pair $h_{1},h_{2}\in \widetilde{O}(H)$, such that $H+h_{1},H+h_{2}\in \mathscr{H}^{reg}(\R^{2n})$ the corresponding Rabinowitz Floer homologies are isomorphic.

\hfill $\square$
\begin{remark}
The proof of the above theorem also implies that for any one-parameter family of tentacular Hamiltonians $\{H_s\}$ in the affine space of compactly supported perturbations of a given Hamiltonian $H$, the Rabinowitz Floer homology is constant along $\{H_s\}$.
\end{remark}

\subsection{Invariance under symplectomorphisms}
\label{sympmor}

The goal of this section is to prove Corollary \ref{cor:SympTentRFH} and to determine which subgroup of the group of symplectomorphisms leaves the set of strongly tentacular Hamiltonians invariant.
\SympTentRFH*
\begin{proof}
Let $\varphi$ be a diffeomorphism of $\R^{2n}$. In particular, $\varphi$ takes compact sets to compact sets. We define a map
$$
\tilde{\varphi}: C^{\infty}_{c}(\R^{2n}) \to C_c^{\infty}(\R^{2n}),\qquad
f \mapsto f \circ \varphi^{-1}.
$$
The map $\tilde{\varphi}$ is a diffeomorphism of $ C_c^{\infty}(\R^{2n})$, so it maps open sets to open sets and dense sets to dense sets.

Let $H$ be a strongly tentacular Hamiltonian and let $\varphi$ be a symplectomorphism of $\R^{2n}$. Let $\widetilde{\scrO}(H)\subseteq C_{c}^{\infty}(\R^{2n})$ be the open neighborhood of $0$ associated to $H$ by Lemma \ref{lem:invH}. Then $\widetilde{\scrO}(H \circ \varphi^{-1}):=\tilde{\varphi}\left( \widetilde{\scrO}(H) \right) $ is an open subset of $0$ in $C_c^{\infty}(\R^{2n})$. On the other hand, by Lemma \ref{lem:RFHdense} and Lemma \ref{lem:invJ}, there exists a dense subset of the set of strongly tentacular Hamiltonians, denoted by $\mathscr{H}^{reg}(\R^{2n})$, for which the Rabinowitz Floer homology is well defined. Define
$$
\widetilde{\scrO}^{reg}(H \circ \varphi^{-1}):=\left\lbrace h \circ \varphi^{-1} \, \Big|\, h \in \widetilde{\scrO}(H), \, H+h \in \mathscr{H}^{reg}(\R^{2n}) \right\rbrace.
$$
In view of Lemma \ref{lem:RFHdense} and Lemma \ref{lem:invJ} the set $\tilde{\varphi}^{-1}\left(\widetilde{\scrO}^{reg}(H \circ \varphi^{-1})\right)$ is dense in $\widetilde{\scrO}(H)$. Consequently, the set $\widetilde{\scrO}^{reg}(H \circ \varphi^{-1})$ is dense in $\widetilde{\scrO}(H \circ \varphi^{-1})$. By Proposition \ref{prop:sympRFHinv}, for every $h \in \widetilde{\scrO}^{reg}(H \circ \varphi^{-1})$ the corresponding Rabinowitz Floer homology $RFH_*(H \circ \varphi^{-1}+h)$ is well defined. What is left to show is that for all $h\in \widetilde{\scrO}^{reg}(H \circ \varphi^{-1})$ the corresponding $RFH(H \circ \varphi^{-1} + h)$ is independent of the choice of $h$. Let us take $h_1,h_2 \in \widetilde{\scrO}^{reg}(H \circ \varphi^{-1})$. Then we have the following sequence of isomorphisms:
$$
RFH(H\circ \varphi^{-1}+h_1) \cong RFH(H+h_1\circ \varphi) \cong RFH(H+ h_2\circ \varphi)\cong RFH(H\circ \varphi^{-1}+ h_2),
$$
where the first and last isomorphisms follow from Proposition \ref{prop:sympRFHinv} and the middle isomorphism is a corollary of Lemma \ref{lem:invH}. As a result, we can define
$$
RFH(H\circ \varphi^{-1}):=RFH(H\circ \varphi^{-1}+h),
$$
where $h\in  \widetilde{\scrO}^{reg}(H \circ \varphi^{-1})$.
\end{proof}

Even though the Rabinowitz Floer homology of the image of a strongly tentacular Hamiltonian under a symplectomorphism is well-defined, this image will not, in general, be a strongly tentacular Hamiltonian. The next lemma describes which elements of $\operatorname{Symp}(\R^{2n})$ preserve the set of tentacular Hamiltonians.
\begin{lemma}
The set of (strongly) tentacular Hamiltonians is invariant under the action of the group
$$
\operatorname{Symp}^{ten}(\R^{2n}):=\left\lbrace \begin{array}{c|c c }
& \sup\|D^{k}\varphi(x)\|\cdot |x|^{k-1} <+ \infty, & \\
{\smash{\raisebox{.5\normalbaselineskip}{$\varphi \in \operatorname{Symp}(\R^{2n})$}}} & \sup\|D^{k}\varphi^{-1}(x)\|\cdot |x|^{k-1}<+\infty, & {\smash{\raisebox{.5\normalbaselineskip}{ $k=1,2, 3$}}}
\end{array}\right\rbrace.
$$
\label{lem:SympTen}
\end{lemma}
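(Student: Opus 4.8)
The plan is to reduce the statement to a direct verification of the axioms in Definition~\ref{tentH}. Given $\varphi\in\operatorname{Symp}^{ten}(\R^{2n})$ and a (strongly) tentacular Hamiltonian $H$ with regular level set $\Sigma$, I would show that $G:=H\circ\varphi$ again satisfies (h1)--(h4) (with $F$ coercive in the strongly tentacular case, and without that requirement in the merely tentacular case); since the defining inequalities of $\operatorname{Symp}^{ten}$ are symmetric in $\varphi$ and $\varphi^{-1}$, this gives invariance under the whole group (closure under composition being a routine chain-rule check, and $\varphi^{-1}\in\operatorname{Symp}^{ten}$ being immediate). First I would record the elementary consequences of the defining bounds: $\|D\varphi\|_\infty,\|D\varphi^{-1}\|_\infty<\infty$, $\|D^k\varphi(x)\|=O(|x|^{1-k})$ for $k=2,3$, and --- applying the mean value inequality to $\varphi^{-1}$ --- $|x|\le a_0+a_1|\varphi(x)|$, so that $|\varphi(x)|$ and $|x|$ are comparable up to additive constants. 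In particular $|\varphi(x)|\to\infty\iff|x|\to\infty$, $\varphi$ is proper, and quadratic lower bounds transfer: $c|\varphi(x)|^2-c'\ge\tilde c|x|^2-\tilde c'$.

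For (h1) and (h3) I would pull back the Liouville vector fields. If $X\in\scrL(\R^{2n})$ is Liouville for $\omega_0$, then, since $\varphi^*\omega_0=\omega_0$, the pullback $Y:=\varphi^*X$ defined by $Y(x)=(D\varphi(x))^{-1}X(\varphi(x))$ satisfies $\iota_Y\omega_0=\varphi^*(\iota_X\omega_0)$, hence $d\iota_Y\omega_0=\varphi^*\omega_0=\omega_0$, so $Y$ is again Liouville. Asymptotic regularity of $Y$ follows from the product rule $DY=D\big((D\varphi)^{-1}\big)\cdot(X\circ\varphi)+(D\varphi)^{-1}\cdot(DX\circ\varphi)\cdot D\varphi$: the first summand is bounded because $\|D((D\varphi)^{-1})\|\lesssim\|(D\varphi)^{-1}\|^2\,\|D^2\varphi\|=O(|x|^{-1})$ while $|X\circ\varphi|=O(|\varphi(x)|)=O(|x|)$ (asymptotic regularity of $X$ forces at most linear growth), and the second summand is bounded since $\|(D\varphi)^{-1}\|,\|DX\|,\|D\varphi\|$ are all bounded (on any compact ball everything is controlled by continuity, so only the large-$|x|$ regime matters). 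Applying this to $X^\dagger$ and $X^\ddagger$ gives $Y^\dagger:=\varphi^*X^\dagger,\ Y^\ddagger:=\varphi^*X^\ddagger\in\scrL(\R^{2n})$, and the identity $dG(Y)(x)=d(H\circ\varphi)(x)\big[(D\varphi(x))^{-1}X(\varphi(x))\big]=dH(X)(\varphi(x))$ then yields $dG(Y^\dagger)(x)\ge c|\varphi(x)|^2-c'\ge\tilde c|x|^2-\tilde c'$ for all $x$ (which is (h1)), and $dG(Y^\ddagger)(x)=dH(X^\ddagger)(\varphi(x))>0$ for $x\in\Sigma_G=\varphi^{-1}(\Sigma)$ (which is (h3)).

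The heart of the argument --- and the step I expect to be the main obstacle --- is (h2). By the chain rule,
\[
D^3(H\circ\varphi)=(D^3H\circ\varphi)\,(D\varphi)^{\otimes 3}+3\,(D^2H\circ\varphi)\,(D^2\varphi)\,(D\varphi)+(DH\circ\varphi)\,(D^3\varphi),
\]
so one must bound $\|D^3G(x)\|\cdot|x|$ by controlling each summand separately: the first term is $O(|x|^{-1})\cdot O(1)\cdot|x|=O(1)$ using that $\|D^3H(\cdot)\|\,|\cdot|$ is bounded (axiom (h2) for $H$) together with $|\varphi(x)|\gtrsim|x|$; the third term is $O(|x|)\cdot O(|x|^{-2})\cdot|x|=O(1)$; and the middle term is $O(1)\cdot O(|x|^{-1})\cdot O(1)\cdot|x|=O(1)$, where one invokes the growth estimates for admissible Hamiltonians obtained in the proof of Lemma~\ref{lem:uniform1} --- namely that sub-quadratic growth forces at most linear growth of $DH$ and a bound on $D^2H$ --- together with the decay rates $\|D^2\varphi(x)\|=O(|x|^{-1})$ and $\|D^3\varphi(x)\|=O(|x|^{-2})$ that the definition of $\operatorname{Symp}^{ten}(\R^{2n})$ encodes. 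The delicate point is keeping careful track of which growth rate multiplies which decay rate so that every product is $O(|x|^{-1})$ before the final multiplication by $|x|$; this is precisely where the exponents $|x|^{k-1}$ in the definition of $\operatorname{Symp}^{ten}$ are needed, and it is the only place the $k=2,3$ bounds on $\varphi$ enter.

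Finally, for (h4) I would take the function $F$ from condition (h4) for $H$, defined on a neighborhood $\mathcal N(\Sigma)$ of $\Sigma$, and set $F_G:=F\circ\varphi$, which is defined on the neighborhood $\varphi^{-1}(\mathcal N(\Sigma))$ of $\Sigma_G$ and is coercive because $\varphi$ is a proper diffeomorphism (for the merely tentacular case one simply drops this coercivity remark). Since a symplectomorphism preserves Poisson brackets, $\{G,F_G\}=\{H,F\}\circ\varphi$ and $\{G,\{G,F_G\}\}=\{H,\{H,F\}\}\circ\varphi$, and because $|\varphi(x)|\to\infty$ as $|x|\to\infty$ the alternative ``$\{H,F\}\neq0$ or $\{H,\{H,F\}\}>0$'' holding on $\Sigma$ for large $|\varphi(x)|$ translates verbatim into the same alternative holding on $\Sigma_G$ for large $|x|$. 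This establishes (h4) for $G$, completing the verification that $H\circ\varphi$ is (strongly) tentacular; invariance under the group $\operatorname{Symp}^{ten}(\R^{2n})$ then follows since $\varphi^{-1}$ lies in the same group.
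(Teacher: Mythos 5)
Your proposal is correct and follows essentially the same route as the paper's proof: pulling back the Liouville fields $X^\dagger,X^\ddagger$ and checking asymptotic regularity via the product rule to get (h1) and (h3), using preservation of Poisson brackets together with properness (and, in the strongly tentacular case, coercivity of $F\circ\varphi$) to get (h4), and estimating the three terms of the Faà di Bruno expansion of $D^3(H\circ\varphi)$ against the $|x|^{k-1}$ decay built into $\operatorname{Symp}^{ten}$ to get (h2). The only cosmetic difference is that you make the bookkeeping for the cubic chain rule and the derivative of $(D\varphi)^{-1}$ more explicit, and you spell out the group-closure remark, which the paper leaves implicit.
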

\begin{proof}
We will show that for every (strongly) tentacular Hamiltonian $H$ and any $\varphi\in \operatorname{Symp}^{ten}(\R^{2n})$, the composition $H\circ \varphi$ is also a (strongly) tentacular Hamiltonian. Since $\varphi$ is a symplectomorphism it preserves the symplectic properties. On the other hand, the bounds on the derivatives of $\varphi$ and $\varphi^{-1}$ assure that the analytic properties of the Hamiltonian are preserved.

Observe, that the bounds on the derivatives of $\varphi$ and $\varphi^{-1}$ imply
\begin{align}
|\varphi(x)|& \geq \frac{1}{\sup\|D\varphi^{-1}\| } \left( |x| - |\varphi^{-1}(0)|\right), \label{phi(x)geq(x)}\\
|\varphi^{-1} (x)| & \leq  |x|\cdot\sup\|D\varphi^{-1}\| + |\varphi^{-1}(0)|.
\end{align}

Since $\varphi$ is a symplectomorphism, in particular it preserves the class of Liouville vector fields.
If $X \in \mathscr{L}(\R^{2n})$ and $\varphi\in \operatorname{Symp}^{ten}(\R^{2n})$, in fact, there exist constants $a,a'>0$, such that the following estimates hold:
\begin{align*}
\| D(D\varphi^{-1} X)(x)\| & \leq \|D^{2}\varphi^{-1}(x)\|\cdot \|X(\varphi^{-1}(x))\|+\|DX(\varphi^{-1}(x))\|\cdot \|D\varphi^{-1}(x)\|^{2}\\
&  \leq  a\|D^{2}\varphi^{-1}(x)\| (|\varphi^{-1}(x)|+1)+a'\\
& \leq  a \|D^{2}\varphi^{-1}(x)\| (|x|\cdot\sup\|D\varphi^{-1}\| + |\varphi^{-1}(0)| +1) + a'<+\infty,
\end{align*}
that is, $D\varphi^{-1} X \in \mathscr{L}(\R^{2n})$.
Since $d(H \circ \varphi)(D\varphi^{-1} X^{\ddagger})(x)=dH(X^{\ddagger})(\varphi(x))>0$ for all $x\in (H\circ \varphi)^{-1}(0)=\varphi^{-1}(H^{-1}(0))$, it follows that $H\circ \varphi$ satisfies condition \ref{h3}.

Similarly, by condition \ref{h1} for $\varphi$, there exist constants $c,c'>0$, such that
$$
d(H \circ \varphi)(D\varphi^{-1} X^{\dagger})(x)  =dH(X^{\dagger})(\varphi(x))\geq c|\varphi(x)|^2 -c',
$$
which combined with \eqref{phi(x)geq(x)} ensures that $H\circ \varphi$ satisfies \ref{h1}.

Observe that whenever $\varphi$ is a symplectomorphism, then $\{H \circ \varphi, F \circ \varphi\}(x)=\{H,F\}(\varphi(x))$. Pre-images of compact sets under diffeomorphisms are compact, in particular, the set
\begin{gather*}
\varphi^{-1}\left(\left\lbrace x\in H^{-1}(0) \ \Big|\  \{H,F\}(x)=0\ \textrm{or}\ \{H,\{H,F\}\}(x)\leq 0 \right\rbrace\right)\\
=\left\lbrace x\in (H\circ\varphi)^{-1}(0) \ \Big|\  \{H,F\}(\varphi( x))=0\ \textrm{or}\ \{H,\{H,F\}\}(\varphi (x))\leq 0 \right\rbrace
\end{gather*}
is compact. Moreover, if $F$ is a coercive function, then $F \circ \varphi$ is also a coercive function. Hence $H\circ \varphi$ satisfies \ref{h4} with $F \circ \varphi$.

 What is left to prove is that $H\circ \varphi$ satisfies \ref{h2}. Since $\varphi\in \operatorname{Symp}^{ten}(\R^{2n})$ and $H$ satisfies \ref{h2}, there exist constants $b, b'>0$, such that for $x\in\R^{2n}$, with $|\varphi(x)|>1$ and $|x|>1$ we have the following estimate:

\begin{align*}
\|D^{3}(H\circ \varphi)(x)\|\cdot |x| & =\left\|D^{3}H(\varphi(x))(D\varphi(x))^{3}+3 D^{2}H(\varphi(x)) D^{2}\varphi(x)D\varphi(x)+DH(\varphi(x))D^{3}\varphi(x)\right\|\cdot |x|\\
& \leq  b \left(\|D^{3}H(\varphi(x))\| \cdot |\varphi(x)|+\|D^{2}\varphi (x)\|\cdot |x|+(|\varphi(x)|+1)\|D^{3}\varphi(x)\|\cdot |x|\right)\\
& \leq b' \left(1+\left( |x| +1\right)\|D^{3}\varphi(x)\|\cdot |x|\right)\\
& \leq b' \left(1+2\|D^{3}\varphi(x)\|\cdot |x|^2\right)  <+\infty.
\end{align*}

The function $\|D^{3}(H\circ \varphi)(x)\|\cdot |x|$ obtains its maximum on the compact set 
$$
\{x\in\R^{2n}\ |\ |\varphi(x)|\leq 1\ \textrm{or} \ |x|\leq 1\},
$$
hence we can conclude that $H\circ \varphi$ satisfies \ref{h2}.
\end{proof}
\section{Symplectic hyperboloids}
\label{sec:SympHyper}
In this last section, we use H{\"o}rmander's symplectic classification of quadratic forms \cite{Hormander1995} to introduce the notion of \emph{symplectic hyperboloids} and to show that a large number of quadratic Hamiltonians are strongly tentacular. 

Suppose $H$ is a non-degenerate quadratic Hamiltonian on $(\R^{2n},\omega_{0})$. Then there exists an affine symplectic change of coordinates such that $H$ can be written in the form
\begin{equation}
H(x):=Q(x,x)-c,\label{quadHam}
\end{equation}
where $Q$ is a non-degenerate quadratic form and $c\in \mathbb{R}$ is a constant.
In particular, for $c>0$ the hypersurface $\Sigma:=H^{-1}(0)$ is diffeomorphic to $S^{k-1}\times \R^{l}$, where $(k,l)$ is the signature of $Q$. In general, a $2n$-dimensional hyperboloid in $\R^{2n}$ can be described as $H^{-1}((-\infty,0))$ for a quadratic Hamiltonian of the form \eqref{quadHam} with $c=1$ and $k,l\geq 1$ (in other words, the quadratic form $Q$ is indefinite). We would like to investigate the equivalence classes of hyperboloids under the action of the linear symplectic group $Sp(\R^{2n})$ and find a unique quadratic form corresponding to each equivalence class under this action.

According to H{\"o}rmander's classification, $\mathbb{R}^{2n}$ splits into a direct sum of subspaces, orthogonal with respect to $Q$ and $\omega$, uniquely determined by the Jordan decomposition of $J_{0}Q$, of one of the following types:
\begin{enumerate}
\item[(a)\namedlabel{a}{(a)}] $S=T^{*}\mathbb{R}^{m}$ and there exists a constant $\lambda>0$ such that 
$$
Q(q,p)=2\lambda\sum_{j=1}^{m}q_{j}p_{j}+2\sum_{j=1}^{m-1}q_{j+1}p_{j}.
$$
Then the Jordan decomposition of $J_0 Q$ has one $m\times m$ box for each of the eigenvalues $\lambda$ and $-\lambda$. The signature of $Q$ is $(m,m)$.
\item[(b)\namedlabel{b}{(b)}] $S=T^{*}\mathbb{R}^{2m}$ and there exist constants $\lambda_{1},\lambda_{2}>0$ such that 
$$
Q(q,p)=2\left(\sum_{j=1}^{2m-2}q_{j}p_{j+2}+\lambda_{1}\sum_{j=1}^{2m}q_{j}p_{j}+\lambda_{2}\sum_{j=1}^{m}(q_{2j}p_{2j-1}-q_{2j-1}p_{2j})\right).
$$
The Jordan decomposition of $J_0 Q$ has one $m\times m$ box for each of the eigenvalues $\pm\lambda_{1}\pm i\lambda_{2}$. The signature of $Q$ is $(2m,2m)$.
\item[(c)\namedlabel{c}{(c)}] $S=T^{*}\mathbb{R}^{m}$ and there exist constants $\mu>0$, $\gamma=\pm 1$ such that 
$$
Q(q,p)=\gamma\left(\mu\sum_{j=1}^{m}q_{j}q_{m+1-j}
-\sum_{j=2}^{m}q_{j}q_{m+2-j}+\mu\sum_{j=1}^{m}p_{j}p_{m+1-j}-\sum_{j=1}^{m-1}p_{j}p_{m-j}\right).
$$
The Jordan decomposition of $J_0 Q$ has one $m\times m$ box for each of the eigenvalues $\pm i\mu$. The signature of $Q$ is $(m,m)$ if $m$ is even and $(m+\gamma,m-\gamma)$ when $m$ is odd.
\end{enumerate}
The number of spaces of each type in this decomposition is uniquely determined.

According to the H\"{o}rmander classification every equivalence class of the action of the linear symplectic group acting on the set of hyperboloids admits a quadratic representant $Q$ in the above form, that is, consisting of blocks of types (a), (b) or (c), which is unique up to a permutation of these blocks. In analogy with the case of positive definite quadratic forms and symplectic ellipsoids, we would like to call \emph{symplectic hyperboloid} a sublevel set of such an indefinite quadratic representant:
\[
W=\{x\in\mathbb{R}^{2n}\,:\, Q(x,x)<1\}.
\]
Our hope is that Rabinowitz Floer homology can eventually be applied to obtain a classification of symplectic hyperboloids, similar to the classification of symplectic ellipsoids obtained as an application of symplectic homology in \cite{FloerHoferWysocki}.

\begin{example}
Consider the Hamiltonian 
\[
H=\frac{1}{2}\left(p_1^2+\ldots +p_n^2+q_1^2+\ldots +q_k^2-q_{k+1}^2-\ldots -q_n^2 \right) -1.
\] 
In this case we get a splitting of $\R^{2n}$ into $n$ two-dimensional subspaces, orthogonal with respect to both the quadratic and the symplectic form:
\[
S_i=\{(p,q)\,:\, p_j=q_j=0\ \textrm{for all}\ j\neq i\}.
\]
These subspaces are of type $\ref{c}$ with $\gamma=1$ for $i=1,\ldots , k$ and they are of type $\ref{a}$ for $i=k+1,\ldots , n$.
\end{example}
\begin{remark}
Note that in the example above the corresponding quadratic representant is not diagonal, whereas the Hamiltonian $H$ obviously is. The criterion for symplectic diagonalisability is presented in \cite{DeLaCruz2014}, where it is shown that a diagonalizable matrix $A$ is symplectically diagonalizable if and only if it satisfies $AJ_0A^TJ_0=J_0A^TJ_0A$.
\end{remark}
\begin{remark}
In \cite{CieliebakEliashbergPolterovich2017}, the authors compute the symplectic homology of symplectic hyperboloids such as the ones in the example above, allowing for different coefficients of the positive and negative definite quadratic parts. These examples always decompose into $2$-dimensional subspaces of type $\ref{a}$ and type $\ref{c}$ (with $\gamma=1$). In particular, they are strongly tentacular, as we will show below.
\end{remark}
The next proposition can be applied in order to determine whether a given quadratic Hamiltonian is strongly tentacular, just on the basis of the orthogonal decomposition and the type of subspaces in H\"{o}rmander's classification. For instance, one immediate consequence will be that the Hamiltonian in the example above is in fact strongly tentacular, for any $1\leq k \leq n$. In turn, this implies that the Rabinowitz Floer Homology can be defined.
 
\begin{proposition}
Consider $(\R^{2n},\omega_{0})$ and a Hamiltonian $H$ defined by a non-degenerate quadratic form $Q$. Decompose $(\R^{2n},\omega_{0})$  into symplectic subspaces $S_i$ which are orthogonal with respect to $\omega_{0}$ and $Q$ and let $Q_i$ denote the restriction of $Q$ to $S_i$. Suppose the pair $(S_i, Q_i)$ belongs to one of the following types in H\"{o}rmander's classification:
\begin{enumerate}[label*=(\alph*)]
\item[(1)\namedlabel{1}{(1)}] type \ref{a} with $m=1$, or $m=2$ and $\lambda>\frac{1}{\sqrt{2}}$, or $m>2$ and $\lambda>2$;
\item[(2)\namedlabel{2}{(2)}] type \ref{b} with $m=1$, or $m=2$ and $\lambda_1>\frac{1}{\sqrt{2}}$, or $m>2$ and $\lambda_{1}>2$;
\item[(3)\namedlabel{3}{(3)}] type \ref{c} with $m=1$ and $\gamma=1$.
\end{enumerate}
Then $H$ is strongly tentacular.
\label{prop:quadTentHam}
\end{proposition}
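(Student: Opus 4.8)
The plan is to verify directly the four axioms of Definition \ref{tentH} for $H(x)=Q(x,x)-c$ with $c>0$. Two of them cost nothing: $H$ is quadratic, so $D^{3}H\equiv 0$ and \ref{h2} holds; and the radial field $X_{0}(x)=\tfrac12 x$ lies in $\scrL(\R^{2n})$ (its linearisation is $\tfrac12\,\id$) with $dH(X_{0})(x)=\langle\nabla H(x),\tfrac12 x\rangle=Q(x,x)=c>0$ for all $x\in\Sigma=H^{-1}(0)$, so \ref{h3} holds for any non-degenerate $Q$. Thus the whole content is in \ref{h1} and \ref{h4}; and by Lemma \ref{lem:PO+} it suffices, for \ref{h4}, to produce a coercive function $F$ near $\Sigma$ for which $K_{0}=\{x\in\Sigma\ :\ \{H,F\}(x)=0,\ \{H,\{H,F\}\}(x)\le 0\}$ is compact.

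Both remaining properties are assembled blockwise along the H\"ormander decomposition $\R^{2n}=\bigoplus_{i}S_{i}$. Since the $S_{i}$ are orthogonal for both $\omega_{0}$ and $Q$, we have $H=\bigl(\sum_{i}Q_{i}(\cdot,\cdot)\bigr)-c$ with $Q_{i}=Q|_{S_{i}}$, the symplectic form is the direct sum, and Poisson brackets of functions on distinct blocks vanish. Hence: if on each block we find an asymptotically regular Liouville field $X_{i}^{\dagger}$ on $(S_{i},\omega_{0}|_{S_{i}})$ with $dQ_{i}(X_{i}^{\dagger})$ a positive definite quadratic form, then $X^{\dagger}:=\bigoplus_{i}X_{i}^{\dagger}$ (Liouville, since $X_{0}$ is Liouville and the correction is symplectic) satisfies \ref{h1}. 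For \ref{h4} we take $F(x)=|x|^{2}=\sum_{i}|x_{i}|^{2}$, so that $\{H,F\}=\sum_{i}\{Q_{i},|x_{i}|^{2}\}$ and $\{H,\{H,F\}\}=\sum_{i}g_{i}$ with $g_{i}:=\{Q_{i},\{Q_{i},|x_{i}|^{2}\}\}$. It then suffices to show that $g_{i}$ is positive definite on each indefinite block (types \ref{a}, \ref{b}) and vanishes identically on the remaining blocks (type \ref{c} with $m=1$, $\gamma=1$), where instead $Q_{i}$ is positive definite: for then $\{H,\{H,F\}\}\le 0$ confines $x$ to the sum of the type-\ref{c} blocks, on which $Q$ is positive definite, so $K_{0}\subseteq\{\,\sum_{\text{type }\ref{c}}Q_{i}(x_{i})=c\,\}$ is a sphere, hence compact (or empty).

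It remains to run through the listed types. A type-\ref{c} block with $m=1$, $\gamma=1$ has $Q_{i}=\mu|x_{i}|^{2}$, so $X_{i}^{\dagger}=X_{0}|_{S_{i}}$ works for \ref{h1} and $\{Q_{i},|x_{i}|^{2}\}\equiv 0$, whence $g_{i}\equiv 0$. For a block of type \ref{a} or \ref{b} the matrix $J_{0}Q_{i}$ is hyperbolic, its eigenvalues having real parts $\pm\lambda$ (resp.\ $\pm\lambda_{1}$), and we seek $X_{i}^{\dagger}=X_{0}|_{S_{i}}+X_{G_{i}}$ with $G_{i}$ a quadratic form on $S_{i}$ (so that $X_{G_{i}}$ is linear with bounded linearisation and $X_{i}^{\dagger}$ is asymptotically regular), chosen so that $dQ_{i}(X_{i}^{\dagger})=Q_{i}(\cdot,\cdot)+\{G_{i},Q_{i}\}(\cdot)\succ 0$. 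For $m=1$ this succeeds for every $\lambda>0$ (resp.\ $\lambda_{1}>0$): for type \ref{a}, the choice $G_{i}=\tfrac{b}{2}(q^{2}-p^{2})$ with $b>\tfrac12$ gives $dQ_{i}(X_{i}^{\dagger})=2\lambda qp+2\lambda b(q^{2}+p^{2})\succ 0$, and type \ref{b} with $m=1$ is treated identically. For $m\ge 2$ the off-diagonal ``shift'' part of $Q_{i}$ (the terms $2\sum q_{j+1}p_{j}$, resp.\ $2\sum q_{j}p_{j+2}$) is a lower-order perturbation of the diagonal part, and a block-diagonal choice of $G_{i}$ together with an estimate of the cross terms keeps $dQ_{i}(X_{i}^{\dagger})$ positive definite as soon as $\lambda$ (resp.\ $\lambda_{1}$) is above the threshold in \ref{1}--\ref{2}. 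Finally, on types \ref{a} and \ref{b} every term of $Q_{i}$ is a product $q_{a}p_{b}$, which forces $g_{i}(q,p)$ to split as (a form in the $q$-variables) $+$ (a form in the $p$-variables); a direct computation shows that both summands are positive definite under the same thresholds — e.g.\ for $m=2$, type \ref{a}, the relevant $2\times 2$ determinant equals $64\lambda^{2}(4\lambda^{2}-2)$, which is positive exactly when $\lambda>1/\sqrt2$, i.e.\ condition \ref{1}. This completes \ref{h4}, and with it the proof.

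The heart of the matter is the $m\ge 2$ case: one must pick the perturbing form $G_{i}$ cleverly and control how the nilpotent shift part of $Q_{i}$ interacts with the diagonal (and, for type \ref{b}, rotational) part — this is exactly what fixes the constants $1/\sqrt2$ and $2$ in the statement, the bound $\lambda>2$ for $m>2$ being a convenient sufficient condition rather than the sharp one. Everything else is bookkeeping with the orthogonal decomposition and the elementary verifications above.
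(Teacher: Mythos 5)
Your proposal tracks the paper's proof quite closely in overall strategy: reduce to $H=Q(\cdot,\cdot)-c$ by a linear symplectic change of coordinates (the paper establishes the required invariance of the strongly tentacular class as its Step~1), construct the Liouville field blockwise by adding a quadratic Hamiltonian perturbation $X_G$ to the radial field $X_0$, and verify \ref{h4} with the radial function $F$ by showing $\{H,\{H,F\}\}$ is a positive definite quadratic form on type-\ref{a} and \ref{b} blocks and vanishes on the type-\ref{c} blocks. Two of your observations are slightly cleaner than the paper's wording: that \ref{h3} for the plain radial field is immediate when $c>0$ (though the paper's $X^\dagger$ gives $dH(X^\dagger)$ positive definite, which settles \ref{h3} regardless of the sign of $c$), and that once $g_i\succ 0$ on indefinite blocks and $g_i\equiv 0$ on definite blocks, the closed set $K_0$ is pinned to a sphere in the definite part --- a more geometric finish than the paper's $\varepsilon$-bootstrap inequality $\{H,\{H,F\}\}+\varepsilon H\ge\varepsilon'|x|^2-\varepsilon c$. (The citation of Lemma~\ref{lem:PO+} is unnecessary, though: compactness of $K_0$ is literally the content of \ref{h4}.)

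But what you yourself call ``the heart of the matter'' is exactly where the proof is not given. For $m\ge 2$ you assert the existence of a block-diagonal $G_i$ making $dQ_i(X_i^\dagger)$ positive definite and a ``direct computation'' showing $g_i\succ 0$ under the stated thresholds, but these are claims, not proofs --- and the shift part of $Q_i$ is \emph{not} lower order (it is degree two, same as the diagonal part), so a genuine quantitative estimate is needed. The paper carries this out concretely: for \ref{h1}/\ref{h3} it uses the one-parameter Liouville field $X^\alpha(q,p)=\sum_j\bigl((\tfrac12 p_j+\alpha q_j)\partial_{p_j}+(\tfrac12 q_j+\alpha p_j)\partial_{q_j}\bigr)$ and a square-completion bound giving $dH_i(X^\alpha)\ge c_i|x_i|^2$ with $c_i>0$ for $\lambda>\tfrac12$ when $m=2$ and $\lambda>1$ when $m>2$; for \ref{h4} it writes $\{H_i,\{H_i,F_i\}\}$ via the matrix $\overline B_i=A_i^2+\tfrac12\bigl((J_0A_i)^2+((J_0A_i)^2)^T\bigr)$, shows that in a permuted basis $\overline B_i$ splits into identical $m\times m$ blocks $B$ given explicitly in \eqref{B}, and checks positive definiteness of $B$ by an eigenvalue computation for $m=2$ (yielding $\lambda>1/\sqrt 2$) and by strict diagonal dominance via Levy--Desplanques for $m>2$ (yielding the sufficient bound $\lambda>2$). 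Your $m=2$ spot-check matches in sign, but the general verification --- which is what actually produces the constants in the statement --- is missing from your argument.
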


\begin{proof}
\textbf{Step 1: } We will start by proving that the set of strongly tentacular Hamiltonians is invariant under linear symplectic changes of coordinates.
Suppose $H$ is strongly tentacular and $\phi:\R^{2n}\to\R^{2n}$ is a linear symplectic transformation. Observe that if $X$ is an asymptotically regular Liouville vector field, then $\phi^{*}X$ is too, since $\phi^{*}\omega_0=\omega_0$. As a result, axioms \ref{h1}--\ref{h3} are satisfied for $H\circ \phi$. Analogously, if $F$ is a coercive function, then $F\circ \phi$ is also coercive, since the sublevel sets $(F\circ \phi)^{-1}((-\infty, a])=\phi^{-1}(F^{-1}((-\infty,a])$ are compact for all $a\in \R$. Moreover, $\{ H\circ \phi, F\circ \phi\}(x)= \{H, F\} (\phi(x))$ and $\{H\circ \phi, \{ H\circ \phi, F\circ \phi\}\}(x)= \{H,\{H, F\}\} (\phi(x))$, hence $H\circ \phi$ satisfies also axiom \ref{h4}.

\textbf{Step 2: } It follows from step 1 that, without loss of generality, we can assume $H(x):=\frac{1}{2}Q(x,x)-c$. We prove that if a Hamiltonian of this form satisfies the assumptions of the proposition, then it satisfies hypotheses \ref{h1} and \ref{h3}.

By assumption we can write the Hamiltonian as $H(x)=\frac{1}{2}\sum_i Q_i(x_{i},x_{i})-c$, where $x_i\in S_i$ and $(S_i, Q_i)$ is of type \ref{1}, \ref{2} or \ref{3}. Suppose that for every $i$ the Hamiltonian $H_i(x_i):=\frac{1}{2}Q_i(x_{i},x_{i})-c$ satisfies the hypotheses $dH_i(X^i)(x_i)\geq c_i |x_i|^2$ for $x_i\in S_i$, with $X^i$ an asymptotically regular Liouville vector field and a constant $c_i>0$. Then $X^\dagger:=\sum_i X^i (x_i)$ is an asymptotically regular Liouville vector field on $T^* \R^m$, such that $dH_x(X^\dagger)=\sum dH_i(X^i)(x_i)\geq \min_i c_i |x|^2$. In other words, $H=\sum_i H_i$ satisfies hypotheses \ref{h1} and \ref{h3}.

Therefore it suffices to prove hypotheses \ref{h1} and \ref{h3} for Hamiltonians $H_i$ in each of the three distinct cases \ref{1}, \ref{2} and \ref{3}. Our proof of existence will be in fact a constructive proof. In the standard coordinates on $(T^{*}\mathbb{R}^{m},\omega_{0})$, for each $\alpha \in \R$ we define the vector field
$$
X^{\alpha}(q,p) = \sum_{i=1}^{m}\left(\left(\tfrac{1}{2}p_{i}+\alpha q_{i}\right)\partial_{p_{i}} + \left(\tfrac{1}{2}q_{i}+\alpha p_{i}\right)\partial_{q_{i}}\right).
$$
A straightforward calculation shows that $X^\alpha$ is an asymptotically regular Liouville vector field for every $\alpha\in \R$.

\textbf{Case \ref{1}:} Let $S_i=T^*\R^m$ and $Q_i$ be of class \ref{a} with coefficient $\lambda>0$. Then 
\begin{gather*}
dH_i(X^{\alpha})(q,p) =\lambda\sum_{j=1}^{m}(\alpha q_{j}^{2}+ q_{j}p_{j}+\alpha p_{j}^{2})+\sum_{j=1}^{m-1}(p_{j} q_{j+1}+\alpha p_{j}p_{j+1}+\alpha q_{j}q_{j+1})\geq c_i\sum_{j=1}^{m}(q_{j}^{2}+p_{j}^{2}),\\
\textrm{with} \quad c_i  := \left\lbrace\begin{array}{c c}
 \lambda(\alpha-\frac{1}{2}), & m=1,\\
\alpha\left(\lambda-\frac{1}{2}\right)-\frac{1}{2}\left(\lambda+1\right), & m=2,\\
\alpha\left(\lambda-1\right)-\frac{1}{2}\left(\lambda+1\right), & m>2.
\end{array}\right.
\end{gather*}
The last inequality is obtained by square completion. We can see that for $m=1$, the constant $c_i$ is positive provided $\alpha>\frac{1}{2}$; for $m=2$ the constant $c_i$ is positive, provided $\lambda>\frac{1}{2}$ and $\alpha>\frac{\lambda+1}{2\lambda-1}$; finally, for $m>2$, the constant $c_i$ is positive, provided $\lambda>1$ and $\alpha>\frac{\lambda+1}{2(\lambda-1)}$. Therefore, in all cases, $H_i$ satisfies hypotheses \ref{h1} and \ref{h3} on $S_i$.

\textbf{Case \ref{2}:} Let $S_i=T^*\R^{2m}$ and $Q_i$ be of class \ref{b} with coefficients $\lambda_1,\lambda_2>0$. In this case, if we calculate $dH_i(X^{\alpha})$, we obtain:
\begin{align*}
dH_i(X^{a})(q,p) & = \lambda_{1}\sum_{j=1}^{2m}(\alpha q_{j}^{2}+q_{j}p_{j}+\alpha p_{j}^{2})+\lambda_{2}\sum_{j=1}^{m}(p_{2j-1}q_{2j}+q_{2j-1}p_{2j})\\
& \hspace{2.1cm}+ \sum_{j=1}^{2m-2}(p_{j+2}q_{j}+\alpha(p_{j+2}p_{j}+q_{j}q_{j+2}))\geq c_i\sum_{j=1}^{2m}(q_{j}^{2}+p_{j}^{2}),\\
\textrm{with} \quad c_i &:=  \left\lbrace\begin{array}{c c}
\alpha\lambda_1 - \frac{1}{2}(\lambda_{1}+\lambda_{2}), & m=1,\\
\alpha\left(\lambda_{1}-\frac{1}{2}\right)-\frac{1}{2}(\lambda_{1}+\lambda_{2}+1), & m=2,\\
\alpha(\lambda_{1}-1)-\frac{1}{2}(\lambda_{1}+\lambda_{2}+1), & m>2.
\end{array} \right.
\end{align*}
We can see that for $m=1$, the constant $c_i$ is positive provided $\alpha>\frac{\lambda_{1}+\lambda_{2}}{2 \lambda_1}$; for $m=2$ the constant $c_i$ is positive, provided $\lambda_1>\frac{1}{2}$ and $\alpha>\frac{\lambda_1+\lambda_2+1}{2\lambda_1-1}$; finally, for $m>2$ the constant $c_i$ is positive provided $\lambda_1>1$ and $\alpha>\frac{\lambda_{1}+\lambda_{2}+1}{2 (\lambda_1-1)}$, which in turn shows that $H_i$ satisfies hypotheses \ref{h1} and \ref{h3}.

 \textbf{Case \ref{3}:} Let $S_i=T^*\R$ and $Q_i$ be of class \ref{c} with coefficients $\mu>0$ and $\gamma=1$. Then the associated Hamiltonian $H_i$ is of the form $H_i(q_1,p_1)=\frac{\mu}{2}(p_1^2+q_1^2)$. A straightforward calculation shows that $dH_i(X^0)=\mu (p_1^2+q_1^2)$, which proves the claim in this last case.\\

\textbf{Step 3: } In this last step we will prove that the Hamiltonian $H$ satisfies \ref{h4}.

For every $i$, let $F_i(x_i):=\frac{1}{2}|x_i|^2$ and assume that for some constants $c_i,c'_i>0$ we have
\begin{eqnarray}
\forall\ i\in I\ & \{H_{i},\{H_{i},F_i\}\}(x_i) \geq c_i|x_i|^2
 &\textrm{and} \qquad
|H_i(x_i)|\leq c'_i (|x_i|^2+1),\label{HHF1}\\
\forall\ i\in I' & \{H_{i},\{H_{i},F_i\}\}(x_i) =0
\qquad &\textrm{and} \qquad
H_i(x_i)\geq c_i |x_i|^2.\label{HHF2}
\end{eqnarray}
Now we take $\varepsilon>0$ and with $F(x):= \frac{1}{2}|x|^2$ we calculate
\begin{align*}
\{H,\{H,F\}\}(x)+ \varepsilon H(x) & = \sum_{i\in I\cup I'}\left( \{H_{i},\{H_{i},F_i\}\}(x_i) +\varepsilon H_i(x_i)\right)-\varepsilon c\\
& \geq \sum_{i\in I}\left( c_i -\varepsilon c'_i\right)|x_i|^2 + \varepsilon\sum_{i\in I}c_i |x_i|^2 -\varepsilon c\,.
\end{align*}
In particular, if we take $\varepsilon$ small enough then we get the estimate
$$
\{H,\{H,F\}\}(x)+ \varepsilon H(x)\geq \varepsilon'|x|^2 -\varepsilon c,
$$
which brings us to the conclusion, that the function above is positive outside a compact set. As a result, for $x\in H^{-1}(0)$ the function $\{H,\{H,F\}\}(x)$ is positive outside a compact set, proving hypothesis \ref{h4}. Therefore, to prove hypothesis \ref{h4} for $H$ it suffices to show that each $H_i$ of type \ref{1}, \ref{2} or \ref{3} satisfies either \eqref{HHF1} or \eqref{HHF2}. 

Let us introduce the following notation: Let $J_0$ be the standard almost complex structure on $T^*\R^m$ and let $A$ be a real, symmetric, $2m \times 2m$ matrix, such that $H(x)=\frac{1}{2} Q(x,x)-c=\frac{1}{2} \langle x, A x\rangle -c$. Then for the radial function $F$ the corresponding Poisson brackets are given by
\begin{align*}
\{H, F\}(x) & := \langle x, \tfrac{1}{2}(J_0 A +(J_0 A)^T)x\rangle, \\
\{H,\{H, F\}\}(x) & := \langle x, (A^2+\tfrac{1}{2}((J_0 A)^2 +((J_0 A)^2)^T))x\rangle.
\end{align*}
In particular, $\{H,\{H, F\}\}(x)>0$ for all $x\neq 0$, whenever the matrix $A^2+\tfrac{1}{2}((J_0 A)^2 +((J_0 A)^2)^T)$ is positive definite. We define 
\begin{equation}
\overline{B}:=A^2+\tfrac{1}{2}((J_0 A)^2 +((J_0 A)^2)^T).
\label{ovB}
\end{equation}
Our goal now is to verify in every case when the corresponding matrix $\overline{B}$ is positive definite. Note, that for every orthonormal matrix $O$ the matrices $\overline{B}$ and $O^T\overline{B}O$ have the same eigenvalues. In particular, permutations of the coordinates in which we write $\overline{B}$ correspond to multiplying $\overline{B}$ by an orthonormal matrix $O$ to obtain $O^T\overline{B}O$ and thus it does not change its eigenvalues. Therefore, in order to verify if $\overline{B}$ is positive definite, we can permute the coordinates in the base, even if the coordinate change is not symplectic.

\textbf{Case \ref{1}:} 
Let $S_i=T^*\R^{m}$, let $Q_i$ be of class \ref{a} with coefficient $\lambda>0$, and let $A_i$ be the matrix corresponding to $Q_i$.
Let $\overline{B}_i$ be as in \eqref{ovB}. If we write $\overline{B}_i$ in the coordinate base $q_1,\dots q_m,p_m,\dots p_1$, then $\overline{B}_i$ is a block matrix with two identical $m$ by $m$ matrices $B$ on the diagonal and $0$ elsewhere, where $B=\{b_{l,k}\}_{l,k=1}^{m}$ is defined
\begin{equation}
\label{B}
b_{l,k}:= \left\lbrace\begin{array}{c l}
2\lambda^2, &\quad l=k=1,\\
1+ 2\lambda^2, &\quad l=k= 2, \dots m,\\
2\lambda, &\quad |k-l|=1,\\
\frac{1}{2}, &\quad |k-l|=2,\\
0, &\qquad \textrm{otherwise}.
\end{array}\right.
\end{equation}
Naturally, $\overline{B}_i$ is positive definite if and only if $B$ is positive definite. 

In particular, for $m=1$ the corresponding matrix $\overline{B}_i$ has just one entry equal to $2\lambda^2$, hence it is always positive definite. For $m=2$ the associated matrix $B$ has eigenvalues 
$\frac{1}{2} \left(1+4 \lambda ^2\pm\sqrt{16 \lambda ^2+1}\right)$, thus it is positive definite for $\lambda>\frac{1}{\sqrt{2}}$. Finally, for $m>2$ the corresponding matrix $B$ is diagonally dominant provided $\lambda>2$, hence by a theorem of Levy-–Desplanques (cf. \cite{levy1881} and \cite{desplanques1887}), $B$ is in this case positive definite. We can conclude that for in all of the above cases \eqref{HHF1} is satisfied.

\textbf{Case \ref{2}:} Let $S_i=T^*\R^{2m}$, let $Q_i$ be of class \ref{b} with coefficients $\lambda_1,\lambda_2>0$, and let $A_i$ be the matrix corresponding to $Q_i$.
Let $\overline{B}_i$ be as in \eqref{ovB}. If we write $\overline{B}_i$ in the coordinate base $p_1,\dots p_{2m-1},q_{2m-1},\dots q_1,p_2,\dots p_{2m},q_{2m},\dots q_2$, then in this base the corresponding $\overline{B}_i$ is a block matrix with four identical $m$ by $m$ matrices $B$ on the diagonal and $0$ elsewhere, where $B$ is the matrix from \eqref{B} with parameter $\lambda_1$ instead of $\lambda$.
Naturally, $\overline{B}_i$ is positive definite if and only if $B$ is positive definite. By the arguments presented above $\overline{B}_i$ for $m=1$ is always positive definite, for $m=2$ it is positive definite whenever $\lambda_1>\frac{1}{\sqrt{2}}$ and it is also positive definite for $m>2$ and $\lambda_1>2$.

We can conclude that in all of the above cases \eqref{HHF1} is satisfied.

\textbf{Case \ref{3}:} Let $S_i=T^*\R$ and $Q_i$ be of class \ref{c} with coefficients $\mu>0$ and $\gamma=1$. Then the associated Hamiltonian $H_i$ is of the form $H_i(q_1,p_1)=\frac{\mu}{2}(p_1^2+q_1^2)$, hence it is always positive accept at $0$. On the other hand the associated matrix $\overline{B}_i=0$. We can conclude that in this case \eqref{HHF2} is satisfied.
\end{proof}
\begin{remark}
It follows from Proposition \ref{prop:quadTentHam} that Rabinowitz Floer homology is well-defined for the Hamiltonians corresponding to one of the three types in the statement of the proposition. Moreover, in view of Corollary \ref{cor:SympTentRFH} and Lemma \ref{lem:SympTen}, it will also be well-defined for the composition of such Hamiltonians with any symplectomorphism. Finally, by invariance the homology will also be well-defined for any sufficiently small compactly supported perturbation of any of the previous Hamiltonians. 
\end{remark}
\section*{Acknowledgment}
We would like to thank Will Merry, Bente Bakker and Paul Biran for fruitful discussions on this subject. We would also like to thank Alexander Fauck for sharing with us the preliminary version of his PhD thesis. We would like to thank the anonymous referees for carefully reading our manuscript, and for providing useful and detailed comments on it.

This work has been supported by the NWO Grant 613.001.111 \emph{Periodic motions on non-compact energy surfaces}. The third author has also been supported by the SNF Grant 200021\_182564 \emph{Periodic orbits on non-compact hypersurfaces}.
\bibliographystyle{plain}
\bibliography{RabFloer}

\end{document}